%
%
\documentclass[12pt]{amsart}
\usepackage{amssymb, colordvi}
\usepackage{multirow,ulem}
\usepackage{graphicx}

%
%
\headheight=8pt       \topmargin=-10pt
\textheight=611pt     \textwidth=456pt
\oddsidemargin=6pt   \evensidemargin=6pt

\newtheorem*{VC}{Vakil's Criteria}
\numberwithin{equation}{section}
\newtheorem{theorem}{Theorem}
\newtheorem{proposition}[theorem]{Proposition}
\newtheorem{lemma}[theorem]{Lemma}

\theoremstyle{definition}
\newtheorem{definition}[theorem]{Definition}

\theoremstyle{definition}

\newtheorem{remark}[theorem]{Remark}

\newcounter{FNC}[page]
\def\fauxfootnote#1{{\addtocounter{FNC}{2}$^\fnsymbol{FNC}$%
     \let\thefootnote\relax\footnotetext{$^\fnsymbol{FNC}$#1}}}

\newcommand{\C}{{\mathbb{C}}}
\newcommand{\K}{{\mathbb{K}}}

\renewcommand{\P}{{\mathbb{P}}}

\DeclareMathOperator{\rk}{rk}

\newcommand{\Fdot}{F_{\bullet}}
\newcommand{\Fpdot}{F'_{\bullet}}
\newcommand{\Edot}{E_{\bullet}}
\newcommand{\Gdot}{G_{\bullet}}
\newcommand{\adot}{a_{\bullet}}
\newcommand{\bdot}{b_{\bullet}}
\newcommand{\hdot}{h_{\bullet}}
\newcommand{\eldot}{\ell_{\bullet}}
\newcommand{\Lamdot}{\Lambda_{\bullet}}

\newcommand{\calC}{{\mathcal{C}}}
\newcommand{\calF}{{\mathcal{F}}}
\newcommand{\calG}{{\mathcal{G}}}
\newcommand{\calO}{{\mathcal{O}}}
\newcommand{\calU}{{\mathcal{U}}}
\newcommand{\calW}{{\mathcal{W}}}
\newcommand{\calX}{{\mathcal{X}}}
\newcommand{\calY}{{\mathcal{Y}}}
\newcommand{\calZ}{{\mathcal{Z}}}

\newcommand{\Fl}{{\mathbb{F}\ell}}
\newcommand{\Gr}{\mbox{\rm Gr}}
\newcommand{\pr}{\mbox{\rm pr}}

\newcommand{\codim}{\mbox{\rm codim}}
\newcommand{\blambda}{{\boldsymbol{\lambda}}}
\newcommand{\bmu}{{\boldsymbol{\mu}}}
\newcommand{\bnu}{{\boldsymbol{\nu}}}
\newcommand{\bx}{{\boldsymbol{x}}}
\newcommand{\bS}{{\boldsymbol{S}}}

\newcommand{\lhra}{\ensuremath{\lhook\joinrel\relbar\joinrel\relbar\joinrel\rightarrow}}

\DeclareRobustCommand{\sI}{\includegraphics{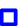}}
\newcommand{\sT}{\includegraphics{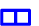}}
\DeclareRobustCommand{\sTh}{\includegraphics{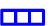}}
\newcommand{\sII}{\includegraphics{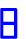}}
\DeclareRobustCommand{\sIII}{\includegraphics{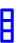}}

\newcommand{\sTI}{\includegraphics{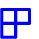}}
\newcommand{\sTII}{\includegraphics{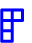}}
\DeclareRobustCommand{\ssTT}{\includegraphics{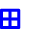}}
\DeclareRobustCommand{\sTT}{\includegraphics{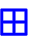}}
\newcommand{\sThI}{\includegraphics{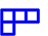}}

\newcommand{\sTTT}{\includegraphics{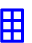}}
\newcommand{\sThTI}{\includegraphics{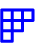}}
\newcommand{\sThTh}{\includegraphics{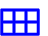}}
\newcommand{\sThTT}{\includegraphics{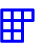}}
\newcommand{\sThThTh}{\includegraphics{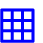}}

\renewcommand{\qed}{\hfill\raisebox{-1.5pt}{\includegraphics[height=9pt]{pictures/s333.eps}}}
\newcommand{\QED}{\hfill\qed}

\newcommand{\defcolor}[1]{\Blue{#1}}
\newcommand{\demph}[1]{\defcolor{{\sl #1}}}

\title[Double transitivity of Galois Groups in Schubert Calculus]{Double transitivity of Galois Groups in Schubert Calculus of Grassmannians}

\author{Frank Sottile}
\address{Frank Sottile \\ Department of Mathematics\\
         Texas A\&M University\\
         College Station\\
         Texas \ 77843\\
         USA}
\email{sottile@math.tamu.edu}
\urladdr{http://www.math.tamu.edu/\~{}sottile/}
\author{Jacob White}
\address{Jacob A. White \\ Department of Mathematics\\
         Texas A\&M University\\
         College Station\\
         Texas \ 77843\\
         USA}
\email{jwhite@math.tamu.edu}
\urladdr{http://www.math.tamu.edu/\~{}jwhite/}

\subjclass{14N15}

\thanks{Research supported in part by NSF grants DMS-0915211 and DMS-1001615.}
\thanks{This material is based upon work supported by the National Science 
Foundation under Grant No. 0932078 000, while Sottile was in 
residence at the Mathematical Science Research Institute (MSRI) in 
Berkeley, California, during the winter semester of 2013.}
\keywords{Schubert problem, Grassmannian, Galois group, double transitivity}
\begin{document}

\begin{abstract}
 We investigate double transitivity of Galois groups in the classical Schubert calculus on
 Grassmannians.
 We show that all Schubert problems on Grassmannians of 2- and 3-planes have doubly
 transitive Galois groups, as do all Schubert problems involving only special Schubert
 conditions.
 We use these results to give a new proof that Schubert problems on Grassmannians of 2-planes
 have Galois groups that contain the alternating group. 
We also investigate the Galois group of every Schubert problem on $\Gr(4,8)$,
   finding that each Galois group either contains the alternating group or  is an 
   imprimitive permutation group and therefore fails to be doubly transitive.
 These imprimitive examples show that our results are the best possible general results on
 double transitivity of Schubert problems.
\end{abstract}
\maketitle

%
\section*{Introduction}
Galois groups are not only symmetry groups of field extensions, they are 
also symmetry groups in enumerative geometry.
This second point was made by Jordan in 1870~\cite{J1870} who studied 
some classical problems in enumerative geometry, showing that several had
Galois groups which were not the full symmetric group, reflecting the intrinsic structure
of these problems.
Earlier, Hermite gave a different connection to geometry, showing that the algebraic Galois
group coincided with a geometric monodromy group~\cite{Hermite}.
Harris used this to study the Galois group of several
problems in enumerative geometry~\cite{Ha79}.
For each he showed that their monodromy groups were the full symmetric groups on their sets
of solutions and therefore the problem had no intrinsic structure.

The Schubert calculus is a well-understood family of problems in enumerative geometry that
involve linear subspaces having prescribed positions with respect to other, fixed linear
spaces.
It provides a laboratory for studying Galois groups in enumerative geometry.
For example, Leykin and Sottile~\cite{LS09} directly computed monodromy for 
many Schubert problems on small Grassmannians involving simple (codimension one) Schubert
conditions and found that each problem had monodromy the full symmetric group.
In~\cite{Va06b}, Vakil gave a general combinatorial method based on the principle of
specialization and group theory for showing that a problem in enumerative geometry has
at least alternating Galois group in that its Galois/monodromy group
contains the alternating group. 
With this method and his geometric Littlewood-Richardson
rule~\cite{Va06a}, Vakil showed that many Schubert problems on small Grassmannians had
at least alternating Galois groups.
He also found Schubert problems whose Galois groups are not the full symmetric group.
Brooks, et al.~\cite{BdCS} used Vakil's combinatorial criterion and some delicate
estimates of integrals to show that all Schubert problems on Grassmannians of 2-planes 
have at least alternating Galois groups.

Vakil gave another, stronger, criterion for showing that a Galois group was at least
alternating which requires knowing that it is a doubly transitive
permutation group.
We study double transitivity of Galois groups of Schubert problems on
Grassmannians.
Vakil's stronger criterion is not our only motivation.
There appears to be a siginificant gap in transitivity: Every Galois group that we
  have studied is either
at least alternating and therefore highly transitive, or else it
fails to be doubly transitive and is imprimitive in that it preserves a partition of the
solutions. 
We conjecture that a Galois group of a Schubert problem is either the full symmetric group on
its solutions, or it is imprimitive.
One purpose of this paper is to give theoretical and computational evidence
supporting this conjecture.

A Schubert condition on $k$-planes in $n$-space is \demph{special} if the condition is
that the $k$-plane meets an $l$-plane nontrivially with $k{+}l\leq n$.
A Schubert problem is \demph{special} if it only has special conditions.
We state our main results.\medskip

\noindent{\bf Theorem.}
 {\it We have the following:
\begin{enumerate}
 \item Every special Schubert problem has a doubly transitive Galois group.
 \item Every Schubert problem in $\Gr(2,n)$ has at least alternating Galois group.
 \item Every Schubert problem in $\Gr(3,n)$ has a doubly transitive Galois group.
 \item There are exactly fourteen Schubert problems in $\Gr(4,8)$ whose Galois groups are
        not at least alternating. 
        For each, the Galois group is imprimitive.
\end{enumerate} 
}\medskip

Part (2) was proven in~\cite{BdCS}. 
Using Part (1) and Vakil's stronger criterion, we 
give a significantly simpler proof of that result. 
This approach does not generalize to show that all 
Schubert problems involving 3-planes have at least
alternating monodromy, for that requires a significantly more delicate analysis of 
geometric problems involving Vakil's checkerboard varieties~\cite{Va06a}.

By part  (4),
our results on double transitivity cannot be extended to Grassmannians of 4-planes. 
One of the fourteen Schubert problems with imprimitive Galois group was first described in
 \S 3.13 of ~\cite{Va06b}, and is due to Derksen.
We determine exactly the Schubert problems on this
Grassmannian whose Galois group is not at least alternating. 
In each case, we compute the Galois group, and demonstrate that the resulting groups are
imprimitive. 
In addition to Derksen's example, there are essentially two others
which generalize to give two families of Schubert problems with imprimitive monodromy.
One family contains a problem in which all but two of its
Schubert conditions are special and the remaining two conditions are dual special
Schubert conditions. 
Hence our results on double transitivity cannot be further extended to simple
general statements. 

This paper is organized as follows.
In Section~\ref{S:MG}, we provide some background on Galois/monodromy groups, and 
explain Vakil's work, including his criteria.
In Section~\ref{S:SC} we give basic definitions in the Schubert calculus on
Grassmannians.
In Section~\ref{S:DT} we discuss how to show double transitivity of Galois groups using
geometry and prove some geometric lemmas.
In Section~\ref{S:MR} we establish
Parts (1) and (3) of our main theorem about special Schubert problems
and Schubert problems involving 3-planes.
In Section~\ref{S:G2n}, we use the double transitivity of special Schubert problems to 
prove Part (2).
We close with Section~\ref{S:IM}, 
in which we prove Part (4) and study the Galois group of every Schubert problem on
  $\Gr(4,8)$.

%
\section{Galois/Monodromy Groups}
\label{S:MG}
We provide some background on Galois/monodromy groups in enumerative geometry. 
More information can be found in~\cite{Va06b}.
We work over an algebraically closed field $\K$ of arbitrary characteristic.
Suppose that $f\colon X\to Y$ is a proper, generically finite and separable (i.e.\
generically \'etale)  
morphism of $\K$-schemes of degree $r$, and $Y$ is irreducible.
Let $\defcolor{X^r}$ be the Zariski closure in the $r$-fold fiber product of the scheme
\[
   \overbrace{X\times_Y X\times_Y \dotsb \times_Y X}^{r}\, \setminus\, \Delta\,,
\]
where $\Delta$ is the big diagonal.
Let $y\in Y(\K)$ be a closed point in the open set over which $f$ is finite and separable
such that the fiber $X_y$ consists of $r$ reduced points.
(Such a point $y$ is a \demph{regular value} of $f$.)
Choose an ordering $\{x_1,\dotsc,x_r\}$ of the points of $X_y$.
Then the \demph{Galois/monodromy group $\calG(X\to Y)$} of $f\colon X\to Y$ is the
subgroup of the symmetric group \defcolor{$S_r$} of permutations of $[r]$
consisting of those permutations $\sigma$ such that the points $(x_1,\dotsc,x_r)$ and  
$(x_{\sigma(1)},x_{\sigma(2)},\dotsc,x_{\sigma(r)})$ in the fiber above $y$ are in the
same irreducible component of $X^r$.
Write \defcolor{$X^r_{\bx}$}, where $\bx:=(x_1,\dotsc,x_r)$, for the component of $X^r$
containing the point $\bx$. 
The group $\calG(X\to Y)$ is well-defined as a subgroup of $S_r$, up to conjugacy.

%
\subsection{Transitivity}
Let $0<t\leq r$ be an integer.
A permutation group $G\subset S_r$ is \demph{$t$-transitive} if any two ordered sets
of $t$ distinct numbers are mapped onto each other by an element of $G$.
We interpret this geometrically for Galois/monodromy groups.
Define $\defcolor{X^t}$ to be the Zariski closure of the scheme
\[
   \overbrace{X\times_Y X\times_Y \dotsb \times_Y X}^{t}\, \setminus\, \Delta\,,
\]
where $\Delta$ is the big diagonal as before.
Let \defcolor{$X^{(t)}$} be the union of
the irreducible components of $X^t$ that map dominantly onto $Y$.
Each component of $X^{(t)}$ has the same dimension as $Y$ and its projection to $Y$ has
finite fibers over the dense subset of regular values of $f$.
The fiber of $X^{(t)}$ over the point $y$ consists of all $t$-tuples $(x'_1,\dotsc,x'_t)$
where $x'_1,\dotsc,x'_t$ are distinct elements of $\{x_1,\dotsc,x_r\}$, taken in every
possible order. 

\begin{lemma}\label{L:t-transitive}
  The Galois/monodromy group $\calG(X\to Y)$ of $f\colon X\to Y$ is $t$-transitive if and
  only if $X^{(t)}$ is irreducible.
\end{lemma}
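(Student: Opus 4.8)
The plan is to reduce the statement to the purely combinatorial assertion that the irreducible components of $X^{(t)}$ are in bijection with the orbits of $\calG(X\to Y)$ on the set of ordered $t$-tuples of distinct points of the fiber $X_y$. Granting this dictionary, $X^{(t)}$ is irreducible precisely when there is a single such orbit, which is exactly the condition that $\calG(X\to Y)$ act transitively on ordered $t$-tuples of distinct elements of $[r]$, i.e.\ that it be $t$-transitive. Thus the entire content of the lemma is this orbit--component correspondence, and the proof consists of setting it up on both sides and matching it through a projection.

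First I would shrink the base. Since $\K$ is perfect and $f$ is generically \'etale, after replacing $Y$ by a dense open smooth subset $U$ I may assume that $f$ is finite \'etale of degree $r$ over $U$. This loses nothing for the question at hand: a component of $X^t$ dominates $Y$ if and only if it meets the preimage of $U$, and over $U$ the non-dominating components of $X^t$ have disappeared, since a point of a fiber over a regular value is a smooth point of $X^t$ and therefore lies in a unique component, forcing every such fiber point into a dominating component. Hence the components of $X^{(t)}$ correspond bijectively to those of $X^t|_U$, and likewise $\calG(X\to Y)$ is computed from $X^r|_U$. The payoff is that over the smooth base $U$ all of $X|_U$, $X^t|_U$, and $X^r|_U$ are smooth and finite \'etale over $U$, so each of their irreducible components is open and closed.

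Next I would exploit the projection $p\colon X^r\to X^t$ onto the first $t$ coordinates, which over $U$ is a finite \'etale surjective morphism of finite \'etale $U$-schemes and hence carries each clopen irreducible component onto a single component. Recall that $\calG(X\to Y)$ is by definition the stabilizer in $S_r$ of the component $X^r_{\bx}$ of $X^r$ through $\bx=(x_1,\dots,x_r)$; tracking the $S_r$-action on the $r!$ orderings in the fiber identifies the components of $X^r|_U$ with the right cosets of $\calG(X\to Y)$, the component through an ordering $w$ meeting the fiber in $\{\,\sigma w:\sigma\in\calG(X\to Y)\,\}$. Applying $p$ and using $(\sigma w)(i)=\sigma(w(i))$, the image component meets the fiber of $X^t|_U$ in exactly the $\calG(X\to Y)$-orbit of the $t$-tuple $(w(1),\dots,w(t))$ under the coordinatewise action. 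Since every ordered $t$-tuple of distinct indices extends to an ordering $w$, this shows that two such $t$-tuples lie in the same component of $X^{(t)}$ if and only if they lie in the same $\calG(X\to Y)$-orbit, which is the desired dictionary and completes the proof.

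The routine parts are the standard \'etale-cover facts: that finite \'etale maps are open and closed, so that components are clopen and project onto components, and that over regular values fiber points lie in unique components. The one step demanding care, and the main obstacle, is reconciling the paper's component-theoretic definition of $\calG(X\to Y)$, phrased through $X^r$, with the coordinatewise action on $t$-tuples: one must get the coset bookkeeping right so that the image under $p$ of a coset of orderings is precisely one orbit of $t$-tuples, neither larger nor smaller. Verifying that the fiber of the image component is exactly a single orbit---using surjectivity of $p$ on fibers in one direction and injectivity of the component-to-orbit assignment in the other---is the crux.
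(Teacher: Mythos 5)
Your proof is correct and follows essentially the same route as the paper's: both use the projection $X^r\to X^t$ forgetting the last $r-t$ coordinates, identify the fiber over a regular value of the image of a component of $X^r$ with a $\calG(X\to Y)$-orbit of $t$-tuples, and conclude that irreducibility of $X^{(t)}$ is equivalent to $t$-transitivity. The only difference is one of thoroughness: the paper argues with the single component $X^r_{\bx}$ and compares its image's fiber with the full fiber of $X^{(t)}$, while you restrict to the \'etale locus and set up the complete component--orbit dictionary, which is a more detailed rendering of the same idea rather than a different method.
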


In particular, $\calG(X\to Y)$ is transitive if and only if $X$ is irreducible.

\begin{proof}
 Let $y\in Y(\K)$ be a regular value of $f$, order the points of $X_y$, and set 
 $\bx:=(x_1,\dotsc,x_r)$.
 Let $X^r_\bx$ be the component of $X^r$ containing $\bx$ and consider its image in $X^t$
 under the projection map $\pi\colon X^r\to X^t$ given by forgetting the last $r{-}t$
 factors in the fiber product.
 As $y$ is a regular value, we have $X^r_\bx\subset X^{(r)}$, and so $\pi(X^r_\bx)$ is an
 irreducible component of $X^{(t)}$.
 The fiber of $\pi(X^r_\bx)$ over $y$ consists of all $t$-tuples
 $(x_{\sigma(1)},\dotsc,x_{\sigma(t)})$ for $\sigma\in\calG(X\to Y)$.

 If $\calG(X\to Y)$ is $t$-transitive, then $\pi(X^r_\bx)=X^{(t)}$, which implies that
 $X^{(t)}$ is irreducible. 
 Conversely, if $X^{(t)}$ is irreducible, then $\pi(X^r_\bx)=X^{(t)}$, which implies that
 $\calG(X\to Y)$ is $t$-transitive. 
\end{proof}

%
\subsection{Vakil's Criteria}
Suppose that we have $f\colon X\to Y$ as above with $X$ irreducible and $Y$ smooth.
Then the Galois/monodromy group $\calG(X\to Y)$ is a transitive subgroup of the symmetric
group $S_r$. 
A subgroup $G$ of $S_r$ is \demph{at least alternating} if it is either the
alternating subgroup of $S_r$ or the full symmetric group $S_r$.
Vakil gave several criteria which may be used to show that $\calG(X\to Y)$ is at least
alternating.
We follow the discussion of~\cite[\S1.1]{BdCS}.

Suppose that we have a fiber diagram
 \begin{equation}\label{Eq:fiber_diagram}
  \raisebox{-22.5pt}{\begin{picture}(64,45)(-3,-1)
   \put(3.5,35){$W$} \put(16,35){$\lhra$} \put(47,35){$X$}
   \put(-3,19){$f$}\put(9,32){\vector(0,-1){20}}
      \put(52,32){\vector(0,-1){20}}\put(54,19){$f$}
   \put(4, 0){$Z$} \put(16, 0){$\lhra$} \put(47, 0){$Y$}
  \end{picture}}
 \end{equation}
where $Z\hookrightarrow Y$ is the closed embedding of a Cartier divisor,  $Y$ 
is smooth in codimension one along $Z$,
and $f\colon W\to Z$ is a generically finite and separable morphism of degree $r$.
When $W$ has at most two components, we have the following.
 \begin{enumerate}
  \item[(i)] If $W$ is irreducible, then there is an inclusion $\calG(W\to Z)$ into
       $\calG(X\to Y)$. 
  \item[(ii)] If $W$ has two components, $W_1$ and $W_2$, each of which maps dominantly to
    $Z$ of respective degrees $r_1$ and $r_2$, then the monodromy group of $f\colon W\to
    Z$ is a subgroup of 
    $\calG(W_1\to Z)\times \calG(W_2\to Z)$ that maps surjectively onto each factor
    $\calG(W_i\to Z)$ and which 
    includes into $\calG(X\to Y)$ (via $S_{r_1}\times S_{r_2}\hookrightarrow S_r$). 
 \end{enumerate}

In~\cite[\S3]{Va06b}, Vakil gave criteria for deducing that $\calG(X\to Y)$ is at least
alternating. 
This follows by  purely group-theoretic arguments including Goursat's Lemma.

\begin{VC}
 Suppose that we have a fiber diagram~$\eqref{Eq:fiber_diagram}$.
 Then $\calG(X\to Y)$ is at least alternating if one of the following holds.
\begin{enumerate}
 \item[{\rm (a)}]   If we are in case $({\rm i})$ and $\calG(W\to Z)$ is at least alternating.
 
 \item[{\rm (b)}]  If we are in case $({\rm ii})$, $\calG(W_1\to Z)$ and $\calG(W_2\to Z)$
                   are at least alternating, and either $r_1\neq r_2$ or $r_1=r_2=1$.

 \item[{\rm (c)}]  If we are in case $({\rm ii})$, $\calG(W_1\to Z)$ and $\calG(W_2\to Z)$
                   are at least alternating, one of $r_1$ or $r_2$ is not $6$, and
                   $\calG(X\to Y)$ is $2$-transitive. 
 \end{enumerate}
\end{VC}

In the Introduction, we referred to Criterion (b) as Vakil's combinatorial criterion,
  and (c) as his stronger criterion.

\begin{remark}\label{R:Vakil}
 These criteria apply to more general inclusions $Z\hookrightarrow Y$ of an irreducible
 variety into $Y$.
 All that is needed is that $Y$ is generically smooth along $Z$, for then we may replace
 $Y$ by an affine open set meeting $Z$ and there are subvarieties
 $Z=Z_0\subset Z_1\subset\dotsb\subset Z_m=X$ with each inclusion $Z_{i-1}\subset Z_i$
 that of a Cartier divisor where $Z_i$ is smooth in codimension one along $Z_{i-1}$, and
 then apply induction.
 \hfill\qed
\end{remark}

Vakil observed that as
the alternating group is least $(r{-}2)$-transitive, a consequence of Criterion
(c) is that to show that a Galois/monodromy group is 
$(r{-}2)$-transitive, it often suffices to show that it is merely doubly transitive.

%
\section{Schubert Calculus}
\label{S:SC}

We
develop the Schubert calculus for the Grassmannian in a form that we will use.
More material, including proofs and references, may be found in~\cite{Fu97}. 
Write \defcolor{$\langle A\rangle$} for the linear span of a subset $A$ of a vector space.
For a positive integer $n$, write \defcolor{$[n]$} for the set $\{1,\dotsc,n\}$.
For a finite-dimensional $\K$-vector space $V$, let \defcolor{$\P(V)$} be the projective
space of 1-dimensional linear subspaces of $V$.
For $0<k<\defcolor{n}:=\dim V$, the \demph{Grassmannian $\Gr(k,V)$} of $k$-dimensional
linear subspaces of $V$ is a smooth irreducible algebraic variety of dimension $k(n{-}k)$.
Then $\Gr(1,V)=\P(V)$.
We will also write $\Gr(k,n)$ for $\Gr(k,V)$.
The Schubert calculus concerns $k$-planes in $V$ having prescribed positions
with respect to other linear subspaces.

The prescribed positions are encoded by \demph{partitions}, which are weakly decreasing
sequences of nonnegative integers,
\[
   \defcolor{\lambda}\ \colon\ n{-}k\geq\lambda_1\geq\lambda_2\geq\dotsb\geq\lambda_k\geq
   0\,.
\]
For example $(4,4,3,1,0)$ is a partition for $\Gr(5,10)$ and $(2,1,0,0)$ is a partition
for $\Gr(4,7)$.
We typically omit trailing 0s and represent a partition by a left-justified
array of boxes with $\lambda_i$ boxes in row $i$. 
Thus these two partitions are
\[
   (4,4,3,1)\ =\ \,\raisebox{-7.5pt}{\includegraphics{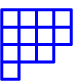}}
   \qquad\mbox{and}\qquad
   (2,1)\ =\ \,\raisebox{-2pt}{\includegraphics{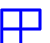}}\ .
\]

A partition prescribes the position of a $k$-plane with respect to a (complete)
\demph{flag} of subspaces, which is a sequence
\[
   \defcolor{\Fdot}\ \colon\ 
     F_1\subsetneq F_2\subsetneq \dotsb \subsetneq F_n\ =\ V\,,
\]
of linear subspaces with $\dim F_i=i$.
Given a partition $\lambda$ and a flag $\Fdot$, the Schubert condition of type $\lambda$
imposed by the flag $\Fdot$ on $k$-planes $H\in\Gr(k,V)$ is 
\[
   \dim H\cap F_{n-k+i-\lambda_i}\geq i\qquad\mbox{for}\quad i=1,\dotsc,k\,.
\]
For the partition $(2,1)$ this is
\[
  \dim H\cap F_{n-k-1}\geq 1
   \qquad\mbox{and}\qquad
  \dim H\cap F_{n-k+1}\geq 2\,,
\]
and the other conditions given by the trailing 0s are that $\dim H\cap F_{n-k+i}\geq i$
for $i\geq 3$, which always hold.

The set of all $H\in\Gr(k,V)$ which satisfy the Schubert condition $\lambda$ on the flag
$\Fdot$ is a \demph{Schubert variety}
 \begin{equation}\label{Eq:GrassSchubVar}
   \defcolor{\Omega_\lambda \Fdot}\ :=\ 
   \{ H\in\Gr(k,V)\,\mid\, \dim H\cap F_{n-k+i-\lambda_i}\geq i
       \mbox{ for }i=1,\dotsc,k\}\,.
 \end{equation}
This is an irreducible subvariety of $\Gr(k,V)$ of codimension 
$\defcolor{|\lambda|}:= \lambda_1+\dotsb+\lambda_k$ and thus of dimension 
$k(n{-}k)-|\lambda|$.
The Schubert variety has a distinguished open subset $\Omega^\circ_\lambda\Fdot$,
which is the locus where the inequalities~\eqref{Eq:GrassSchubVar} are all equalities.

\subsection{Schubert varieties in manifolds of partial flags}
We will also need Schubert varieties in manifolds of partial flags.
A partial flag $\Fdot$ is an increasing sequence of subspaces
\[
   \Fdot\ \colon\ 
    F_{a_1}\subsetneq F_{a_2} \subsetneq\dotsb\subsetneq F_{a_m}\subsetneq V\,,
\]
where $\dim F_{a_i}=a_i$.
This sequence of dimensions $\adot:=(a_1,\dotsc,a_m)$ is the \demph{type} of the
flag $\Fdot$.
The set \demph{$\Fl(\adot,V)$} of all flags of type $\adot$ forms a smooth 
variety of dimension
\[
   \sum_{i=1}^m (n-a_i)(a_i-a_{i-1})\,,
\]
where $a_0$ is taken to be $0$.

This flag variety has distinguished Schubert varieties, which we describe as follows.
Flags $\Fdot$ of type $\adot$ and $\Edot$ of type $\bdot=(b_1,\dotsc,b_l)$ have a
\demph{position}, which is encoded by the \demph{rank array} \defcolor{$\rk(\Fdot,\Edot)$}.
This is the $m\times l$ array where
\[
   \rk(\Fdot,\Edot)_{i,j}\ :=\ \dim F_{a_i}\cap E_{b_j}
    \qquad\mbox{for}\quad i=1,\dotsc,m\mbox{ and } j=1,\dotsc,l\,.
\]
The set of flags $\Fpdot \in\Fl(\adot,V)$ whose position with respect to $\Edot$ is at
least that of $\Fdot$,
 \begin{equation}\label{Eq:SVar}
   X_r(\Edot)\ :=\ \{\Fpdot\in\Fl(\adot,V)\,\mid\, 
      \dim F'_{a_i}\cap E_{b_j}\geq \rk(\Fdot,\Edot)_{i,j} \, ,\ \forall i,j\}\,,
 \end{equation}
is a \demph{Schubert variety}.
This irreducible subvariety of $\Fl(\adot,V)$ has a distinguished dense open
subset $X_r^\circ(\Edot)$ consisting of $\Fpdot$ for which 
$\dim F'_{a_i}\cap E_{b_j}=\rk(\Fdot,\Edot)_{i,j}$ for all $i,j$. 

Only subspaces $F_{n-k+i-\lambda_i}$ with $\lambda_i\neq 0$  in a flag $\Fdot$ are used
to define the Schubert variety $\Omega_\lambda\Fdot$~\eqref{Eq:GrassSchubVar}.
Thus we will often replace complete flags with partial flags of the type occurring
in~\eqref{Eq:GrassSchubVar}, that is 
\[
   \Fdot\ \colon\ 
     F_{n-k+1-\lambda_1}\subsetneq
     F_{n-k+2-\lambda_2}\subsetneq \dotsb \subsetneq
     F_{n-k+m-\lambda_m}\,,
\]
and $\lambda_{m+1}=0$ ($\lambda_m$ is the last nonzero part of $\lambda$.)
Write \defcolor{$\Fl(\lambda,V)$} for this space of partial flags.
It has dimension 
 \begin{multline*}
   \qquad
   \defcolor{N(\lambda)}\ :=\ 
   (n{-}k{+}1{-}\lambda_1)(k{+}\lambda_1{-}1) +
   (\lambda_1{-}\lambda_2{+}1)(k{+}\lambda_2{-}2) \\ 
   + \dotsb +
   (\lambda_{m-1}{-}\lambda_m{+}1)(k{+}\lambda_m{-}m)\,.
  \qquad
 \end{multline*}
For example, when $k=4$ and $n=9$, $N(3,2)=3\cdot 6 + 2\cdot 4 =26$.

When $\lambda=(a,0,\dotsc,0)$ has only one nonzero part, so that it consists of a single
row, we will call it a \demph{special Schubert condition}, and simply write it as $a$.
Dually, when $\lambda$ consists of a single column, so that
$\lambda=(1,\dotsc,1,0,\dotsc,0)$ with $b$ 1s, then it is a \demph{dual special Schubert
  condition}, and we write it as $(1^b)$.

We may omit the subscripts giving the dimensions of subspaces in a flag in
$\Fl(\lambda,V)$. 
Thus for $k=4$ and $n=9$, $\Omega_2K$ means that the partition is $(2,0,0,0)$ and
$K$ has dimension $9{-}4{+}1{-}2=4$, and 
$\Omega_{(3,2)}\Edot$ means that the flag is $E_3\subset E_5$.

Associating a $k$-plane $H$ to its annihilator $H^\perp$ in the dual space $V^*$ of
$V$ gives an isomorphism between $\Gr(k,V)$ and $\Gr(n{-}k,V^*)$, where $n=\dim V$.
Under this isomorphism, $\Omega_\lambda\Fdot$ is sent to 
$\Omega_{\lambda^t}\Fdot^\perp$, where $\lambda^t$ is the transpose of $\lambda$, the
partition obtained by interchanging rows and columns, and $\Fdot^\perp$ is the flag whose
$i$-plane is the annihilator of $F_{n-i}$.
In particular, special Schubert varieties are sent to dual special Schubert
varieties. 

We close this subsection with a dimension calculation.
Let $\adot\colon a_1<\dotsb<a_m$ and $\bdot\colon b_1\leq\dotsb\leq b_m$ be types of
$m$-step flags. 
(Note that we allow $b_i=b_{i+1}$, so that a flag of type $\bdot$ may have repeated
subspaces.) 
For any $\eldot\in\Fl(\bdot,V)$,
consider the Schubert subvariety of $\Fl(\adot,V)$ defined by
\[
   \defcolor{X(\eldot)}\ :=\ 
   \{\Fdot\in\Fl(\adot,V)\ :\ \ell_{b_i}\subset F_{a_i}\ \mbox{for}\ i=1,\dotsc,m\}\,.
\]

\begin{lemma}\label{Lem:Xeldot}
  The Schubert variety $X(\eldot)$ is nonempty if and only if 
 \begin{equation}
  \label{Eq:empty}
    a_j-a_{j-1}\ \geq\ b_j-b_{j-1}\ \mbox{ for }\ j=1,\dotsc,m\,,
 \end{equation}
 where $a_0=b_0=0$.
 When~$\eqref{Eq:empty}$ is satisfied, $X(\eldot)$ has dimension 
 \begin{equation}
  \label{Eq:dimension}
    \sum_{j=1}^m (n-a_j)\bigl((a_j-a_{j-1}) - (b_j-b_{j-1})\bigr)\,.
 \end{equation}
\end{lemma}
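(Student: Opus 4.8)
The plan is to realize $X(\eldot)$ as the top of a tower of Grassmannian bundles, built one subspace at a time from the bottom, and to read off both the nonemptiness criterion and the dimension from the individual fibers. For $1\leq j\leq m$ let $X_j$ denote the analogous variety inside $\Fl((a_1,\dotsc,a_j),V)$ cut out by the conditions $\ell_{b_i}\subseteq F_{a_i}$ for $i\leq j$, so that $X_1=\{F_{a_1}:\ell_{b_1}\subseteq F_{a_1}\}=\Gr(a_1{-}b_1,\,V/\ell_{b_1})$ and $X_m=X(\eldot)$. Forgetting the top subspace gives a forgetful morphism $X_j\to X_{j-1}$, and I would show that over a dense open subset of $X_{j-1}$ this map is a Grassmannian bundle whose fiber can be described explicitly, then run the induction on $m$.

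Next I would analyze a single fiber. Over a flag $(F_{a_1},\dotsc,F_{a_{j-1}})\in X_{j-1}$, the fiber is the set of $a_j$-dimensional $F_{a_j}$ with $F_{a_{j-1}}+\ell_{b_j}\subseteq F_{a_j}$, that is $\Gr\bigl(a_j-\dim(F_{a_{j-1}}+\ell_{b_j}),\,V/(F_{a_{j-1}}+\ell_{b_j})\bigr)$. The governing quantity is thus $\dim(F_{a_{j-1}}+\ell_{b_j})=a_{j-1}+b_j-\dim(F_{a_{j-1}}\cap\ell_{b_j})$. Since $\ell_{b_{j-1}}\subseteq F_{a_{j-1}}\cap\ell_{b_j}$ always holds and a general flag of $X_{j-1}$ meets $\ell_{b_j}$ in no more than this forced subspace, the crux is to show that on a dense open subset $\dim(F_{a_{j-1}}\cap\ell_{b_j})=b_{j-1}$, so that $\dim(F_{a_{j-1}}+\ell_{b_j})=a_{j-1}+b_j-b_{j-1}$ is constant. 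Granting this, the fiber is $\Gr\bigl((a_j{-}a_{j-1})-(b_j{-}b_{j-1}),\,V/(F_{a_{j-1}}+\ell_{b_j})\bigr)$, which is nonempty exactly when $(a_j{-}a_{j-1})-(b_j{-}b_{j-1})\geq 0$ and then has dimension $\bigl((a_j{-}a_{j-1})-(b_j{-}b_{j-1})\bigr)(n-a_j)$, precisely the $j$-th summand of~\eqref{Eq:dimension}. The base case $X_1=\Gr(a_1{-}b_1,V/\ell_{b_1})$ already produces the $j=1$ term $(n-a_1)(a_1-b_1)$.

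Then I would assemble the induction. If~\eqref{Eq:empty} holds at every step, each forgetful map is a surjective fibration with irreducible base and irreducible fibers, so $X(\eldot)$ is irreducible and nonempty and its dimension is the sum of the fiber dimensions, giving~\eqref{Eq:dimension}. For necessity, if~\eqref{Eq:empty} fails at some index $j$, then over a general point of $X_{j-1}$ the span $F_{a_{j-1}}+\ell_{b_j}$ has dimension $a_{j-1}+b_j-b_{j-1}>a_j$, so no $a_j$-dimensional $F_{a_j}$ can contain it: the fiber is empty, the forgetful map cannot dominate $X_{j-1}$, and inductively $X(\eldot)$ is empty.

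The main obstacle is the constancy-of-rank step: pinning down that $\dim(F_{a_{j-1}}\cap\ell_{b_j})=b_{j-1}$ on a dense open subset of $X_{j-1}$, which is exactly what makes the forgetful maps honest Grassmannian bundles, what matches the tower decomposition to~\eqref{Eq:dimension} term by term, and what lets a vanishing fiber certify emptiness. This is where the hypotheses that $\adot$ is strictly increasing and that the $\ell_{b_j}$ are nested enter, and I expect verifying it to be the only nonformal part of the argument.
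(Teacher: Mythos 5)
Your plan is essentially the paper's proof. The paper also inducts on the flag length via the truncation $\tau$ that forgets the last subspace, restricts to the open subset $X'(\tau(\eldot))$ of the truncated variety where $F_{a_{m-1}}\cap\ell_{b_m}=\ell_{b_{m-1}}$, and identifies the fiber over such a point as $\Gr(a_m{-}\delta,n{-}\delta)$ with $\delta=a_{m-1}+b_m-b_{m-1}$, of dimension $(n-a_m)\bigl((a_m-a_{m-1})-(b_m-b_{m-1})\bigr)$ --- exactly your fiber computation and your term-by-term match with \eqref{Eq:dimension}.

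The step you defer (your ``constancy-of-rank'' statement) is settled in the paper by a one-line observation: the locus where $F_{a_{m-1}}\cap\ell_{b_m}=\ell_{b_{m-1}}$ is open by semicontinuity of intersection dimension, and it is nonempty precisely when $F_{a_{m-1}}$ and $\ell_{b_m}$ have room to span inside $V$, that is, when $a_{m-1}+b_m-b_{m-1}\leq n$; this follows from \eqref{Eq:empty} at $j=m$ together with $a_m\leq n$. So the needed input is the inequality \eqref{Eq:empty} itself, not (as you guessed) the strict increase of $\adot$ or the nestedness of $\eldot$ as such. One caveat, which your write-up shares with the paper: in the necessity direction, emptiness of the fibers over \emph{general} points of $X_{j-1}$ does not by itself force $X(\eldot)=\emptyset$, since $X(\eldot)$ could lie entirely over the special locus where $\dim(F_{a_{j-1}}\cap\ell_{b_j})>b_{j-1}$, where the span is smaller and the fiber can be nonempty. (For instance, with $\adot=(2,3)$ and $\bdot=(1,3)$ one can take $F_3=\ell_3$ and any $F_2$ with $\ell_1\subset F_2\subset\ell_3$, even though \eqref{Eq:empty} fails at $j=2$.) The paper's proof makes the same silent restriction to $X'$, so your proposal matches its level of rigor; what both arguments really establish, and what the paper actually uses later, is the dimension count \eqref{Eq:dimension} for the stratum lying over the generic locus.
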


Summing~\eqref{Eq:empty} for $j=1,\dotsc,i$ gives the obvious necessary
condition for nonemptiness.

\begin{proof}
 We prove this by induction on the length $m$ of the flags.
 Suppose first that $m=1$.
 Then condition~\eqref{Eq:empty} is $a_1\geq b_1$, and 
 $X(\eldot)=\{F_1\in\Gr(a_1;V) : \ell_1\subset F_1\}$,
 which is simply $\Gr(a_1{-}b_1,V/\ell_1)\simeq\Gr(a_1-b_1,n-b_1)$
 and has dimension $(n{-}a_1)(a_1{-}b_1)$.
 This is nonempty if and only if $a_1\geq b_1$.

 Now suppose that the statement of the lemma holds for flags of length $m{-}1$.
 Given a flag $\Edot$ of length $m$, let 
 $\tau(\Edot):=E_1\subset\dotsb\subset E_{m-1}$ be its truncation to length $m{-}1$ and 
 $\pr(\Edot):=E_m$, its last subspace.
 Set $X'(\tau(\eldot))$ to be those 
 $F_1\subset\dotsb\subset F_{m-1}$ in $X(\tau(\eldot))$ where
 $F_{m-1}\cap\ell_m=\ell_{m-1}$, which is an open subset of $X(\tau(\eldot))$.
 This set is nonempty (and therefore dense) only if there exists a flag $\Fdot$ in
 $X(\tau(\eldot))$ with $F_{m-1}\cap \ell_{m}=\ell_{m-1}$, which is equivalent to the
 inequality $a_{m-1}+b_{m}-b_{m-1}\leq n$.
 But this holds by~\eqref{Eq:empty}, as $a_m\leq n$.

 For $\Fdot'\in X'(\tau(\eldot))$ we consider the fiber
 $\tau^{-1}(\Fdot')\cap X(\eldot)$.
 This is isomorphic to $\pr(\tau^{-1}(\Fdot')\cap X(\eldot))$, which is
\[
   \{ F_m\in\Gr(a_m,V)\ :\ \langle F_{m-1},\ell_m\rangle \subset F_m\}
   \ \simeq\ \Gr(a_m{-}\delta, n{-}\delta)\,,
\]
 where $\delta=\dim \langle F_{m-1},\ell_m\rangle$, which is 
 $a_{m-1}+b_m-b_{m-1}$, as $\Fdot'\in X'(\tau(\eldot))$.
 We must have that $a_m\geq a_{m-1}+b_m-b_{m-1}$ for this to be nonempty, and when this is
 satisfied, we have
\[
   \dim \bigl(\tau^{-1}(\Fdot')\cap X(\eldot)\bigr)\ =\ 
    (n-a_m)[a_m-(a_{m-1}+b_m-b_{m-1})]\,,
\]
 the dimension of this Grassmannian.
 This completes the proof. 
\end{proof}

%
\subsection{Schubert problems on Grassmannians}

A \demph{Schubert problem} on $\Gr(k,n)$ is a list 
$\defcolor{\blambda}=(\lambda^1,\dotsc,\lambda^s)$ of partitions with 
$|\lambda^1|+\dotsb+|\lambda^s|=k(n{-}k)$.
Given a Schubert problem $\blambda$ and a list of general flags $\Fdot^1,\dotsc,\Fdot^s$ with
$\Fdot^i\in\Fl(\lambda^i)$, the intersection 
 \begin{equation}\label{Eq:intersection}
   \Omega_{\lambda^1}\Fdot^1 \,\bigcap\, 
   \Omega_{\lambda^2}\Fdot^2 \,\bigcap\, \dotsb \,\bigcap\, 
   \Omega_{\lambda^s}\Fdot^s \,.
 \end{equation}
is transverse~\cite{K74,Va06a} and consists of finitely many points.
This number \demph{$r(\blambda)$} of points is independent of the choice of general
flags and the algebraically closed field $\K$. 
The Schubert problem is \demph{trivial} if $r(\blambda)=0$. 
By transversality, when the flags are general, all points of the intersection lie in the
intersection of the open Schubert varieties
\[
   \Omega_{\lambda^1}^\circ\Fdot^1 \,\bigcap\, 
   \Omega_{\lambda^2}^\circ\Fdot^2 \,\bigcap\, \dotsb \,\bigcap\, 
   \Omega_{\lambda^s}^\circ\Fdot^s \,,
\]
where $\Omega_\lambda^\circ\Fdot$ is the subset of $\Omega_\lambda\Fdot$ where the
inequalities in~\eqref{Eq:GrassSchubVar} are replaced by equalities.

For a Schubert problem $\blambda=(\lambda^1,\dotsc,\lambda^s)$, consider the family of all
intersections~\eqref{Eq:intersection},
\[
   \defcolor{\calX_{\blambda}}\ :=\ 
   \{ (H,\, \Fdot^1,\dotsc,\Fdot^s)\;:\; \Fdot^i\in\Fl(\lambda^i)\quad\mbox{and}\quad
      H\in \Omega_{\lambda^i}\Fdot^i\quad\mbox{for}\quad i=1,\dotsc,s\}\,.
\]
Set $\calY_{\blambda}:=\prod_i \Fl(\lambda^i)$, the space of $s$-tuples of partial flags for
Schubert varieties indexed by the partitions of $\blambda$.
This family $\calX_{\blambda}$ is equipped with two projections
$\pi\colon\calX_{\blambda}\to\Gr(k,n)$ and 
$f\colon\calX_{\blambda}\to\calY_{\blambda}$. 
Fibers of the projection to $\calY_{\blambda}$ are intersections~\eqref{Eq:intersection},
which, when transverse and nonempty, are zero-dimensional.  
Thus we expect that $\dim \calX_{\blambda}=\dim \calY_{\blambda}$.

In fact, for a Schubert problem $\blambda$ we will always have this equality of dimension.
To see this, define the subvariety $\Psi_\lambda H\subset\Fl(\lambda)$ for $H\in\Gr(k,n)$ to
be 
\[
   \defcolor{\Psi_\lambda H}\ :=\ 
   \{ \Fdot\in\Fl(\lambda)\,\mid\, H\in\Omega_\lambda \Fdot\}\,.
\]
This is a Schubert subvariety of $\Fl(\lambda)$ of codimension $|\lambda|$, and it is
irreducible. 
Now consider fibers of the projection $\defcolor{\pi}\colon \calX_{\blambda}\to\Gr(k,n)$.
For $H\in\Gr(k,V)$, we have
 \begin{equation}\label{Eq:fibre}
  \pi^{-1}(H)\ =\ 
   \Psi_{\lambda^1}H\;\times\;
   \Psi_{\lambda^2}H\;\times\;\dotsb\;\times\;
   \Psi_{\lambda^s}H\,.
 \end{equation}
(Here, as elsewhere, we identify a fiber $\pi^{-1}(H)$ with its image under the second
projection $f$ to $\calY_{\blambda}$.)
Since any two fibers are isomorphic, $\pi\colon\calX_{\blambda}\to\Gr(k,n)$
realizes $\calX_{\blambda}$ as a fiber bundle.
The dimension of a fiber $\pi^{-1}(H)$ is the sum of dimensions of the
Schubert varieties in the product~\eqref{Eq:fibre}, which is 
\[
   \sum_{i=1}^s  (\dim\Fl(\lambda^i)-|\lambda^i|)
      \ =\  \dim\calY_{\blambda}-\dim\Gr(k,n)\,,
\]
from which it follows that $\dim \calX_{\blambda}=\dim \calY_{\blambda}$.
Since the base and fibers of the projection $\pi\colon\calX_{\blambda}\to\Gr(k,n)$
are irreducible, we deduce that $\calX_{\blambda}$ is irreducible.

%
\begin{definition}
 Let $\blambda$ be a Schubert problem.
 By the transversality of a general intersection~\eqref{Eq:intersection}, the map 
 $\calX_{\blambda}\to\calY_{\blambda}$ a proper, generically finite and separable (i.e.\
 generically \'etale) morphism of $\K$-schemes of degree $r(\blambda)$. 
 Furthermore $\calY_{\blambda}$ is irreducible and smooth as it is a product of flag manifolds.
 The \demph{Galois group $\calG(\blambda)$} of the Schubert problem $\blambda$ is the
 Galois group of this family of all instances of the Schubert problem $\blambda$, that
 is, 
\[
    \calG(\blambda)\ :=\ \calG(\calX_{\blambda}\to\calY_{\blambda})\,,
\]
 which acts transitively, as $\calX_{\blambda}$ is irreducible.
\end{definition}

%
\subsection{Galois groups and reduction of Schubert problems}

A Schubert problem is reduced, if, roughly, it is not equivalent to a Schubert problem on a
smaller Grassmannian.
Every Schubert problem $\blambda$ is equivalent to a reduced Schubert problem $\bmu$, and 
we have $\calG(\blambda)\simeq \calG(\bmu)$. Thus, when proving that a Galois group has a given property, 
it suffices to assume that the Schubert problem is reduced.
We give a definition of reduced Schubert problems and sketch a proof of these
facts.

\begin{definition}\label{D:reduced}
 A Schubert problem $\blambda$ on $\Gr(k,n)$ is \demph{reduced} if for every pair of
 partitions $\mu,\nu$ from $\blambda$, none of the following hold.
 \begin{enumerate}
  \item[(a)]  $\mu_1=n{-}k$.
  \item[(b)]  $\mu_k>0$.
  \item[(c)]  There is some $i=1,\dotsc,k$ with $\mu_i+\nu_{k+1-i}\geq n{-}k$.
  \item[(d)]  There is some $i=1,\dotsc,k{-}1$ with $\mu_i+\nu_{k-i}>n{-}k$.
 \end{enumerate}
 If the Schubert problem is nontrivial, then (c) can hold at most with equality, for if 
 $\mu_i+\nu_{k+1-i}> n{-}k$ and the flags $\Edot, \Fdot$ are in general position, then 
 $\Omega_\mu\Edot\cap \Omega_{\nu}\Fdot = \emptyset$.  
\end{definition}

\begin{proposition}\label{Prop:reduced}
 Any nontrivial Schubert problem $\blambda$ is equivalent to a reduced Schubert problem
 $\bmu$ having an isomorphic Galois group, $\calG(\blambda)\simeq\calG(\bmu)$.
\end{proposition}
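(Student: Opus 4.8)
The plan is to remove one violation of Definition~\ref{D:reduced} at a time, each time passing to a strictly smaller Grassmannian while preserving the branched cover, and then to induct. Precisely, I would show that if $\blambda$ on $\Gr(k,n)$ fails to be reduced because one of the conditions (a)--(d) holds for a pair of its partitions, then $\blambda$ is equivalent to a Schubert problem $\bmu$ on some $\Gr(k',n')$ with $n'<n$, in the strong sense that $\calX_{\blambda}\to\calY_{\blambda}$ and $\calX_{\bmu}\to\calY_{\bmu}$ have the same degree and isomorphic Galois groups, $\calG(\blambda)\simeq\calG(\bmu)$. Since each reduction strictly decreases $n$ while leaving $r(\blambda)\geq 1$ unchanged, the process terminates at a reduced problem, which is the proposition.

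The key geometric observation is that each non-reduced case forces every solution $H$ either to contain a common line $\ell$ or to lie in a common proper subspace $W\subsetneq V$, where $\ell$ or $W$ is cut out algebraically by the offending flags. In case (a), $\lambda^j_1=n{-}k$ makes the smallest space of $\Fdot^j$ a line $\ell=F^j_1$ with $\ell\subset H$. In case (b), $\lambda^j_k>0$ gives $H\subset F^j_{n-\lambda^j_k}=:W$. In case (c) with equality $\mu_i+\nu_{k+1-i}=n{-}k$ (the strict inequality is excluded, as it forces $\Omega_\mu\Edot\cap\Omega_\nu\Fdot=\emptyset$ and hence triviality), the relevant flag steps $E_a$ and $F_b$ have $a+b=n{+}1$, so $\ell:=E_a\cap F_b$ is a line for general flags, and since $H\cap E_a$ and $H\cap F_b$ have dimensions summing to more than $k=\dim H$ they must meet inside $\ell$, forcing $\ell\subset H$. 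In case (d), $\mu_i+\nu_{k-i}>n{-}k$ gives flag steps $E_a,F_c$ with $a+c<n$, whence $E_a\cap F_c=0$ generically; then $H\cap E_a$ and $H\cap F_c$ are independent of total dimension at least $k$, so $H=(H\cap E_a)\oplus(H\cap F_c)\subset E_a{+}F_c=:W$, of dimension $<n$.

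Next I would carry out the reduction. In the two line cases I quotient by $\ell$, sending $H\mapsto H/\ell$ to identify solutions with those of a problem $\bmu$ on $\Gr(k{-}1,V/\ell)\simeq\Gr(k{-}1,n{-}1)$; in the two subspace cases I restrict to $W$, identifying solutions with those of $\bmu$ on $\Gr(k,W)\simeq\Gr(k,\dim W)$. The offending condition becomes vacuous after the reduction, and every surviving partition transforms into a Schubert condition for a new partition obtained by intersecting or projecting its flag against $\ell$ or $W$; in the clean cases (a),(b) only the forced partition changes, while (c),(d) alter both interacting partitions. That the remaining flags stay general with respect to $\ell$ or $W$ follows from genericity, and a dimension count in the spirit of Lemma~\ref{Lem:Xeldot} confirms that $\bmu$ is again balanced, $|\bmu|=k'(n'{-}k')$, with $r(\bmu)=r(\blambda)$. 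To upgrade this bijection of solutions to an isomorphism of Galois groups, I record the forced datum by a morphism from $\calY_{\blambda}$ to $\P(V)$ (resp.\ to the Grassmannian of the subspaces $W$) and observe that, together with the quotient or restriction, it exhibits $\calX_{\blambda}\to\calY_{\blambda}$ as a fiber bundle over this irreducible base whose fibers are copies of the family $\calX_{\bmu}\to\calY_{\bmu}$. Because the base is irreducible and the construction is equivariant for the ambient linear group, passing to a single fiber changes neither the number of solutions nor the irreducible-component structure of the fiber products defining the monodromy, giving $\calG(\blambda)\simeq\calG(\bmu)$.

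I expect the main obstacle to be exactly this last step: verifying that after the quotient or restriction the surviving conditions really are Schubert conditions of the stated partitions $\bmu$ on the smaller Grassmannian, and that the map recording $\ell$ or $W$ is dominant with irreducible general fibers, so that the Galois group is genuinely preserved rather than merely each contained in the other. Cases (c) and (d) are the most delicate, since there two conditions interact and both offending partitions are altered at once; here the transversality of the general intersection~\eqref{Eq:intersection} and the generality of the flags are what guarantee that $\ell$ (resp.\ $W$) is well-defined, of the expected dimension, and generic with respect to the remaining flags after reduction.
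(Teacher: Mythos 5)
Your proposal is correct and takes essentially the same route as the paper: the identical case analysis of Definition~\ref{D:reduced}(a)--(d), showing that (a) and (c) force a common line $\ell\subset H$ (with strict inequality in (c) excluded by nontriviality) while (b) and (d) force $H$ into a proper subspace $W$, followed by quotienting by $\ell$ or restricting to $W$ and iterating until the problem is reduced. For the preservation of Galois groups, the paper simply cites that the arguments of Remark~4 of~\cite{BdCS} generalize, which is the same fibration/pullback argument you sketch, so your flagging of that step as the delicate point is consistent with the level of detail in the paper's own proof.
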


We indicate how a nonreduced
Schubert problem $\blambda$ is equivalent to a Schubert problem on a smaller Grassmannian.
This process may be iterated to obtain an equivalent reduced Schubert problem.
The arguments given in Remark 4 of~\cite{BdCS} generalize to show that Galois groups
are preserved by this reduction process.

Suppose that $H\in\Omega_\mu\Edot$ in $\Gr(k,v)$ with $\dim V=n$.
Then
\[
  \dim H\cap E_{n-k+1-\mu_1}\geq 1 
   \quad\mbox{and}\quad
  \dim H\cap E_{n-\mu_k}\geq k\,.
\]
Thus if Condition (a) of Definition~\ref{D:reduced} holds, then 
$H\supset E_1$ and if Condition (b) holds, then $H\subset E_{n-\mu_k}\subsetneq V$.
If (a) holds, then $H/E_1\in \Omega_{\mu_{>1}}\Edot/E_1$, which is a Schubert variety in
$\Gr(k{-}1,V/E_1)$, where $\mu_{>1}$ is the partition $\mu_2\geq \mu_3\geq\dotsb\geq\mu_k$
and $\Edot/E_1$ is the image of the flag $\Edot$ in $V/E_1$,
specifically $(\Edot/E_1)_i = E_{i+1}/E_1$.
If $H\in\Omega_\lambda\Fdot$ where $\Fdot$ is a flag in general position
with respect to $\Edot$, then $H/E_1\in\Omega_\lambda \Fdot/E_1$, where $\Fdot/E_1$ is
the image of the flag $\Fdot$ in $V/E_1$, specifically, $(\Fdot/E_1)_i=(F_i+E_1)/E_1$.

If (b) holds, then $H\in\Omega_{\mu-\mu_k}\Edot|_{E_{n-\mu_k}}$, which is a Schubert variety in
$\Gr(k,E_{n-\mu_k})$, where $\mu-\mu_k$ is the partition
\[
   \mu_1-\mu_k\ \geq\ 
   \mu_2-\mu_k\ \geq\ \dotsb\ \geq\ 
   \mu_{k-1}-\mu_k\ \geq\ 0\,,
\]
and $\Edot|_{E_{n-\mu_k}}$ is the flag in $E_{n-\mu_k}$ induced by $\Edot$, specifically,
$E_1\subset E_2\subset \dotsb\subset E_{n-\mu_k}$. 
If $H\in\Omega_\lambda\Fdot$ where $\Fdot$ is a flag in general position
with respect to $\Edot$, then $H\in\Omega_\lambda \Fdot|_{E_{n-\mu_k}}$, where 
$(\Fdot|_{E_{n-\mu_k}})_i=F_{i+\mu_k}\cap E_{n-\mu_k}$.

The point of these reductions is that the original Schubert problem $\blambda$ is
equivalent to a Schubert problem $\bmu$ in a smaller Grassmannian.
All partitions of $\bmu$ are from $\blambda$, except that the partition $\mu$ has been
changed in that either its first row of length $n{-}k$ has been removed or all of its
columns of height $k$ have been removed.

For Condition (c) in Definition~\ref{D:reduced}, note that if
$H\in\Omega_\mu\Edot\cap\Omega_\nu\Fdot$, then for every $i=1,\dotsc,k$, 
 \begin{equation}\label{E:conditionC}
   \dim H\cap E_{n-k+i-\mu_i}\ \geq\ i
    \quad\mbox{and}\quad
   \dim H\cap F_{n-k+(k+1-i)-\nu_{k+1-i}}\ \geq\ k{+}1{-}i\,.
 \end{equation}
Since $H$ has dimension $k$, these conditions imply that
 \begin{equation}\label{E:consequence}
   \dim H \;\cap\; E_{n-k+i-\mu_i}\cap
       F_{n-k+(k+1-i)-\nu_{k+1-i}}\ \geq\  1\,.
 \end{equation}
If $\Edot,\Fdot$ are in general position and $\mu_i+\nu_{k+1-i}>n{-}k$,
then~\eqref{E:consequence} is impossible as
$E_{n-k+i-\mu_i}\cap F_{n-k+(k+1-i)-\nu_{k+1-i}}=\{0\}$.
Thus for the Schubert problem $\blambda$ to have solutions, we must have 
$\mu_i+\nu_{k+1-i}\leq n{-}k$.

Suppose that $\mu_i+\nu_{k+1-i}= n{-}k$.
Then $E_{n-k+i-\mu_i}\cap F_{n-k+(k+1-i)-\nu_{k+1-i}}$ is a 1-dimensional linear space
contained in $H$ that we will call \defcolor{$L$} and $H/L\in\Gr(k{-}1,V/L)$.
Furthermore, if $\mu'$ and $\nu'$ are obtained from $\mu$ and $\nu$ by omitting the parts
$\mu_i$ and $\nu_{k+1-i}$ and $\Edot/L$, $\Fdot/L$ are the images of $\Edot$ and $\Fdot$ in
$V/L$, then $H\in\Omega_{\mu'}\Edot/L\cap \Omega_{\nu'}\Fdot/L$. 
As before, if $H\in\Omega_\lambda\Gdot$ with $\Gdot$ in general position with respect to
$\Edot$ and $\Fdot$, then $H/L\in\Omega_\lambda \Gdot/L$.

Finally, note that if $H\in\Omega_\mu\Edot\cap\Omega_\nu\Fdot$, we
have~\eqref{E:conditionC} as well as $H\cap F_{n-k+(k-i)-\nu_{k-i}}\geq k{-}i$.
If $\Edot$ and $\Fdot$ are in general position, this implies that 
\[
   H\ \subset\ E_{n-k+i-\mu_i} + F_{n-k+(k-i)-\nu_{k-i}}\,.
\]
If we have $\mu_i+\nu_{k-i}>n{-}k$, then this sum $W$ of subspaces of the flags
has codimension $\mu_i+\nu_{k-i}-(n{-}k)$ in $V$.
Then $H\in\Omega_{\mu'}\Edot|_W\cap \Omega_{\nu'}\Fdot|_W$, where $\mu'$ is obtained from
$\mu$ by subtracting $\mu_i+\nu_{k-i}-(n{-}k)$ from each of the first $i$ parts of $\mu$
and $\nu'$ is obtained from $\nu$ by subtracting $\mu_i+\nu_{k-i}-(n{-}k)$ from each of
the first $k{-}i$ parts of $\nu$.

We give an example of this, where $k=3$ and $n=11$ with $\mu=(5,4,0)$ and $\nu=(6,1,0)$.
Then if $i=2$, $k=i=1$ and $\mu_2+\nu_1=10>8=n{-}k$.
Drawing $\mu$ together with $\nu$ (rotated by $180^\circ$) in the $3\times 8$ box shows
that there are two ($=10-8$) columns of height three covered.
Removing those columns from $\mu$ and $\nu$ gives $\mu'=(3,2,0)$ and $\nu'=(4,1,0)$.
\[
   \mu=\raisebox{-4pt}{\includegraphics{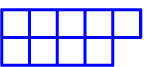}}\qquad
   \nu=\raisebox{-4pt}{\includegraphics{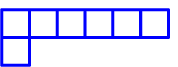}}\qquad\ 
   \raisebox{-6pt}{\includegraphics{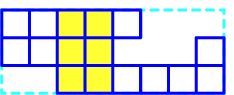}}\qquad\ 
   \mu'=\raisebox{-4pt}{\includegraphics{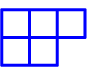}}\qquad
   \nu'=\raisebox{-4pt}{\includegraphics{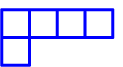}}
\]
As before, if $H\in\Omega_\lambda\Gdot$ in $\Gr(k,V)$ with $\Gdot$ in general position,
then $H\in\Omega_\lambda\Gdot|_W$ in $\Gr(k,W)$.

We also note that any Schubert problem $\blambda$ on $\Gr(k,n)$ is equivalent to a dual
Schubert problem on the dual Grassmannian $\Gr(n{-}k,n)$ which has an isomorphic
Galois group. 

%
\section{Double transitivity in Schubert Calculus}
\label{S:DT}

We develop tools for showing that the Galois/monodromy group $\calG(\blambda)$ of a
Schubert problem $\blambda$ on the Grassmannian $\Gr(k,n)$ is doubly transitive. 
We will assume that $r(\blambda)\geq 2$, for otherwise double transitivity is an empty
condition. 
By Proposition~\ref{Prop:reduced}, we may assume that $\blambda$ is reduced.
By Lemma~\ref{L:t-transitive}, it suffices to show that $\calX^{(2)}_{\blambda}$ is
irreducible.  
For this, we investigate the projection $\pi\colon\calX^2_\blambda\to\Gr(k,n)^2$.
Here, $\calX^2_\blambda$ is a subset of the fiber product $\calX^2_\blambda\times_{\calY_\blambda}\calX^2_\blambda$, but  
$\Gr(k,n)^2$ is simply the ordinary product of Grassmannians.
Unlike the projection $\calX_\blambda\to\Gr(k,n)$, this is not a fiber bundle as
$\pi^{-1}(H_1,H_2)$ depends upon the relative position of $H_1$ and $H_2$.
However, over the locus \defcolor{$\calO_d$} where $\dim H_1\cap H_2=d$ it is a fiber
bundle.
Since $\calX^{(2)}_{\blambda}$ is a union of components of $\calX^2_{\blambda}$ that
project dominantly to $\calY_{\blambda}$, we will study $\pi^{-1}(\calO_d)$ and determine
the components $\calU$ with $\dim\calU=\dim\calY_{\blambda}$. 
From our description of such components, we 
prove Lemma~\ref{Lem:MainLemma}, which gives a numerical condition for $\calU$ 
that is equivalent to $\dim\calU = \dim\calY_{\blambda}$.
In Section~\ref{S:MR} 
we use Lemma~\ref{Lem:MainLemma} to show that $\calX^{(2)}_{\blambda}$ is irreducible 
when $\blambda$ is a special Schubert problem, or a Schubert problem in $\Gr(3,n)$.

We first determine the set-theoretic
fibers $\pi^{-1}(H_1,H_2)$ of $\pi\colon\calX^2_\blambda\to\Gr(k,n)^2$.

\begin{lemma}\label{L:2-fiber}
 Let $\blambda=(\lambda^1,\dotsc,\lambda^s)$ be a Schubert problem on $\Gr(k,n)$ and 
 $(H_1,H_2)\in\Gr(k,n)^2$.
 If $\pi\colon \calX^{2}_\blambda
    =\calX_{\blambda}\times_{\calY_{\blambda}}\calX_{\blambda}\to\Gr(k,n)^2$ is
 the projection, then 
 \begin{equation}\label{Eq:2-fiber}
   \pi^{-1}(H_1,H_2)\ =\ 
      \prod_{i=1}^s \bigl(\Psi_{\lambda^i}H_1\cap \Psi_{\lambda^i}H_2\bigr)\,.
 \end{equation}
\end{lemma}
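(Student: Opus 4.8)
The plan is to unwind the definitions of the fiber product and of the sets $\Psi_\lambda H$, and to observe that the desired equality is essentially formal. Recall that $\calX_\blambda$ consists of tuples $(H,\Fdot^1,\dotsc,\Fdot^s)$ with $\Fdot^i\in\Fl(\lambda^i)$ and $H\in\Omega_{\lambda^i}\Fdot^i$ for each $i$, and that the map $f\colon\calX_\blambda\to\calY_\blambda$ forgets $H$ and remembers the flags. The fiber product $\calX_\blambda\times_{\calY_\blambda}\calX_\blambda$ therefore consists of pairs of such tuples sharing the \emph{same} underlying flags $(\Fdot^1,\dotsc,\Fdot^s)$; concretely, a point of $\calX^2_\blambda$ is a datum $(H_1,H_2,\Fdot^1,\dotsc,\Fdot^s)$ with $\Fdot^i\in\Fl(\lambda^i)$ and with both $H_1\in\Omega_{\lambda^i}\Fdot^i$ and $H_2\in\Omega_{\lambda^i}\Fdot^i$ for every $i$.

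First I would fix the point $(H_1,H_2)\in\Gr(k,n)^2$ and describe $\pi^{-1}(H_1,H_2)$ set-theoretically using this unwinding. Identifying the fiber with its image under projection to $\calY_\blambda$ (as the paper does elsewhere), a flag tuple $(\Fdot^1,\dotsc,\Fdot^s)\in\calY_\blambda=\prod_i\Fl(\lambda^i)$ lies in the fiber precisely when, for each $i$, the flag $\Fdot^i$ imposes the condition $\lambda^i$ on \emph{both} $H_1$ and $H_2$. By the very definition $\Psi_\lambda H=\{\Fdot\in\Fl(\lambda)\mid H\in\Omega_\lambda\Fdot\}$, the requirement that $\Fdot^i$ impose $\lambda^i$ on $H_1$ means $\Fdot^i\in\Psi_{\lambda^i}H_1$, and on $H_2$ means $\Fdot^i\in\Psi_{\lambda^i}H_2$. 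Hence the condition on $\Fdot^i$ is exactly $\Fdot^i\in\Psi_{\lambda^i}H_1\cap\Psi_{\lambda^i}H_2$.

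Since $\calY_\blambda$ is a product over $i$ and the condition on the tuple is imposed factor by factor with no interaction between distinct indices $i$, the fiber decomposes as the product over $i$ of the sets of admissible $\Fdot^i$. This yields
\begin{equation*}
  \pi^{-1}(H_1,H_2)\ =\ \prod_{i=1}^s\bigl(\Psi_{\lambda^i}H_1\cap\Psi_{\lambda^i}H_2\bigr)\,,
\end{equation*}
which is the claimed formula~\eqref{Eq:2-fiber}. The two inclusions can be verified directly: given a point of the left-hand side, projecting to each factor $\Fl(\lambda^i)$ produces a flag lying in the corresponding intersection; conversely, any tuple of flags from the product, together with $H_1$ and $H_2$, manifestly satisfies all the defining incidence conditions of $\calX^2_\blambda$ above $(H_1,H_2)$.

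This statement is genuinely bookkeeping rather than a substantive geometric argument, so I do not anticipate a real obstacle; the only point requiring care is the identification convention, namely that a fiber of $\pi$ is being identified with its image in $\calY_\blambda$ under the second projection $f$. I would state this convention explicitly (as the surrounding text already does) so that the equality is an honest identity of subsets of $\prod_i\Fl(\lambda^i)$ rather than merely a bijection. With that convention in place, the proof is the one-line unwinding above.
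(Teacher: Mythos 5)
Your proof is correct and follows essentially the same route as the paper's: unwind the definition of the fiber product, use the tautological equivalence $H\in\Omega_\lambda\Fdot\iff\Fdot\in\Psi_\lambda H$, and observe that the conditions decouple factor by factor, with the fiber identified with its image in $\calY_\blambda$. Your explicit remark about making that identification convention precise is a reasonable addition but does not change the substance.
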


As before, we are identifying $\pi^{-1}(H_1,H_2)$ with its projection to $\calY_{\blambda}$
in~\eqref{Eq:2-fiber}. 

\begin{proof}
 By the definition of fiber product, 
\[
  \calX^2_\blambda\ =\ 
   \{(H_1,H_2,\Fdot^1,\dotsc,\Fdot^s)\,\mid\,H_1,H_2\in\Omega_{\lambda^i}\Fdot^i\ 
      \mbox{ for }i=1,\dotsc,s\}\,.
\]
 Since $H\in\Omega_\lambda\Fdot$ if and only if $\Fdot\in \Psi_\lambda H$, we see that
\[
  \pi^{-1}(H_1,H_2)\ =\ \{(\Fdot^1,\dotsc,\Fdot^s)\,\mid\,
     \Fdot^i\in \Psi_{\lambda^i}H_1\cap \Psi_{\lambda^i}H_2\ 
      \mbox{ for }i=1,\dotsc,s\}\,,
\]
 which implies~\eqref{Eq:2-fiber}.
\end{proof}

\begin{remark}\label{R:dominant_components}
 A component $\calU$ of $\pi^{-1}(\calO_d)$ that projects dominantly to
 $\calY_{\blambda}$ must contain points $(H_1,H_2,\Fdot^1,\dotsc,\Fdot^s)$ where the flags 
 $(\Fdot^1,\dotsc,\Fdot^s)$ are in general position so that 
 $H_1,H_2\in\Omega^\circ_{\lambda^i}\Fdot^i$.
 This implies that $\Fdot^i\in \Psi^\circ_{\lambda^i}H_1\cap \Psi^\circ_{\lambda^i}H_2$.
 Consequently, in our arguments we may replace 
 $\Psi_\lambda H$ by any subset containing $\Psi^\circ_\lambda H$.
 \QED
\end{remark}

By Lemma~\ref{L:2-fiber}, we must first understand the set-theoretic
intersection $\Psi_\lambda H_1 \cap \Psi_\lambda H_2$ when 
$(H_1,H_2)\in\calO_d$.
We begin with the case when the partition is a special Schubert condition,
$\lambda=(a)$, abbreviated \defcolor{$a$}. 
Flags of type $a$ are elements $K$ of $\Gr(n{-}k{+}1{-}a,n)$.
For $H$ of dimension $k$ and $K$ of dimension $n{-}k{+}1{-}a$ write
$\Omega_aK\subset\Gr(k,n)$ and $\Psi_a H\subset\Gr(n{-}k{+}1{-}a,n)$ for the corresponding
special Schubert varieties,
 \begin{eqnarray*}
   \Omega_aK&=& \{H'\in\Gr(k,n)\,\mid\,H'\cap K\neq \{0\}\}\,,\ \ \mbox{ and}\\
   \Psi_a H&=&\{K'\in\Gr(n{-}k{+}1{-}a,n)\,\mid\,H\cap K'\neq \{0\}\}\,,
 \end{eqnarray*}
and note that $\Psi_a H=\Omega_a H$.
Set $\defcolor{N(a)}:=\dim\Gr(n{-}k{+}1{-}a,n)=(n{-}k{+}1{-}a)(k{-}1{+}a)$, so that
$\dim \Psi_a H=N(a)-a$.
We expect that $\dim \Psi_a H_1\cap \Psi_a H_2=N(a)-2a$.

\begin{lemma}
\label{Lem:SpecComponents}
 Let $(H_1,H_2)\in\calO_d$ with $d<k$.
 If $a{+}d\leq k$ or $d=0$ then the intersection $\Psi_{a}H_1 \cap \Psi_{a}H_2$ has dimension
 $N(a){-}2a$ and it is irreducible if $a{+}d<k$ or $d=0$.
 When $a{+}d\geq k$ and $d>0$, it consists of two components, one of dimension $N(a){-}2a$, 
 and the other of dimension $N(a){-}a{-}(k{-}d)$.
 This second, possibly excess, component consists of those $K$ with 
 $K\cap H_1\cap H_2\neq\{0\}$.   
\end{lemma}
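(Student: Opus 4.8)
The plan is to realize the intersection $\Psi_a H_1\cap\Psi_a H_2$ as the image of an incidence variety and to read off its components from a single fibration. Writing $m:=n{-}k{+}1{-}a$, so that $K$ ranges over $\Gr(m,n)$, $n{-}m=k{-}1{+}a$, and $N(a)=m(n{-}m)$, I would introduce
\[
  \Gamma\ :=\ \{(\ell_1,\ell_2,K)\in\P(H_1)\times\P(H_2)\times\Gr(m,n)\ :\ \ell_1,\ell_2\subset K\}\,,
\]
where each $\ell_i$ is a line (a point of $\P(H_i)$). Since $K\in\Psi_a H$ exactly when $K\cap H\neq\{0\}$, any $K\in\Psi_a H_1\cap\Psi_a H_2$ contains a line of $K\cap H_1$ and a line of $K\cap H_2$; hence the projection $\rho\colon\Gamma\to\Gr(m,n)$ surjects onto $\Psi_a H_1\cap\Psi_a H_2$, and it suffices to describe the components of $\Gamma$ and their images.

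First I would project $\Gamma$ to $\P(H_1)\times\P(H_2)$, which is irreducible of dimension $2(k{-}1)$. Over the open dense locus $\{\ell_1\neq\ell_2\}$ the fibre is $\{K\supset\langle\ell_1,\ell_2\rangle\}\simeq\Gr(m{-}2,n{-}2)$, producing an irreducible variety $\Gamma^{\mathrm{gen}}$ of dimension $2(k{-}1)+(m{-}2)(n{-}m)=N(a){-}2a$. The complementary locus $\{\ell_1=\ell_2\}$ is nonempty precisely when $d>0$, equals $\P(H_1\cap H_2)$, and carries fibre $\Gr(m{-}1,n{-}1)$, giving an irreducible $\Gamma^{\mathrm{exc}}$ of dimension $(d{-}1)+(m{-}1)(n{-}m)=N(a){-}a{-}(k{-}d)$. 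Thus $\Gamma=\overline{\Gamma^{\mathrm{gen}}}\cup\Gamma^{\mathrm{exc}}$, whence $\Psi_a H_1\cap\Psi_a H_2=\mathcal{B}\cup\mathcal{A}$ with $\mathcal{B}:=\rho(\overline{\Gamma^{\mathrm{gen}}})$ and $\mathcal{A}:=\rho(\Gamma^{\mathrm{exc}})=\{K:K\cap H_1\cap H_2\neq\{0\}\}$. A short expected-dimension count shows $\rho$ is generically injective on each piece: a generic $K\in\mathcal{B}$ meets each $H_i$ in a single line, and a generic $K\in\mathcal{A}$ meets $H_1\cap H_2$ in a single line. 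Hence $\dim\mathcal{B}=N(a){-}2a$ and $\dim\mathcal{A}=N(a){-}a{-}(k{-}d)$. When $d=0$ we have $\Gamma^{\mathrm{exc}}=\emptyset$, so $\Psi_a H_1\cap\Psi_a H_2=\mathcal{B}$ is irreducible of dimension $N(a){-}2a$, settling that case.

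For $d>0$ I would compare the two pieces via $\dim\mathcal{A}-\dim\mathcal{B}=a{+}d{-}k$. If $a{+}d>k$ then $\mathcal{A}$ is the larger, genuinely excess component; if $a{+}d=k$ the two have equal dimension $N(a){-}2a$. In both subcases they are distinct, since a generic $K\in\mathcal{B}$ satisfies $K\cap H_1\cap H_2=\{0\}$ (the expected dimension $d{+}1{-}k{-}a$ is negative because $a\ge1$ and $d\le k{-}1$), so $\mathcal{B}\not\subset\mathcal{A}$, while $\mathcal{A}\not\subset\mathcal{B}$ by dimension; hence there are exactly two components, as claimed. The remaining case $a{+}d<k$ requires showing $\mathcal{A}\subset\mathcal{B}$, which is the crux.

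To prove $\mathcal{A}\subset\mathcal{B}$ when $a{+}d<k$ I would exploit the extra room available here. For a generic $K\in\mathcal{A}$, with $\langle v_0\rangle=K\cap H_1\cap H_2$, the expected-dimension count gives $\dim K\cap(H_1{+}H_2)=k{+}1{-}a{-}d\ge2$, so I may choose $w\in(K\cap(H_1{+}H_2))\setminus\langle v_0\rangle$ and write $w=p{-}q$ with $p\in H_1$, $q\in H_2$. Setting $\ell_1(t)=\langle v_0{+}tp\rangle\subset H_1$, $\ell_2(t)=\langle v_0{+}tq\rangle\subset H_2$, and $K(t)=\langle v_0{+}tp,\,w\rangle+K''$, where $K=\langle v_0,w\rangle\oplus K''$, gives for small $t\neq0$ distinct lines $\ell_1(t)\neq\ell_2(t)$ with $\ell_1(t),\ell_2(t)\subset K(t)$ (note $v_0{+}tq=(v_0{+}tp){-}tw$), while $K(t)\to K$ as $t\to0$. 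Hence $(\ell_1(t),\ell_2(t),K(t))\in\Gamma^{\mathrm{gen}}$ limits to a point over $K$, so $K\in\mathcal{B}$; as this holds on a dense subset of the irreducible $\mathcal{A}$ and $\mathcal{B}$ is closed, $\mathcal{A}\subset\mathcal{B}$ and the intersection equals $\mathcal{B}$, irreducible of dimension $N(a){-}2a$. The main obstacle is precisely this last step: pinning down the numerical threshold $a{+}d<k$ at which the excess locus is reabsorbed into the principal component and producing the explicit one-parameter degeneration realizing this reabsorption; by comparison, the dimension bookkeeping in the other cases is routine.
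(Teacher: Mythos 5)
Your proposal is correct, and its skeleton largely coincides with the paper's proof: the paper likewise splits the intersection according to whether $K$ meets $L:=H_1\cap H_2$, writing it as a disjoint union $U_0\sqcup U_1$ (your $\mathcal{B}$ and $\mathcal{A}$ are exactly $\overline{U_0}$ and $U_1$), and computes both dimensions via incidence varieties of marked lines, fibered in Grassmannians over $\P(H_1)\times\P(H_2)$ and over $\P(L)$ respectively, obtaining the same numbers $N(a)-2a$ and $N(a)-a-(k-d)$. Where you genuinely diverge is at the crux you identified: showing that for $d>0$ and $a+d<k$ the excess locus is absorbed into the principal component. The paper dispatches this in one line --- since $\dim U_1<N(a)-2a$, ``$U_1$ lies in the closure of $U_0$'' --- which implicitly rests on the projective dimension theorem: every irreducible component of an intersection of two codimension-$a$ subvarieties of the smooth variety $\Gr(n{-}k{+}1{-}a,n)$ has dimension at least $N(a)-2a$, so the small locus $U_1$ cannot carry a component and must lie in $\overline{U_0}$. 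Your explicit one-parameter degeneration $K(t)=\langle v_0+tp,\,w\rangle+K''$ replaces that appeal to intersection theory with a constructive argument; it is longer, but it is self-contained and explains the numerical threshold geometrically (the construction needs $\dim K\cap(H_1{+}H_2)=k{+}1{-}a{-}d\geq 2$, which is precisely $a+d<k$). The trade-off is that your route leans on several genericity assertions stated via expected-dimension counts (generic injectivity of $\rho$ on each piece, and that a generic $K\in\mathcal{B}$ has $K\cap L=\{0\}$); these are true and are verified by checking them at a generic point of the fibration $\Gamma^{\mathrm{gen}}$, and they sit at roughly the same level of rigor as the paper's unproved ``projects birationally'' claims, so I do not regard them as gaps.
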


\begin{proof}
 Set $\defcolor{L}:=H_1\cap H_2$, which is $d$-dimensional.
 Since $\dim \langle H_1,H_2\rangle= 2k{-}d\leq n$, we have that 
 $\max\{0,2k{-}n\}\leq d(<k)$.
 The intersection is a disjoint union
 \begin{equation}\label{Eq:special_decomp}
   \Psi_a H_1\cap \Psi_a H_2\ =\ U_0 \sqcup U_1\,,
 \end{equation}
 where \defcolor{$U_0$} consists of those $K\in \Psi_a H_1\cap \Psi_a H_2$ with 
 $K\cap L=\{0\}$ and    
 \defcolor{$U_1$} consists of those $K$ with $\dim(K\cap L)\geq 1$.
 We compute the dimensions of $U_0$ and $U_1$ by studying incidence varieties which map
 birationally to each.

 Set 
 $\defcolor{\widetilde{U}_0}:=\{(K,h_1,h_2)\,\mid\, K\in U_0, h_i\subset K\cap H_i, \dim h_i=1\}$.
 This projects birationally to $U_0$, and its image in the last two factors 
 $\widetilde{U}_0\to\P(H_1)\times \P(H_2)$ is the open subset 
\[
    \bigl(\P(H_1)\smallsetminus \P(L)\bigr)\times
    \bigl(\P(H_2)\smallsetminus \P(L)\bigr)
\]
 with fiber over $(h_1,h_2)$ those $K$ which contain 
 the 2-dimensional linear span $\langle h_1,h_2\rangle$ but do not meet
 $L$.
 This is a Zariski open subset of the Grassmannian
\[
\Gr(n{-}k{-}a{-}1,\K^n/\langle h_1,h_2\rangle)\simeq\Gr(n{-}k{-}a{-}1,n{-}2)\,.
\]
 Thus the dimension of $\widetilde{U}_0$ (and thus of $U_0$) is 
 \begin{multline*}
  \qquad
  \dim\P(H_1)+\dim\P(H_2) + \dim \Gr(n{-}k{-}a{-}1,n{-}2)\\ =\ 
   2(k{-}1)+(n{-}k{-}a{-}1)(a{+}k{-}1)\ =\ 
   N(a){-}2a\,.
  \qquad
 \end{multline*}

 For $U_1$, define 
 $\defcolor{\widetilde{U}_1}:=\{(K,\ell)\,\mid\, K\in U_1,\ell\subset K\cap L,\dim\ell=1\}$.
 This projects birationally to $U_1$.
 The second projection 
 $\widetilde{U}_1\to\P(L)$ is surjective with fiber over $\ell\in\P(L)$
 the set of those $K$ which contain $\ell$, which is
 $\Gr(n{-}k{-}a,\K^n/\ell)\simeq\Gr(n{-}k{-}a,n{-}1)$.
 Thus the dimension of $\widetilde{U}_1$ (and thus of $U_1$) is 
\[
  \dim\P(L)+\dim \Gr(n{-}k{-}a,n{-}1)\ =\ 
   d{-}1+(n{-}k{-}a)(a{+}k{-}1) \ =\ 
   N(a){-}a{-}(k{-}d)\,.
\]
  Note that $\dim U_0=N(a)-2a$, the expected dimension of 
  $\Psi_a H_1\cap \Psi_a H_2$.
  When $d=0$, $U_1=\emptyset$, and this intersection is irreducible.
  If $a+d<k$, then $\dim U_1<N(a)-2a$ and so $U_1$ lies in the closure of $U_0$.
  When $a+d\geq k$, $\dim U_1\geq \dim U_0$, and so its closure is a component of the
  intersection, which has the expected dimension $N(a)-2a$ only when $a+d=k$.
\end{proof}

We introduce notation for the sets $U_0$ and $U_1$ in the
decomposition~\eqref{Eq:special_decomp}.
Write \defcolor{$U_0(a,H_1,H_2)$} for the set $U_0$ and 
\defcolor{$U_1(a,H_1,H_2)$} for the set $U_1$.

We now consider intersections $\Psi^\circ_{\lambda}H_1\cap \Psi^\circ_{\lambda}H_2$, where 
$\lambda$ is any Schubert condition.
Let $(H_1,H_2)\in\calO_d\subset\Gr(k,n)^2$ with $d<k$ and set $L:=H_1\cap H_2$, so that
$\dim L=d$.
Since we assume that our Schubert problem is reduced, we will have $\lambda_1<n{-}k$ and 
$\lambda_m>0=\lambda_{m+1}$ for some $m<k$.  
We decompose $\Psi_{\lambda}^\circ H_1 \cap \Psi_{\lambda}^\circ H_2 $ into $2^m$ disjoint
subsets and then compute the dimension of those that are nonempty.

\begin{definition}\label{def:U_S}
  For $S \subset [m]$, define $\defcolor{U_S}=\defcolor{U_S(\lambda,H_1,H_2)}$ to be 
\[
  \{\Fdot\in \Psi^\circ_{\lambda}H_1\cap \Psi^\circ_{\lambda}H_2\ :\
           \dim F_{n-k+j-\lambda_j} \cap L = |[j] \cap S|\,,\ j=1,\dotsc,m \}\,.
\]
  Recall that $[j]:=\{1,\dotsc,j\}$.
  Observe that we must have $|S|\leq d=\dim L$, and that 
\[
    \Psi^\circ_{\lambda}H_1\cap \Psi^\circ_{\lambda}H_2\ =\ 
    \bigsqcup_{S} U_S(\lambda,H_1,H_2)\,.
\]
 Indeed, for each $j=1,\dotsc,m$ and $i=1,2$, we have $\dim F_{n-k+j-\lambda_j}\cap H_i=j$, so
 that $\dim F_{n-k+j-\lambda_j}\cap L\leq j$.
 Furthermore, if $M_j:=F_{n-k+j-\lambda_j}\cap L$, then 
 $M_1\subset M_2\subset\dotsb\subset M_m$. 
 All possibilities for the dimensions of $M_j$ are given by $|[j]\cap S|$ for subsets $S$
 of $[m]$ of cardinality at most $d$.
\end{definition}

\begin{lemma} \label{Lem:Components}
 When $U_S \neq \emptyset$, the set $U_S$ is irreducible of dimension 
 \[
   N(\lambda) - 2|\lambda| + 
     \sum_{j \in S} \left(\lambda_j - k + d + |[j] \smallsetminus S|\right)\,.
 \]
\end{lemma}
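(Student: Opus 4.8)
The plan is to realize $U_S$ as an incidence variety fibered over a product of flag varieties, generalizing the proof of Lemma~\ref{Lem:SpecComponents}, which is exactly the case $m=1$. Write $e_j:=n{-}k{+}j{-}\lambda_j$, so that a flag $\Fdot\in\Fl(\lambda)$ is $F_{e_1}\subset\dotsb\subset F_{e_m}$ and membership in $\Psi^\circ_\lambda H_i$ reads $\dim F_{e_j}\cap H_i=j$. A flag $\Fdot\in U_S$ then canonically determines three nested flags, $A_j:=F_{e_j}\cap H_1$ in $H_1$, $B_j:=F_{e_j}\cap H_2$ in $H_2$, and $M_j:=F_{e_j}\cap L$ in $L$, with $\dim A_j=\dim B_j=j$, $\dim M_j=|[j]\cap S|$, and $M_j=A_j\cap B_j=A_j\cap L$ (since $L=H_1\cap H_2$). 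The defining role of $S$ is precisely that $M_\bullet$ gains a dimension exactly at the indices $j\in S$. The idea is to recover $\Fdot$ from this flag data together with the extra ``padding'' directions of each $F_{e_j}$, and to show that the flag data and the padding each vary in an irreducible family.

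First I would build the flag data one level at a time. To pass from level $j{-}1$ to level $j$: if $j\in S$, choose a new line $\ell_j\subset L$ extending $M_{j-1}$, which forces $A_j=A_{j-1}+\ell_j$ and $B_j=B_{j-1}+\ell_j$ (one checks $\ell_j\in A_j\cap B_j$ is unavoidable); if $j\notin S$, independently choose new lines $a_j\subset H_1$ and $b_j\subset H_2$ lying off $L$. These choices range over open subsets of projective spaces, of dimension $d-|[j-1]\cap S|-1$ when $j\in S$ and $2(k{-}j)$ when $j\notin S$; the compatibility $A_j\cap B_j=M_j$ is an open condition, nonempty because $U_S\neq\emptyset$. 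Setting $G_j:=A_j+B_j$, with $\dim G_j=2j-|[j]\cap S|$, I then complete the data to a flag with $G_j\subseteq F_{e_j}$ and each $F_{e_j}$ in general position relative to $H_1,H_2,L$. Realized as a tower, every fiber is an open subset of a Grassmannian, so the total space is irreducible; since the flags $A_\bullet,B_\bullet,M_\bullet$ are recovered from $\Fdot$, this total space maps isomorphically onto $U_S$, giving the irreducibility.

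For the dimension I would use the identity $n-e_j=k+\lambda_j-j$, which yields $N(\lambda)=\sum_{j=1}^m(e_j-e_{j-1})(n-e_j)$ with $e_0=0$. Adding the contributions of the flag-data choices and of the padding and simplifying with this identity produces $N(\lambda)-2|\lambda|$, together with the correction $\lambda_j-k+d+|[j]\smallsetminus S|$ for each $j\in S$, which is the claimed value. As a sanity check, for $m=1$ this recovers exactly the two dimensions $N(a)-2a$ (for $S=\emptyset$) and $N(a)-a-(k{-}d)$ (for $S=\{1\}$) of Lemma~\ref{Lem:SpecComponents}.

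The main obstacle is the completion step, because $\Psi^\circ_\lambda H_1$ and $\Psi^\circ_\lambda H_2$ are far from being in general position, and the construction degenerates exactly when consecutive parts of $\lambda$ coincide, i.e. when $e_j-e_{j-1}=1$ with $j\notin S$: there is then no room to introduce both new directions $a_j,b_j$ above a fixed $F_{e_{j-1}}$, and indeed a naive bottom-up, level-by-level fibration can have empty generic fibers even though $U_S$ is nonempty. The remedy is to decouple the flag data from the padding and to carry out the padding in decreasing order of dimension: choose $F_{e_m}$ first, then carve each $F_{e_{j-1}}$ out of the already-constructed $F_{e_j}$ as a subspace containing $G_{j-1}$ and avoiding the new directions introduced at level $j$, all of which are open conditions. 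This exhibits each fiber as a nonempty open subset of a Grassmannian, so both irreducibility and the dimension count go through; checking that these open conditions are nonempty is precisely where the hypothesis $U_S\neq\emptyset$ enters.
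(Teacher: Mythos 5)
Your proposal is correct, and its skeleton coincides with the paper's own proof: both fiber $U_S$ over the space of trace data on $H_1$, $H_2$, and $L$ (your $A_\bullet,B_\bullet,M_\bullet$ are the paper's $\hdot^1,\hdot^2,\eldot$, forming its base $B_S$), with fiber the choice of ambient flag containing the spans $G_j=\Lambda_j:=\langle A_j,B_j\rangle$, and your counts for the trace data agree term by term with the paper's (your $\sum_{j\in S}(d-|[j-1]\cap S|-1)$ is its $\dim\Fl(1,\dotsc,|S|;L)$, and your $\sum_{j\notin S}2(k-j)$ is its fiber $\rho^{-1}(\eldot)$). The genuine divergence is in the padding step, and it is substantive, not stylistic. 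The paper disposes of that step by citing Lemma~\ref{Lem:Xeldot}, whose proof is exactly the bottom-up induction you warn against. Writing $e_j=n-k+j-\lambda_j$ as you do, whenever $\lambda_{j-1}=\lambda_j$ and $j\notin S$ one has $e_j-e_{j-1}=1<2=\dim\Lambda_j-\dim\Lambda_{j-1}$, so every $\Fdot\in U_S$ satisfies $\dim F_{e_{j-1}}\cap\Lambda_j>\dim\Lambda_{j-1}$, and the locus over which the bottom-up induction computes its fibers carries none of $U_S$; indeed, in these cases the nonemptiness criterion~\eqref{Eq:empty} of Lemma~\ref{Lem:Xeldot} fails even though $X(\Lamdot)\neq\emptyset$ (the correct criterion is $a_j\geq b_j$ for all $j$). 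Your top-down repair---choose $F_{e_m}$ first, then carve each $F_{e_{j-1}}$ out of $F_{e_j}$ as a subspace containing $G_{j-1}$ and meeting $A_j$, $B_j$, $M_j$ only in $A_{j-1}$, $B_{j-1}$, $M_{j-1}$---is the right fix: it exhibits every fiber as a nonempty open subset of a Grassmannian, giving irreducibility and the dimension simultaneously, and since the top-down count agrees identically (by an Abel-summation identity) with the displayed formula~\eqref{Eq:dimension}, it also explains why the paper's numerical answer is correct despite the gap in its auxiliary lemma. Two small points: when $j\notin S$ your new lines $a_j$, $b_j$ must avoid $A_{j-1}+L$ and $B_{j-1}+L$, not merely $L$ (still an open dense condition, so the count is unchanged); and you assert rather than execute the closing arithmetic, but given your $m=1$ check against Lemma~\ref{Lem:SpecComponents} that step is routine.
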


\begin{proof}
 The intersections of a flag $\Fdot\in U_S$ with  $H_1$, $H_2$, and $L$ are flags 
 $\hdot^1, \hdot^2, \eldot$ in these spaces with prescribed incidences.
 Specifically, for each $i=1,2$ and $j=1,\dotsc,m$, we have 
\[
    h^i_j\ :=\ F_{n-k+j-\lambda_j}\cap H_i
   \qquad\mbox{and}\qquad
   h^1_j \cap L\ =\ 
   h^2_j \cap L\ =\ 
   \ell_{|[j] \cap S|}\ :=\
   F_{n-k+j-\lambda_j} \cap L\,.
\]
 Then we have 
\[
   \hdot^1\in\Fl(1, \ldots, m, H_1)\,,\ 
   \hdot^2\in\Fl(1, \ldots, m, H_2)\,,\ \mbox{and}\ 
   \eldot\in\Fl(1,\ldots,|S|,L)\,.
\]
 The flags $\hdot^i\subset H_i$ instantiate the conditions for $\Fdot$ to belong to 
 $\Psi_{\lambda}^\circ H_i $ and the flag $\eldot$ measures how the flags $\hdot^1$,
 $\hdot^2$, and $\Fdot$ meet $L=H_1\cap H_2$.

 Let \defcolor{$B_S$} consist of triples $(\hdot^1, \hdot^2,\eldot)$ of flags, where
\[
   \hdot^i\in\Fl(1, \ldots, m; H_i)\,,\ \mbox{for}\ i=1,2\,,\ \ 
   \eldot\in\Fl(1,\ldots,|S|;L)\,,\ \mbox{ and }\  
   h^i_j \cap L\ =\ \ell_{|[j] \cap S|}\,.
\]
 Mapping $\Fdot\in U_S$ to its intersections $(\hdot^1,\hdot^2,\eldot)$ with $H_1$,
 $H_2$, and $L$ gives a projection $\varphi\colon U_S \to B_S$. 
 We compute the dimension of the fiber over $(\hdot^1, \hdot^2, \eldot) \in B_S$. 

 Set $\defcolor{\Lambda_j}:= \langle h^1_j, h^2_j \rangle$, which has dimension 
 $2j-|[j]\cap S|$.
 Then the fiber $\varphi^{-1}(\hdot^1, \hdot^2,\eldot)$ is identified with its projection
 to $\Fl(\lambda,V)$ which is an open subset of the Schubert variety $X(\Lamdot)$ of
 Lemma~\ref{Lem:Xeldot}.  
 In the notation of that lemma, $a_j=n{-}k{+}j{-}\lambda_j$ and 
 $b_j=2j-|[j]\cap S|$.
 Then the fiber $\pi^{-1}(\hdot^1, \hdot^2,\eldot)$ has dimension
 \begin{eqnarray*}
   N(\lambda)\ -\ \sum_{j=1}^m (k{-}j{+}\lambda_j)(b_j{-}b_{j-1})
   &=&
   N(\lambda)\ -\ 2\sum_{j=1}^m (k{-}j{+}\lambda_j)
             \ +\ \sum_{j\in S} (k{-}j{+}\lambda_j)\\
   &=&
   N(\lambda)\ -\ 2|\lambda|\ -\ 2\sum_{j=1}^m (k{-}j)
             \ +\ \sum_{j\in S} (k{-}j{+}\lambda_j)\,,
 \end{eqnarray*}
 as $b_{j}=b_{j-1}+1$ if $j\in S$ and $b_{j}=b_{j-1}+2$ if $j\not\in S$.

We compute the dimension of $B_S$. 
Let $\defcolor{\rho}\colon B_S \to \Fl(1,\ldots, |S|; L)$ be the projection forgetting
$\hdot^1$ and $\hdot^2$. 
For $\eldot \in \Fl(1, \ldots, |S|; L)$, the fiber $\rho^{-1}(\eldot)$ is an
open dense subset of
\[
   X(\eldot)\times X(\eldot)\ \subset\ 
  \Fl(1,\ldots, m; H_1)\times \Fl(1,\ldots, m; H_2)\,,
\]
where $X(\eldot)$ is the  Schubert variety of Lemma~\ref{Lem:Xeldot}.
Since $\Fl(1,\ldots, m; H_i)$ has dimension $N(1,\dotsc,m)=\sum_{j=1}^m (k-j)$
and in the notation of Lemma~\ref{Lem:Xeldot}, $a_j=j$ and $b_j=|[j]\cap S|$, this 
fiber $\rho^{-1}(\eldot)$ has dimension  
 \[
   2\bigl( N(1,\dotsc,m)\ -\ \sum_{j=1}^m(k-j)(b_j-b_{j-1})\bigr)\ =\ 
   2\sum_{j=1}^m (k-j)\ -\ 2\sum_{j\in S}(k-j)\,.
\]
Finally, as $\Fl(1,\ldots, |S|; L)$ has dimension
\[
  \sum_{i=1}^{|S|} (d-i)\ =\ 
  \sum_{j\in S} (d - |[j] \cap S|)\ =\ 
  \sum_{j\in S} (d-j+|[j]\smallsetminus S|)\,,
\]
 we have
\[
   \dim U_S\ =\ 
  N(\lambda) - 2|\lambda| + \sum_{j \in S} (\lambda_j - k + d + |[j] \smallsetminus S|)\,.
\]
Since all sets and fibers that we considered are irreducible, $U_S$ is irreducible.
\end{proof}

The decomposition of $\Psi^\circ_\lambda H_1\cap \Psi^\circ_\lambda H_2$ in
Definition~\ref{def:U_S} as a disjoint union of sets $U_S(\lambda,H_1,H_2)$ gives a
decomposition of the 
product of the intersections $\Psi^\circ_{\lambda^i}H_1\cap \Psi^\circ_{\lambda^i}H_2$.
For each $i=1,\dotsc,s$, let $m_i$ be the index of the last nonzero part of $\lambda^i$.
For each $d=0,\dotsc,k{-}1$ and each sequence $\bS=(S^1,\dotsc,S^s)$ where $S^i$ is a
subset of $[m_i]$ of cardinality at most $d$, define 
\[
   \defcolor{C_{\bS}(H_1,H_2)}\ :=\ 
   \prod_{i=1}^s U_{S^i}(\lambda^i,H_1,H_2)\,.
\]
We write the product of the intersections
$\Psi^\circ_{\lambda^i}H_1\cap \Psi^\circ_{\lambda^i}H_2$ as a disjoint union
\[
   \prod_{i=1}^s \Psi^\circ_{\lambda^i}H_1\cap \Psi^\circ_{\lambda^i}H_2
    \ =\ 
   \bigsqcup_{\bS}C_{\bS}(H_1,H_2)\,,
\]
where $\bS$ ranges over all such sequences of subsets.

For each choice of $\bS$, the sets $C_{\bS}(H_1,H_2)$ for $(H_1,H_2)\in\calO_d$ form
a fiber bundle over $\calO_d$, which we write as \defcolor{$\calC_{\bS,d}$}.
The following proposition is clear.

\begin{proposition}\label{P:XtwoComponents}
  $\calX^{(2)}_{\blambda}$ is a subset of the closure of\/ 
 ${\displaystyle \bigsqcup_{\bS,d}\calC_{\bS,d}}$.
\end{proposition}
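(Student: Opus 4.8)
The plan is to verify the inclusion one irreducible component at a time. Recall that $\calX^{(2)}_{\blambda}$ is by definition the union of those irreducible components $\calU$ of $\calX^2_{\blambda}$ that project dominantly onto $\calY_{\blambda}$, so it suffices to show that each such $\calU$ lies in $\overline{\bigsqcup_{\bS,d}\calC_{\bS,d}}$. Since $\calU$ is irreducible, it is enough to exhibit a dense open subset $\calU^\circ\subset\calU$ contained in a single $\calC_{\bS,d}$; then $\calU=\overline{\calU^\circ}\subset\overline{\calC_{\bS,d}}$, and the proposition follows by taking the union over all such $\calU$.

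First I would pin down the stratum $d$. Composing with $\pi\colon\calX^2_{\blambda}\to\Gr(k,n)^2$ and recalling that $\calX^2_{\blambda}$ is the Zariski closure of the fiber product with the diagonal removed, the generic point of $\calU$ has $H_1\neq H_2$. The function $(H_1,H_2)\mapsto\dim H_1\cap H_2$ is upper semicontinuous, so it takes a constant minimal value $d<k$ on a dense open subset of the irreducible image $\pi(\calU)$, and hence $\pi$ carries a dense open subset of $\calU$ into the single stratum $\calO_d$. Next, because $\calU$ projects dominantly to $\calY_{\blambda}$, Remark~\ref{R:dominant_components} applies: a dense open subset of $\calU$ consists of points $(H_1,H_2,\Fdot^1,\dotsc,\Fdot^s)$ whose flags are in general position, so that $\Fdot^i\in\Psi^\circ_{\lambda^i}H_1\cap\Psi^\circ_{\lambda^i}H_2$ for every $i$. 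Intersecting these two dense open conditions yields a dense open $\calU^\circ\subset\calU$ lying in $\pi^{-1}(\calO_d)$ and consisting entirely of points with flags in the open Schubert cells.

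It then remains to fix $\bS$. Over $\calO_d$, Definition~\ref{def:U_S} together with the discussion following Lemma~\ref{Lem:Components} expresses the locus of points with flags in the open cells as the disjoint union $\bigsqcup_{\bS}\calC_{\bS,d}$, where $\bS=(S^1,\dotsc,S^s)$ ranges over sequences of subsets $S^i\subset[m_i]$ with $|S^i|\le d$. As the constituent sets $C_{\bS}(H_1,H_2)$ are cut out by the locally closed conditions $\dim F_{n-k+j-\lambda^i_j}\cap(H_1\cap H_2)=|[j]\cap S^i|$, they are pairwise disjoint, and the irreducible set $\calU^\circ$ meets this disjoint union, so its generic point lies in exactly one piece $\calC_{\bS,d}$. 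Shrinking $\calU^\circ$ if necessary gives $\calU^\circ\subset\calC_{\bS,d}$, whence $\calU\subset\overline{\calC_{\bS,d}}$, as required.

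I expect the only delicate point to be the bookkeeping guaranteeing that the generic point of $\calU$ lands in a single stratum and a single cell at once; this is precisely why the decomposition into the locally closed sets $\calO_d$ and $U_{S}$ was arranged, and it is what makes the proposition "clear." No new geometry is needed: the argument is an assembly of Remark~\ref{R:dominant_components}, upper semicontinuity of the intersection dimension, and the disjointness of the $C_{\bS}$ decomposition established through Lemma~\ref{Lem:Components}.
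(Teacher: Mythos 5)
Your proof is correct, and it is exactly the argument the paper has in mind: the paper states this proposition without proof ("The following proposition is clear"), precisely because the needed ingredients — Remark~\ref{R:dominant_components}, the stratification by the sets $\calO_d$, and the disjoint decomposition into the sets $U_S$ of Definition~\ref{def:U_S} — were set up in advance so that the inclusion follows by passing to a dense open subset of each dominant component and taking closures. Your write-up supplies the semicontinuity and genericity bookkeeping that the paper leaves implicit, with no gaps.
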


Only sets $\calC_{\bS,d}$ of dimension equal to that of $\calY_{\blambda}$ could 
map dominantly to $\calY_{\blambda}$ with finite fibers over an open subset, and so it 
is a necessary condition that 
\[
   \dim \calC_{\bS,d}\ =\ \dim \calY_{\blambda}
\]
for the closure of $\calC_{\bS,d}$ to be a component of $\calX^{(2)}_{\blambda}$.
We extract a numerical condition which is equivalent to this dimension condition.

\begin{lemma}
\label{Lem:MainLemma}
 Let $\blambda$ be a Schubert problem on $\Gr(k,n)$ with at least two solutions, 
 $d$ an integer between $0$ and $k{-}1$, 
 and $\bS=(S^1,\dotsc,S^s)$ with $S^i\subset[m]$ and $|S^i|\leq d$.
 Then $\dim \calC_{\bS, d} = \dim \calY_{\blambda}$ if and only if 
\begin{equation}
\label{Eq:Excess}
  \sum_{i=1}^s \sum_{j \in S^i} (\lambda^i_j - k + d + |[j] \smallsetminus S^i|)
    \  =\  d(n-2k+d) \,.
\end{equation}
\end{lemma}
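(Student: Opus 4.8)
The plan is to turn the equality $\dim\calC_{\bS,d}=\dim\calY_{\blambda}$ into the numerical identity~\eqref{Eq:Excess} by a single dimension count. Since the sets $C_{\bS}(H_1,H_2)=\prod_{i=1}^s U_{S^i}(\lambda^i,H_1,H_2)$ form a fiber bundle $\calC_{\bS,d}$ over $\calO_d$, we have
\[
  \dim\calC_{\bS,d}\ =\ \dim\calO_d\ +\ \sum_{i=1}^s \dim U_{S^i}(\lambda^i,H_1,H_2)\,,
\]
and Lemma~\ref{Lem:Components} evaluates each summand, assuming each $U_{S^i}$ is nonempty (otherwise $\calC_{\bS,d}=\emptyset$, the only case of interest being the nonempty one, as empty sets contribute no component to $\calX^{(2)}_{\blambda}$). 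Using $\dim\calY_{\blambda}=\sum_{i=1}^s N(\lambda^i)$, the terms $N(\lambda^i)$ cancel from the two sides, and recalling that a Schubert problem satisfies $\sum_{i=1}^s|\lambda^i|=k(n{-}k)$, the equality $\dim\calC_{\bS,d}=\dim\calY_{\blambda}$ becomes equivalent to
\[
  \dim\calO_d\ -\ 2k(n{-}k)\ +\
  \sum_{i=1}^s\sum_{j\in S^i}\bigl(\lambda^i_j-k+d+|[j]\smallsetminus S^i|\bigr)\ =\ 0\,.
\]

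The one genuinely geometric input is $\dim\calO_d$, which I would compute by parametrizing a pair $(H_1,H_2)$ with $\dim H_1\cap H_2=d$: choose $H_1\in\Gr(k,n)$ freely, then the intersection $L:=H_1\cap H_2\in\Gr(d,H_1)$, and finally $H_2\supset L$ meeting $H_1$ in exactly $L$, that is, $H_2/L$ a $(k{-}d)$-plane in $V/L$ in general position with respect to $H_1/L$. Since the condition $H_2/L\cap H_1/L=\{0\}$ is open, this yields
\[
  \dim\calO_d\ =\ k(n{-}k)\ +\ d(k{-}d)\ +\ (k{-}d)(n{-}k)\ =\ (n{-}k)(2k{-}d)+d(k{-}d)\,.
\]

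Substituting this and simplifying gives $\dim\calO_d-2k(n{-}k)=-(n{-}k)d+d(k{-}d)=-d(n-2k+d)$, so the displayed equivalence becomes exactly~\eqref{Eq:Excess}, completing the proof. The remainder past $\dim\calO_d$ is purely algebraic bookkeeping driven by the constraint $\sum_i|\lambda^i|=k(n{-}k)$, so I expect the main obstacle to be nothing more than pinning down $\dim\calO_d$ correctly; the only subtlety there is the genericity clause in the parametrization (that a generic $H_2/L$ meets $H_1/L$ trivially, so the open condition does not drop the dimension), which is what guarantees $\calO_d$ has the stated dimension rather than something smaller.
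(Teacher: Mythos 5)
Your proposal is correct and follows essentially the same route as the paper: apply Lemma~\ref{Lem:Components} to each factor $U_{S^i}$, add the dimensions over the fiber bundle structure on $\calC_{\bS,d}$ over $\calO_d$, and compare with $\dim\calY_{\blambda}=N(\blambda)$. The only difference is that the paper simply asserts $\dim\calO_d = 2k(n{-}k)-d(n{-}2k{+}d)$, whereas you derive it by the parametrization $(H_1, L, H_2/L)$; your value $(n{-}k)(2k{-}d)+d(k{-}d)$ agrees with the paper's formula, so the two arguments coincide.
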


\begin{proof}
 By Lemma~\ref{Lem:Components},  $C_{\bS, d}$ has dimension 
 \begin{multline*}
  \qquad
   \sum_{i=1}^s \bigl(N(\lambda^i) - 2|\lambda^i| 
   \ +\  \sum_{j \in S^i} (\lambda^i_j - k + d + |[j] \smallsetminus S^i|)\bigr)\\
   \ =\ N(\blambda) - 2k(n-k)\ +\ 
   \sum_{i=1}^s\sum_{j \in S^i} (\lambda^i_j - k + d + |[j] \smallsetminus S^i|)\,.
  \qquad
 \end{multline*}
Since $\dim \calC_{\bS, d} = \dim C_{\bS, d} + \dim \calO_d$, and 
$\dim \calO_d = 2k(n{-}k) - d(n{-}2k{+}d)$, we have 
\[
   \dim \calC_{\bS, d}\ = \
    N(\blambda)\ -\ d(n-2k+d)\ +\ 
   \sum_{i=1}^s\sum_{j \in S^i} (\lambda^i_j - k + d + |[j] \smallsetminus S^i|)\,.
\]
Since $\dim \calY_{\blambda} = N(\blambda)$, the result follows.
\end{proof}

%
\section{Double transitivity}
\label{S:MR}

We use Lemma~\ref{Lem:MainLemma} to show that if a Schubert problem
$\blambda$ on $\Gr(k,n)$ is either special or if $k\leq 3$, then 
its Galois group $\calG(\blambda)$ is doubly transitive.

%
\subsection{Special Schubert problems}

A \demph{special Schubert problem} is a Schubert problem all of whose conditions are
special, that is, a list $\blambda = (\lambda^i, \ldots, \lambda^s)$ in which each
partition consists of a single row.
Writing $\lambda^i = (a_i, 0, \dotsc, 0)$ for $i=1,\dotsc,s$, the 
non-negative integers $a_i$ satisfy the numerical condition
\[
    \sum_{i=1}^s a_i\ =\ k(n-k)\,.
\]
We identify $\blambda$ with the sequence $(a_1, \dotsc, a_s)$. Thus, $a_i$ is 
always understood to be $\lambda_1^i$.

\begin{theorem}\label{Th:Gspec_double}
 Every special Schubert problem has doubly transitive Galois group.
\end{theorem}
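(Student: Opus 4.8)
The plan is to use Lemma~\ref{L:t-transitive}, reducing double transitivity of $\calG(\blambda)$ to the irreducibility of $\calX^{(2)}_{\blambda}$. By Proposition~\ref{P:XtwoComponents} together with Lemma~\ref{Lem:MainLemma}, the irreducible components of $\calX^{(2)}_{\blambda}$ all arise from the closures of those bundles $\calC_{\bS,d}$ whose dimension equals $\dim\calY_{\blambda}$, i.e.\ those $(\bS,d)$ satisfying the excess equation~\eqref{Eq:Excess}. So the strategy is to classify \emph{all} pairs $(\bS,d)$ satisfying~\eqref{Eq:Excess}, show that each such component has $\calO_d$ in its image, and then argue that all the full-dimensional components actually coincide (or fit together into a single irreducible set). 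Since $\calO_0$, the locus $H_1\cap H_2=\{0\}$, is irreducible and the generic pair $(H_1,H_2)$ lies there, the baseline component is $\calC_{(\emptyset,\dotsc,\emptyset),0}$; for $d=0$ the equation~\eqref{Eq:Excess} is automatically satisfied (both sides are $0$), and this component is irreducible by Lemma~\ref{Lem:SpecComponents} (the $d=0$ case). The whole point is to show no \emph{other} component contributes.

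First I would specialize the excess equation~\eqref{Eq:Excess} to the special case. Here each $\lambda^i=(a_i)$ has a single nonzero part, so $m_i=1$ and $S^i\subseteq\{1\}$. Thus each $S^i$ is either $\emptyset$ or $\{1\}$, and the inner sum over $j\in S^i$ collapses: when $S^i=\{1\}$ the summand is $\lambda^i_1-k+d+|[1]\smallsetminus S^i| = a_i-k+d+0$. Writing $T:=\{i: S^i=\{1\}\}$ and noting $|S^i|\le d$ forces $d\ge 1$ whenever $T\neq\emptyset$, equation~\eqref{Eq:Excess} becomes
\[
  \sum_{i\in T}(a_i - k + d)\ =\ d(n-2k+d)\,.
\]
The key combinatorial claim to establish is that for a \emph{reduced} special Schubert problem, the only solution of this equation is $T=\emptyset$ and $d=0$. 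I would prove this by bounding the left side: each term $a_i-k+d$ can be positive only when $a_i$ is large, but the reducedness hypotheses (Definition~\ref{D:reduced}, especially conditions (a) and (c)) constrain the $a_i$, and $|T|\le s$ while $\sum a_i = k(n-k)$. The main work is showing the left side is strictly smaller than the right whenever $d\ge 1$ unless $T=\emptyset$, so that no excess component of full dimension exists.

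The hardest step will be this inequality argument, where I expect the reducedness conditions to be essential and somewhat delicate to deploy. A natural approach is to compare $\sum_{i\in T}(a_i-k+d)$ against $d(n-2k+d)$ term by term: each $i\in T$ contributes at most $a_i-k+d$, and one must show these contributions cannot reach $d(n-2k+d)$. One should use that for special conditions each $a_i\le n-k$ (reducedness (a) excludes $a_i=n-k$, so in fact $a_i\le n-k-1$, or equivalently one uses (c) to bound pairs), and that $|T|$ is limited because if too many conditions had $a_i$ large the problem would be trivial. I would likely split into the subcase $d\ge k$ (impossible since $d\le k-1$) and carefully track the count so that $\sum_{i\in T}(a_i-k+d)\le |T|(n-k-1-k+d) = |T|(n-2k-1+d)$, then compare with $d(n-2k+d)$; establishing $|T|\le d$ or a comparable bound via reducedness closes the argument. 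Once the classification yields only $(\bS,d)=((\emptyset,\dotsc,\emptyset),0)$, the single component $\calC_{(\emptyset,\dotsc,\emptyset),0}$ is irreducible and dominates $\calY_{\blambda}$, so $\calX^{(2)}_{\blambda}$ is irreducible and $\calG(\blambda)$ is doubly transitive, completing the proof.
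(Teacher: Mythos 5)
Your overall frame (reduce to irreducibility of $\calX^{(2)}_{\blambda}$, stratify by $\calC_{\bS,d}$, specialize \eqref{Eq:Excess} to $\sum_{i\in T}(a_i-k+d)=d(n-2k+d)$) matches the paper, which records this as \eqref{Eq:SpecialExcess}. But your key combinatorial claim --- that for a reduced special Schubert problem the only solution is $T=\emptyset$, $d=0$ --- is false, so the purely numerical argument you plan cannot be completed. Counterexample: the problem $(3,3,3,3,2,2)$ on $\Gr(4,8)$ is reduced (each $a_i\leq n-k-1=3$) and nontrivial (it has $6$ solutions), yet taking $d=3$ and $T$ consisting of the four conditions with $a_i=3$ and one condition with $a_i=2$ gives $\sum_{i\in T}(a_i-k+d)=2+2+2+2+1=9=3(8-8+3)=d(n-2k+d)$. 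Your claim fails even for $k=2$: for $(4,4,4,4,4)$ on $\Gr(2,12)$, taking $d=1$ and $|T|=3$ gives $3\cdot 3=9=1\cdot(12-4+1)$. So full-dimensional strata $\calC_{T,d}$ with $d>0$, $T\neq\emptyset$ genuinely exist, and the bound $|T|\leq d$ that you hope to extract from reducedness is false at the numerical level (in the first example $|T|=5>3=d$). The dimension count alone does not decide double transitivity.

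The missing idea --- and the actual content of the paper's proof --- is geometric: one shows that any excess stratum $\calC_{T,d}$ of full dimension with $d>b:=\max\{0,2k-n\}$ fails to map \emph{dominantly} to $\calY_{\blambda}$. If $i\in T$, then $K_i$ meets $L:=H_1\cap H_2$ nontrivially, so $L$ lies in the intersection $\bigcap_{i\in T}\Omega_{a_i+k-d}K_i$ inside $\Gr(d,n)$; were the image of $\calC_{T,d}$ dense, it would contain tuples of flags general enough that this intersection has its expected codimension, and its nonemptiness then forces $d(n-d)\geq\sum_{i\in T}(a_i+k-d)$. Only by combining this inequality with \eqref{Eq:SpecialExcess} does one obtain $|T|\leq d$, and then $\max_i a_i\geq n-k$, contradicting reducedness. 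This is a genuinely different input (an emptiness statement for generic intersections in an auxiliary Grassmannian $\Gr(d,n)$), not a sharpening of your term-by-term estimate. A secondary error: your baseline stratum is wrong when $2k>n$, since then $\calO_0=\emptyset$ and the generic pair lies in $\calO_b$ with $b=2k-n>0$; the paper treats this case by noting that reducedness gives $a_i+b<k$, so Lemma~\ref{Lem:SpecComponents} still makes the fiber over $\calO_b$ irreducible of the expected dimension.
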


\begin{proof}
 Let $a$ be a special Schubert condition on $\Gr(k,n)$ and $H_1,H_2\in\Gr(k,n)$ with 
 $H_1\neq H_2$.  
 By Lemma~\ref{Lem:SpecComponents}, the intersection $\Psi_a H_1\cap \Psi_a H_2$ has possibly two 
 components $U_0(a,H_1,H_2)$ and $U_1(a,H_1,H_2)$, which are the closures of the sets
 $U_{\emptyset}(a,H_1,H_2)$ and $U_{\{1\}}(a,H_1,H_2)$ of Lemma~\ref{Lem:Components}.
 We adapt the notation of Proposition~\ref{P:XtwoComponents} to that of 
 Lemma~\ref{Lem:SpecComponents}. 
 The fiber $\pi^{-1}(H_1,H_2)$ has a decomposition into sets of the form
\[ 
    \defcolor{C_{T,d}(H_1,H_2)}\ :=\ 
    \prod_{i \not\in T} U_0(a_i,H_1,H_2) \times \prod_{i \in T} U_1(a_i,H_1,H_2) \,,
\]
 where $d=\dim H_1\cap H_2$ and $T\subset[s]$.
 This is a mild change in notation from Lemma~\ref{Lem:MainLemma},
 where $C_{\mathbf{T},d}$ was indexed by a sequence 
 $(T_1, \ldots, T_s)$,  with $T_i = \emptyset$ when $i \not\in T$, and $T_i = \{1 \}$
 otherwise.  

 Let $\calC_{T,d}$ be the subset of $\pi^{-1}(\calO_d)$ whose fiber over $(H_1,H_2)$ is 
 $C_{T,d}(H_1,H_2)$.
 Then the condition~\eqref{Eq:Excess} that $\dim\calC_{T,d}=N(\blambda)=\dim\calY_{\blambda}$
 becomes  
 \begin{equation}
  \label{Eq:SpecialExcess}
   \sum_{i \in T} (a_i - k + d)\ =\ d(n-2k+d)  \,.
 \end{equation}
 
 Let $b:= \max\{0, 2k{-}n\}$ so that $\calO_b$ is Zariski dense in $\Gr(k,n)^2$.
 We show that the only set $\calC_{T,d}$ which maps 
 dominantly onto $\calY_{\blambda}$ is the unique component of $\pi^{-1}(\calO_b)$.
 We do this by first showing that $\pi^{-1}(\calO_b)$ has dimension $N(\blambda)$ and then that
 no component of $\pi^{-1}(\calO_d)$ for $d>b$ which has dimension $N(\blambda)$ maps
 dominantly to $\calY_{\blambda}$.
 This implies that $\calX^{(2)}_{\blambda}$ is irreducible and thus $\calG(\blambda)$ is
 doubly transitive.   

 First suppose that $(H_1,H_2)\in\calO_b$.  
 By Lemma~\ref{Lem:SpecComponents}, the $i$th factor $\Psi_{a_i}H_1\cap \Psi_{a_i}H_2$
 in~\eqref{Eq:2-fiber} has a unique component which has dimension $N(a_i){-}2a_i$. 
 Indeed, if $b=0$ so that $(H_1,H_2)\in\calO_0$, then by Lemma~\ref{Lem:SpecComponents}, 
 $\Psi_{a_i}H_1\cap \Psi_{a_i}H_2$ is irreducible of dimension $N(a_i){-}2a_i$.
 If $b>0$, then $k{-}b=n{-}k$.
 Since $\blambda$ is a reduced special Schubert problem, we have $a_i<n{-}k$ and so
 $a_i{+}b<k$ and again Lemma~\ref{Lem:SpecComponents} implies that 
 $\Psi_{a_i}H_1\cap \Psi_{a_i}H_2$ is irreducible of dimension $N(a_i){-}2a_i$.
  Hence $\pi^{-1}(H_1,H_2)$ is irreducible of dimension 
\[
    N(\blambda)-2\sum_{i=1}^s a_i\ =\ N(\blambda) - 2k(n-k)\,.
\]
Since $\dim \calO_b = 2k(n-k)$, we have that $\dim \pi^{-1}(\calO_b) = N(\blambda)$.

We now show that if $d>b$, then no set of the form $\calC_{T,d}$ of dimension $N(\blambda)$
maps dominantly to $\calY_{\blambda}$.
Suppose that $d>b$ and $T\subset[s]$ satisfies~\eqref{Eq:SpecialExcess} so that 
$\dim\calC_{T,d}=N(\blambda)$.
We claim that the image $f(\mathcal{C}_{T,d})$ of $\calC_{T,d}$ in $\calY_{\blambda}$ is
not dense. 

Suppose not.
Then $f(\calC_{T,d})$ meets every open subset of $\calY_{\blambda}$.
For $(H_1,H_2)\in\calO_d$, if $(K_1,\dotsc,K_s)\in C_{T,d}(H_1,H_2)$, 
then the definition of $C_{T,d}(H_1,H_2)$ (via $U_1(a_i,H_1,H_2)$ for $i\in T$), implies that 
if $i\in T$, then $K_i$ meets the $d$-plane $L:=H_1\cap H_2$ nontrivially.
Since $\dim K_i=n{-}k{+}1{-}a_i=n{-}d{+}1{-}(a_i{+}k{-}d)$, we see that 
$L$ lies in the intersection of Schubert varieties in $\Gr(d,n)$,
 \begin{equation}\label{Eq:special_int}
  \bigcap_{i\in T} \Omega_{a_i{+}k{-}d} K_i\,,
 \end{equation}
and so this intersection is nonempty.
Since $f(\calC_{T,d})$ meets every open subset of $\calY_{\blambda}$, it 
meets the subset consisting of $(K_1,\dotsc,K_s)$ which are in sufficiently general
position in that the intersection~\eqref{Eq:special_int} has the expected codimension
$\sum_{i\in T}(a_i{+}k{-}d)$.

Suppose that $(H_1,H_2)$ is a point of $\calO_d$ with 
$(K_1,\dotsc,K_s)\in C_{T,d}(H_1,H_2)$, which is in this general position.
Since~\eqref{Eq:special_int} is nonempty, its codimension is at most the dimension of
$\Gr(d,n)$.
That is, 
\[
     d(n-d)\ \geq\    \sum_{i\in T} (a_i+k-d)\,.
\]
Rewrite the sum as $2(k-d)|T| + \sum_{i\in T}(a_i-k+d)$
and then use ~\eqref{Eq:SpecialExcess} to obtain
 \[
   d(n-d)\ \geq\  2(k-d)|T| + d(n-2k+d)\ =\ 
   d(n-d) - 2(d-|T|)(k-d)\,.
 \]
 Thus $|T| \leq d$. 
 If we set $A := \max_{i} \{ a_i \}$, then we have
\[
   d(A-k+d)\ \geq\ 
   \sum_{i\in T}(a_i-k+d)\ 
   \geq\  d(n-2k+d)\,
\]
whence $A \geq n-k$. 
Thus there is  some $i\in T$ for which $a_i \geq n{-}k$, contradicting our assumption that
every $a_i < n-k$ and $\blambda$ is reduced. 
\end{proof}

%
\subsection{Double transitivity for Galois groups of Schubert problems in $\Gr(3,n)$}

\begin{theorem}
\label{Th:Gr3n_double}
  Every Schubert problem in $\Gr(3,n)$ has a doubly transitive Galois group.
\end{theorem}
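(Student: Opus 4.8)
The plan is to apply the irreducibility criterion from Lemma~\ref{L:t-transitive}: to show that $\calG(\blambda)$ is doubly transitive it suffices to prove that $\calX^{(2)}_{\blambda}$ is irreducible. Since $\calX^{(2)}_\blambda$ is a union of components of $\calX^2_\blambda$ that project dominantly to $\calY_{\blambda}$, and by Proposition~\ref{P:XtwoComponents} every such component lies in the closure of some $\calC_{\bS,d}$, the strategy is to identify \emph{which} sets $\calC_{\bS,d}$ can possibly contribute. First I would observe that when $d=b:=\max\{0,2k-n\}$ (the generic relative position of two $3$-planes), the stratum $\pi^{-1}(\calO_b)$ contains a unique component of the expected dimension $N(\blambda)$ mapping dominantly to $\calY_\blambda$, arising from all $S^i=\emptyset$; this plays the role of the ``main'' component, exactly as in the proof of Theorem~\ref{Th:Gspec_double}. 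The heart of the proof is then to show that no \emph{other} $\calC_{\bS,d}$ simultaneously satisfies the numerical equality~\eqref{Eq:Excess} of Lemma~\ref{Lem:MainLemma} \emph{and} maps dominantly onto $\calY_\blambda$.

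Because $k=3$, the index $m_i$ of the last nonzero part of each reduced $\lambda^i$ satisfies $m_i\le 2$, and $d\in\{0,1,2\}$ with $|S^i|\le d$. This makes the combinatorics genuinely finite: each $S^i$ is a subset of $[1]$ or $[2]$, so I would simply enumerate the cases. For $d=0$ all $S^i=\emptyset$ and~\eqref{Eq:Excess} reduces to $0=0$, recovering the generic component; the content is the cases $d=1$ and $d=2$. For each such $d$ I would write out the left-hand side of~\eqref{Eq:Excess},
\[
   \sum_{i=1}^s \sum_{j\in S^i}\bigl(\lambda^i_j - 3 + d + |[j]\smallsetminus S^i|\bigr)\,,
\]
and compare it to the right-hand side $d(n-6+d)$. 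The key inequality, as in Theorem~\ref{Th:Gspec_double}, will come from the fact that a dominant $\calC_{\bS,d}$ forces, over a general point, a nonempty intersection of Schubert varieties in $\Gr(d,n)$ whose $(d\text{-dimensional})$ subspace is $L=H_1\cap H_2$; nonemptiness bounds the total codimension of those conditions by $d(n-d)=\dim\Gr(d,n)$. I would use this bound to show that equality in~\eqref{Eq:Excess} together with dominance forces one of the partitions $\lambda^i$ to violate the reducedness conditions of Definition~\ref{D:reduced} (most likely pushing some $\lambda^i_1$ up to $n-3$, or forcing a pair $\mu,\nu$ to satisfy condition (c) or (d)).

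I expect the main obstacle to be the bookkeeping when $d=2$ and some $S^i=\{2\}$ or $\{1,2\}$, where the term $|[j]\smallsetminus S^i|$ interacts with $\lambda^i_2$ in a way that does not immediately collapse to the clean special-case computation. In that regime two solutions can share a $2$-plane, and the conditions on the partitions involve both parts $\lambda^i_1,\lambda^i_2$; I anticipate needing the full strength of the reducedness hypotheses (including conditions (c) and (d), which were not needed in the purely special case) to rule out the extra strata. The plan is therefore to reduce, via Proposition~\ref{Prop:reduced}, to a reduced problem, enumerate the finitely many $(\bS,d)$ satisfying~\eqref{Eq:Excess}, and for each nongeneric one exhibit either a dimension-based failure of dominance or a contradiction with reducedness, leaving the generic component as the unique top-dimensional dominant stratum and hence $\calX^{(2)}_\blambda$ irreducible.
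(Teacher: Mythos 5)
Your overall framework (Lemma~\ref{L:t-transitive}, Proposition~\ref{P:XtwoComponents}, Lemma~\ref{Lem:MainLemma}, then a case analysis over $d=0,1,2$ using reducedness and nonemptiness bounds) is indeed the paper's framework, but your plan rests on a claim that is false for a whole family of reduced problems: namely, that the stratum over $\calO_0$ with all $S^i=\emptyset$ always exists, has dimension $N(\blambda)$, and is the unique ``main'' component, so that every stratum with $d\geq 1$ can be ruled out. When some partition of $\blambda$ has second part $n-4$ (reducedness then forces that partition to be $(n{-}4,n{-}4)$, forces all other conditions to be simple, and forces such a partition to be unique --- e.g.\ the problem $\bigl((2,2),(1),(1),(1),(1),(1)\bigr)$ on $\Gr(3,6)$, which has $6$ solutions), every solution $H$ meets the $3$-plane $F_3$ of that flag in a $2$-plane, so any two solutions share a line; hence $\pi^{-1}(\calO_0)=\emptyset$ and the ``generic component'' you want to crown does not exist. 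The actual main component lies over $\calO_1$, indexed by $S^i=\{2\}$ for that one partition; it \emph{does} satisfy~\eqref{Eq:Excess} (both sides equal $n-5$) and \emph{does} map dominantly to $\calY_\blambda$, so your program of eliminating all $d\geq 1$ strata cannot be carried out. The paper instead proves a dichotomy (Lemmas~\ref{Lem:Gr3n_part1} and~\ref{Lem:Gr3n_part2}): $\pi^{-1}(\calO_0)$ is nonempty, irreducible, and of dimension $N(\blambda)$ exactly when no partition has second part $n-4$, while a dimension-$N(\blambda)$ stratum over $\calO_1$ exists exactly when some partition has second part $n-4$, and the index $\bS$ is then unique. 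In either case exactly one candidate survives, and that is what yields irreducibility of $\calX^{(2)}_{\blambda}$.

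There is a second, smaller gap: for $d=2$ the nonemptiness bound you propose --- that the Schubert conditions satisfied by $L=H_1\cap H_2$ in $\Gr(2,n)$ have total codimension at most $\dim\Gr(2,n)$ --- is not strong enough. Combined with~\eqref{Eq:3-Excess} it yields only $2\geq |S_{\{1\}}|+2|S_{\{1,2\}}|$ (this is~\eqref{Eq:sets_ineqs}), which says nothing about strata in which every $S^i$ is $\emptyset$ or $\{2\}$. The paper's Lemma~\ref{Lem:Gr3n_part3} also extracts the Schubert conditions imposed on the \emph{span} $M=\langle H_1,H_2\rangle\in\Gr(4,n)$, tabulated in Lemma~\ref{Lem:codim}; the nonemptiness inequality for $M$, added to~\eqref{Eq:3-Excess} and combined with $\sum_i|\lambda^i|=3(n-3)$, gives $|S_{\{1\}}|+2|S_{\{1,2\}}|\geq 3+|S_{\emptyset}|$, which contradicts the $L$-bound. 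So for $d=2$ you need to play the conditions on the intersection and on the span of the two solutions against each other; the intersection alone does not suffice, and reducedness by itself will not close this case either.
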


Let $\blambda$ be a Schubert problem in $\Gr(3,n)$.
We may assume that $\blambda$ is reduced.
If not, then  by Proposition~\ref{Prop:reduced} it is equivalent to a
reduced Schubert problem on either a $\Gr(3,m)$, a $\Gr(2,m)$, or a $\Gr(1,m)$ with
$m<n$. 
All reduced Schubert problems on $\Gr(1,m)=\P^{m-1}$ have a single solution.
Theorem~\ref{Th:Gr3n_double} covers the first case of $\Gr(3,m)$ and
Theorem~\ref{Th:Gspec_double} covers the second case of $\Gr(2,m)$, for
every reduced Schubert problem on $\Gr(2,m)$ is special~\cite[\S~1.3]{BdCS}.

Since ${\blambda}$ is reduced, for every two partitions $\mu,\nu\in\blambda$, we have 
\[
   \mu_1\ <\ n{-}3\,,\quad
   \mu_3\ =\ 0\,,\quad
   \mu_2+\nu_2\ <\ n{-}3\,,\quad \mbox{ and }\quad
   \mu_1+\nu_2\ \leq\ n{-}3\,.
\]
In particular, these imply that at most one partition $\mu$ in $\blambda$ has
$\mu_2=n{-}4$.

Indices $\bS$ of sets $\calC_{\bS,d}$ in the decomposition of $\pi^{-1}(\calO_d)$ are
lists $\bS=(S^1,\dotsc,S^s)$ of subsets $S^i\subset\{1,2\}$ where $S^i\subset\{1\}$ if
$\lambda^i_2=0$ and $|S^i|\leq d$.
For $I \subseteq \{1,2 \}$ define $S_I := \{i \in [s]\mid S^i = I \}$. 
Then we can rewrite~\eqref{Eq:Excess} as 
 \begin{equation}
 \label{Eq:3-Excess}
  d(n-6+d)  \  =\ 
   \sum_{i \in S_{\{1\}}} (\lambda_1^i - 3 + d) + \sum_{i \in S_{\{2\}}} (\lambda_2^i - 2 + d) 
       + \sum_{i \in S_{\{1,2\}}} (\lambda^i_1 + \lambda^i_2 - 6 + 2d)\,.
 \end{equation}

We prove Theorem~\ref{Th:Gr3n_double} by showing that there is at most one set
$\calC_{\bS,d}$ with $\bS$ and $d$ satisfying~\eqref{Eq:3-Excess},  (and thus having
dimension $N(\blambda)$) which maps dominantly to $\calY_{\blambda}$.
Since $X^{(2)}\neq\emptyset$, this set maps dominantly to $Y_{\blambda}$, and so it is
dense in $X^{(2)}$.
This implies that $\calX^{(2)}$ is irreducible, and thus $\calG(\blambda)$ is
doubly transitive.
We consider each case $d=0,1,2$ separately in the following three lemmas.

The set $\calY_{\blambda}$ consists of $s$-tuples of partial flags
 $\calF=(E^1\subset F^1,\dotsc,E^s\subset F^s)$ where, if the $i$th partition is special,
 $\lambda^i_2=0$ and so $m_i=1$, then $F^i$ is omitted as it is not needed to define the
 Schubert variety indexed by $\lambda^i$.
However, to simplify the arguments that follow, we will write these flags as if they are
all two-step flags.
The resulting ambiguity may be remedied by setting $F^i=\C^n$ when $\lambda^i_2=0$.

\begin{lemma}
\label{Lem:Gr3n_part1}
 If there is a partition $\mu$ in $\blambda$ with $\mu_2=n{-}4$, then 
 $\pi^{-1}(\calO_0)=\emptyset$.
 In all other cases, $\pi^{-1}(\calO_0)$ is irreducible and has dimension $N(\blambda)$.
\end{lemma}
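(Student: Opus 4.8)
The claim concerns the case $d=0$, where $H_1\cap H_2=\{0\}$, so $L$ is trivial and the only admissible index is $S^i=\emptyset$ for every $i$. The plan is to apply Lemma~\ref{Lem:MainLemma} (equivalently~\eqref{Eq:3-Excess}) to see what the dimension condition says here, and to directly analyze the fiber $\pi^{-1}(H_1,H_2)$ for $(H_1,H_2)\in\calO_0$. When $d=0$ the right-hand side of~\eqref{Eq:3-Excess} is $0$ and every inner sum is empty, so the excess equation holds trivially, meaning the unique set $\calC_{\bS,0}$ has dimension $N(\blambda)$ automatically. The content of the lemma is therefore not the dimension count but rather the \emph{nonemptiness} of $\pi^{-1}(\calO_0)$ and its \emph{irreducibility}.

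\textbf{Emptiness when $\mu_2=n{-}4$.} First I would handle the obstruction. Suppose some partition $\mu$ in $\blambda$ has $\mu_2=n{-}4$, and suppose $(H_1,H_2)\in\calO_0$, so $H_1\cap H_2=\{0\}$. A flag $\Fdot\in\Psi^\circ_\mu H_1\cap\Psi^\circ_\mu H_2$ must satisfy $\dim F_{n-3+2-\mu_2}\cap H_i=2$ for $i=1,2$, that is $\dim F_{n-1-\mu_2}\cap H_i=2$. With $\mu_2=n{-}4$ this subspace is $F_3$, a $3$-plane meeting each of the $3$-planes $H_1,H_2$ in a $2$-plane. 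Since $H_1\cap H_2=\{0\}$, the two $2$-planes $F_3\cap H_1$ and $F_3\cap H_2$ inside the $3$-plane $F_3$ must meet in at least a line, giving a nonzero vector in $H_1\cap H_2$, a contradiction. Hence the $\mu$-factor of the fiber is empty, so $\pi^{-1}(\calO_0)=\emptyset$. I would present this as the key dimension-counting incompatibility inside $F_3$.

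\textbf{Irreducibility in the remaining cases.} When no partition has $\mu_2=n{-}4$, I would invoke the machinery already built. The fiber $\pi^{-1}(H_1,H_2)$ over $(H_1,H_2)\in\calO_0$ is the product $\prod_i\bigl(\Psi^\circ_{\lambda^i}H_1\cap\Psi^\circ_{\lambda^i}H_2\bigr)$ by Lemma~\ref{L:2-fiber}, and with $d=0$ the only nonempty stratum is $U_{\emptyset}(\lambda^i,H_1,H_2)$, which by Lemma~\ref{Lem:Components} is irreducible of dimension $N(\lambda^i)-2|\lambda^i|$ (the sum over $j\in\emptyset$ vanishes). Here I must confirm each factor is genuinely nonempty; this is where the hypothesis $\mu_2\neq n{-}4$ enters, guaranteeing that the incidence conditions imposed by $H_1$ and $H_2$ on a generic flag can be simultaneously satisfied transversally. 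Each factor being irreducible, the product $C_{\bS,0}(H_1,H_2)$ is irreducible, and since $\pi\colon\calC_{\bS,0}\to\calO_0$ is a fiber bundle with irreducible base $\calO_0$ and irreducible fiber, the total space $\calC_{\bS,0}=\pi^{-1}(\calO_0)$ is irreducible of dimension $N(\blambda)$ by the computation in Lemma~\ref{Lem:MainLemma}.

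\textbf{Main obstacle.} The genuine subtlety is verifying nonemptiness of each factor $U_{\emptyset}(\lambda^i,H_1,H_2)$ precisely when $\lambda^i_2\neq n{-}4$; the dimension formula of Lemma~\ref{Lem:Components} is derived under the standing assumption $U_S\neq\emptyset$, so I cannot read nonemptiness off the formula. I expect the argument to reduce, via the incidence description in the proof of Lemma~\ref{Lem:Components}, to the nonemptiness criterion~\eqref{Eq:empty} of Lemma~\ref{Lem:Xeldot} applied with $d=0$ (so $\eldot$ is trivial and $\Lambda_j=\langle h^1_j,h^2_j\rangle$ has dimension $2j$), checking that $a_j-a_{j-1}\geq b_j-b_{j-1}$ holds exactly when the reducedness conditions together with $\lambda^i_2\neq n{-}4$ are in force. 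Pinning down that this single excluded value $n{-}4$ is the only failure of~\eqref{Eq:empty} is the crux of the lemma.
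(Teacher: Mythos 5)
Your first two paragraphs reproduce the paper's proof. The emptiness argument is the same dimension count (the flag subspace $F_{n-1-\mu_2}$ is a $3$-plane meeting each $3$-plane $H_i$ in dimension at least $2$, forcing $\dim H_1\cap H_2\geq 1$), and the irreducibility argument is exactly the paper's: for $d=0$ the unique index is $\bS=(\emptyset,\dotsc,\emptyset)$, equation~\eqref{Eq:3-Excess} holds trivially so $\calC_{\bS,0}$ has dimension $N(\blambda)$, and irreducibility follows from Lemma~\ref{Lem:Components} together with the fiber-bundle structure over the irreducible base $\calO_0$.

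The problem is your final paragraph, and it is a genuine gap in your write-up: you declare the nonemptiness of $U_\emptyset(\lambda^i,H_1,H_2)$ to be ``the crux of the lemma,'' you do not prove it, and the plan you sketch for proving it cannot work. With $d=0$ and $S=\emptyset$ the criterion~\eqref{Eq:empty} applied to the fibers $X(\Lamdot)$ has $a_j=n-3+j-\lambda_j$ and $b_j=2j$, so at $j=2$ it reads $1+\lambda_1-\lambda_2\geq 2$, i.e.\ $\lambda_1\geq\lambda_2+1$. This fails for \emph{every} condition with $\lambda_1=\lambda_2>0$, for instance $\lambda=(1,1)$ in $\Gr(3,6)$, where $\lambda_2=1\neq n-4$. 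So the excluded value $n-4$ is emphatically not ``the only failure of~\eqref{Eq:empty},'' and carrying out your plan would lead you to conclude $U_\emptyset=\emptyset$ in cases where the lemma asserts the opposite. In fact $U_\emptyset$ is nonempty in such cases: with $H_1=\langle e_1,e_2,e_3\rangle$ and $H_2=\langle e_4,e_5,e_6\rangle$ in $\C^6$, the flag $E=\langle e_1,e_4,e_2+e_5\rangle\subset F=\langle e_1,e_2,e_4,e_5\rangle$ satisfies $\dim E\cap H_i=1$ and $\dim F\cap H_i=2$. Note that this flag lies in $X(\Lamdot)$ even though~\eqref{Eq:empty} fails (here $E$ meets $\Lambda_2$ in dimension $3$, an excess intersection invisible to the generic-position count underlying Lemma~\ref{Lem:Xeldot}), so the inference ``criterion fails $\Rightarrow$ fiber empty'' is unavailable; the nonemptiness statement in Lemma~\ref{Lem:Xeldot} only governs the locus of generic intersections. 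To be fair, the paper itself never verifies nonemptiness either: Lemma~\ref{Lem:Components} is stated conditionally on $U_S\neq\emptyset$, and the proof of Theorem~\ref{Th:Gr3n_double} only needs that there is \emph{at most one} stratum $\calC_{\bS,d}$ of dimension $N(\blambda)$ mapping dominantly to $\calY_\blambda$, together with $\calX^{(2)}\neq\emptyset$. So either drop your third paragraph and match the paper, or, if you want nonemptiness, prove it by a direct construction of the kind above rather than via~\eqref{Eq:empty}.
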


\begin{proof}
 First suppose that there is a partition $\mu$ in $\blambda$ with $\mu_2=n{-}4$.
 If $H\in\Omega_\mu(E\subset F)$, then $\dim F=3$ and $\dim H\cap F\geq 2$.
 In particular this means that if $H_1,H_2\in\Omega_\mu(E\subset F)$, then 
 $\dim H_1\cap H_2\geq 1$ and so $(H_1,H_2)\not\in\calO_0$.
 Thus  $\pi^{-1}(\calO_0)=\emptyset$.

 Suppose now that every partition $\mu$ in $\blambda$ has $\mu_2<n{-}4$.
 The only index $\bS$ for $d=0$ is when $\bS=(\emptyset,\dotsc,\emptyset)$.
 Then $(\bS,0)$ satisfies~\eqref{Eq:3-Excess} and so $\calC_{\bS,0}$ has dimension  
 $N(\blambda)$.
 As the index $\bS$ is unique, 
 $\pi^{-1}(\calO_0)$ is irreducible and has dimension $N(\blambda)$.
\end{proof}

\begin{lemma}
\label{Lem:Gr3n_part2}
 There is a set $\calC_{\bS,1}$ of dimension $N(\blambda)$ if and only if
 there is a partition $\mu$ in $\blambda$ with $\mu_2=n{-}4$, and in that 
 case the index $\bS$ is unique.
\end{lemma}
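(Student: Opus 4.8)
The plan is to combine the numerical identity of Lemma~\ref{Lem:MainLemma} with the dominance obstruction used in the proof of Theorem~\ref{Th:Gspec_double}: by the paragraph following Proposition~\ref{P:XtwoComponents}, among the sets $\calC_{\bS,1}$ the ones that matter for $\calX^{(2)}_{\blambda}$ are those whose closures are components, equivalently those mapping dominantly onto $\calY_{\blambda}$, and I will use dominance to single them out. Because $d=1$ forces $|S^i|\leq 1$, every $S^i$ lies in $\{\emptyset,\{1\},\{2\}\}$, so $S_{\{1,2\}}=\emptyset$ and \eqref{Eq:3-Excess} becomes
\[
  n-5\ =\ \sum_{i\in S_{\{1\}}}(\lambda^i_1-2)\ +\ \sum_{i\in S_{\{2\}}}(\lambda^i_2-1)\,.
\]
Set $p:=|S_{\{1\}}|$ and $q:=|S_{\{2\}}|$.

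First I would extract the dominance constraint. If $\calC_{\bS,1}$ maps dominantly, then its image meets the locus of general flags, so for general flags there is a line $L\in\Gr(1,n)=\P^{n-1}$ lying in $E^i$ for each $i\in S_{\{1\}}$ and in $F^i$ for each $i\in S_{\{2\}}$, where $E^i\subset F^i$ is the partial flag of the $i$-th condition. These are intersections of the general linear sections $\P(E^i)$ and $\P(F^i)$, of codimensions $\lambda^i_1+2$ and $\lambda^i_2+1$ in $\P^{n-1}$, so their simultaneous nonemptiness forces
\[
  \sum_{i\in S_{\{1\}}}(\lambda^i_1+2)\ +\ \sum_{i\in S_{\{2\}}}(\lambda^i_2+1)\ \leq\ n-1\,.
\]

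Subtracting the first display from this inequality collapses everything to $2p+q\leq 2$, and I would enumerate the four solutions. The case $(p,q)=(0,0)$ gives right-hand side $0\neq n-5$; $(1,0)$ forces $\lambda^i_1=n-3$, contradicting reducedness~(a); and $(0,2)$ forces $\lambda^i_2+\lambda^j_2=n-3$ for distinct partitions, contradicting reducedness~(c). Only $(0,1)$ survives: a single index $i_0$ with $S^{i_0}=\{2\}$ and $\lambda^{i_0}_2=n-4$, and since $\lambda^{i_0}_1\leq n-4$ by reducedness this partition is $(n{-}4,n{-}4)$. This yields both the forward implication and the uniqueness of $\bS$.

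For the converse, a partition $\mu=(n{-}4,n{-}4)$ forces every other partition to be special by reducedness~(c), and I would take $\bS$ with $S^{i_0}=\{2\}$ and all other $S^i=\emptyset$; the first display holds, so $\dim\calC_{\bS,1}=N(\blambda)$ by Lemma~\ref{Lem:MainLemma}, while Lemma~\ref{Lem:Gr3n_part1} gives $\pi^{-1}(\calO_0)=\emptyset$, so $d=1$ is the generic relative position of two solutions and this set does dominate; its nonemptiness comes from Lemma~\ref{Lem:Components}. I expect the main obstacle to be making the dominance codimension bound precise for non-special conditions: pinning down which single level of each flag must contain $L$, verifying that the complementary incidences $\dim(E^i\cap L)=0$ are open and cost no codimension, and checking that the independence of the flags keeps the linear sections general, thereby extending the special-condition computation of Theorem~\ref{Th:Gspec_double} to the mixed case.
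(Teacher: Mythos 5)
Your proposal is correct and follows essentially the same route as the paper's proof: you rewrite~\eqref{Eq:3-Excess} for $d=1$ with $S_{\{1,2\}}=\emptyset$, derive the same codimension bound (the paper's~\eqref{Eq:line_codim}) from the line $L=H_1\cap H_2$ lying in the intersection of the subspaces $E^i$, $F^i$ for general flags, subtract to get $2|S_{\{1\}}|+|S_{\{2\}}|\leq 2$, and rule out all cases except $S_{\{2\}}=\{i_0\}$ with $\lambda^{i_0}_2=n-4$ using reducedness, with the identical converse construction. Your only departure is making explicit the dominance/general-position hypothesis that the paper invokes implicitly (``suppose the flags are in general position''), which is a faithful --- indeed slightly more careful --- rendering of the same argument rather than a different one.
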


\begin{proof}
 First note that if there is a partition $\mu$ in $\blambda$ with $\mu_2=n{-}4$, then it
 is unique, say $\mu=\lambda^1$.
 Consider the index $\bS=(S^1,\dotsc,S^s)$, where $S^1=\{2\}$, and $S^j = \emptyset$ for 
 all $j>1$. 
 Then $\bS$ satisfies~\eqref{Eq:3-Excess} with $d=1$, and so 
 $\calC_{\bS,1}$ has dimension $N(\blambda)$.

 Conversely, suppose that the index $\bS$ satisfies~\eqref{Eq:3-Excess} for $d=1$.
 One of the sets $S_{\{1\}}$, $S_{\{2\}}$, or $S_{\{1,2\}}$ must be nonempty.
 Since $d=1$, $S_{\{1,2\}} = \emptyset$, because $|S^i|\leq d=1$.

 Let $(H_1,H_2)\in\calO_1$ and set $\defcolor{L}:= H_1\cap H_2$, which has dimension 1.
 Suppose that $(E^1 \subset F^1,\dotsc,E^s \subset F^s)\in C_{\bS}(H_1,H_2)$ and that the
 flags are in general position.
 Then by the definition of $U_{\{1\}}$ and $U_{\{2\}}$, we have 
\[
    L\ \subset\ \bigcap_{i \in S_{\{1\}}} E^i\ \cap\ \bigcap_{i \in S_{\{2\}}} F^i\,.
\]
  Thus $\codim L \geq \sum_{i\in S_{\{1\}}}\codim E^i + \sum_{i \in S_{\{2\}}} \codim
  F^i$ and so  
 \begin{equation}\label{Eq:line_codim}
   n{-}1\ \geq\ 
   \sum_{i \in S_{\{1\}}} (\lambda^i_1 + 2)\ +\ \sum_{i\in S_{\{2\}}} (\lambda^i_2+1)\,.
 \end{equation}
Subtracting~\eqref{Eq:3-Excess} from this and dividing by 2 gives 
$2 \geq 2|S_{\{1\}}| + |S_{\{2\}}|$.
If $S_{\{1\}}=\{i\}$ then $S_{\{2\}}=\emptyset$ and~\eqref{Eq:3-Excess} implies that
$\lambda^i_1 = n{-}3$, and thus $\blambda$ is not a reduced Schubert problem, which is a
contradiction.  
Thus $S_{\{1\}}=\emptyset$ and we have $|S_{\{2\}}|\leq 2$.

If $S_{\{2\}}=\{i,j\}$ with $i\neq j$, then~\eqref{Eq:3-Excess} implies that 
$\lambda^i_2+\lambda^j_2=n-3$, which implies that $\blambda$ is not reduced, a
contradiction.
We are left with the case of $S_{\{2\}} = \{i \}$. 
Then~\eqref{Eq:3-Excess} implies that $\lambda^i_2 = n{-}4$,
which completes the proof.
\end{proof}

\begin{lemma}
\label{Lem:Gr3n_part3}
 No set $\calC_{\bS,2}$ of dimension $N(\blambda)$ maps dominantly to $\calY_{\blambda}$.
\end{lemma}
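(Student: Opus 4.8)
The plan is to show that for $d=2$, the excess equation~\eqref{Eq:3-Excess} forces the configuration to violate the reducedness of $\blambda$, exactly as happened in the $d=1$ case of Lemma~\ref{Lem:Gr3n_part2}, but now using that two planes $H_1,H_2$ meeting in a $2$-dimensional line $L:=H_1\cap H_2$ impose stronger incidence constraints. First I would take $(H_1,H_2)\in\calO_2$ and suppose some index $\bS$ satisfies~\eqref{Eq:3-Excess} with $d=2$ and that the corresponding $\calC_{\bS,2}$ maps dominantly to $\calY_{\blambda}$. Dominance means I may choose a point of $C_{\bS}(H_1,H_2)$ whose flags $(E^i\subset F^i)$ are in general position. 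The key geometric input, coming from the definition of the sets $U_S$ in Definition~\ref{def:U_S}, is that $L$ (now a $2$-plane in $\Gr(2,n)$) must satisfy certain Schubert conditions: if $i\in S_{\{1\}}$ then $\dim E^i\cap L\geq 1$; if $i\in S_{\{2\}}$ then $\dim F^i\cap L\geq 1$; and if $i\in S_{\{1,2\}}$ then $\dim E^i\cap L\geq 1$ and $\dim F^i\cap L\geq 2$, i.e. $L\subset F^i$.

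The main step is then to translate these conditions into an intersection of Schubert varieties in $\Gr(2,n)$ containing $L$, and to use nonemptiness to bound the total codimension, in parallel with the argument in Theorem~\ref{Th:Gspec_double}. Each condition $\dim E^i\cap L\geq 1$ with $\codim E^i = \lambda^i_1+1$ contributes codimension $\lambda^i_1-n+4$ in $\Gr(2,n)$ (since a generic codimension-$c$ subspace meets a $2$-plane only when $c\leq n-2$, with excess codimension $c-(n-2)$), the analogous condition for $F^i$ contributes $\lambda^i_2-n+4$, and the condition $L\subset F^i$ contributes the full codimension of a point-condition. The inequality that the total codimension of~\eqref{Eq:special_int}-analogue does not exceed $\dim\Gr(2,n)=2(n-2)$, combined with the excess relation~\eqref{Eq:3-Excess} for $d=2$, should force an impossibility. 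I expect that, after subtracting~\eqref{Eq:3-Excess} and simplifying, the resulting inequality bounds $2|S_{\{1\}}|+|S_{\{2\}}|+3|S_{\{1,2\}}|$ by a small constant, reducing to finitely many cases for the multiset $\{S^i\}$; each surviving case then yields a relation among the $\lambda^i_j$ matching one of the forbidden patterns (a) through (d) in Definition~\ref{D:reduced} — most likely $\mu_1+\nu_2\geq n-3$, $\mu_2+\nu_2\geq n-3$, or $\mu_1=n-3$ — contradicting that $\blambda$ is reduced.

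The hard part will be organizing the case analysis cleanly, since with $d=2$ the sets $S^i$ can have cardinality two and the three index-sets $S_{\{1\}}$, $S_{\{2\}}$, $S_{\{1,2\}}$ interact; in particular the $L\subset F^i$ conditions for $i\in S_{\{1,2\}}$ are very restrictive (they force a common $2$-plane inside several $3$-planes $F^i$), and I must check that the codimension bookkeeping in $\Gr(2,n)$ correctly accounts for whether $L$ is forced to meet or to be contained in each flag subspace. A secondary subtlety is that, unlike in Lemma~\ref{Lem:Gr3n_part2}, I cannot merely show $\dim\calC_{\bS,2}\neq N(\blambda)$ for all $\bS$ — a set of the correct dimension might exist — so I must genuinely use the dominance hypothesis to place the flags in general position and derive the contradiction from nonemptiness of~\eqref{Eq:special_int}, rather than from a dimension count alone. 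Once the reducedness contradiction is reached in every case, no $\calC_{\bS,2}$ maps dominantly, which completes the proof and, together with Lemmas~\ref{Lem:Gr3n_part1} and~\ref{Lem:Gr3n_part2}, establishes that a unique $\calC_{\bS,d}$ dominates $\calY_{\blambda}$ and hence that $\calX^{(2)}$ is irreducible.
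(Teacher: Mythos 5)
There is a genuine gap: your argument only exploits the Schubert conditions that the $2$-plane $L=H_1\cap H_2$ must satisfy, and this information is provably insufficient. The paper's proof (via Lemma~\ref{Lem:codim}) tracks the positions of \emph{both} $L\in\Gr(2,n)$ \emph{and} the span $M=\langle H_1,H_2\rangle\in\Gr(4,n)$ relative to the flags. From nonemptiness of the $L$-intersection one only gets $2\geq |S_{\{1\}}|+2|S_{\{1,2\}}|$ after subtracting~\eqref{Eq:3-Excess} --- an inequality that is perfectly satisfiable, e.g.\ whenever all nonempty $S^i$ equal $\{2\}$, since those terms cancel entirely. The contradiction in the paper comes from the opposing inequality $|S_{\{1\}}|+2|S_{\{1,2\}}|\geq 3+|S_\emptyset|$, which is extracted from nonemptiness of the $M$-intersection; crucially, the $M$-conditions involve \emph{every} index $i$, including those with $S^i=\emptyset$, which is what brings the global count $\sum_i|\lambda^i|=3(n-3)$ into play. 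Note also that reducedness is never invoked in the paper's proof of this lemma, whereas your plan hinges on reaching a violation of Definition~\ref{D:reduced}; that hoped-for violation simply does not occur in general.

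A concrete counterexample to your strategy: take $\blambda=\bigl((2,2),(1,1),(1,1),(1,1),(1,1)\bigr)$ on $\Gr(3,7)$, a reduced Schubert problem with $r(\blambda)=2$, and take $d=2$ with $S^i=\{2\}$ for all $i$. Then~\eqref{Eq:3-Excess} holds ($\sum_i\lambda^i_2 = 6 = d(n-6+d)$), and the conditions imposed on $L$ (meeting the $F^i$, of codimensions $2,1,1,1,1$ in $\Gr(2,7)$, total $6\leq \dim\Gr(2,7)=10$) are consistent with general flags, so no contradiction --- and no reducedness condition is violated. What kills this $\calC_{\bS,2}$ is only the condition on $M$: by Lemma~\ref{Lem:codim} the span must lie in Schubert varieties of total codimension $6+4\cdot 3=18>12=\dim\Gr(4,7)$. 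Two further technical slips, secondary to the main gap: the codimension of $E^i$ is $\lambda^i_1+2$ (not $\lambda^i_1+1$), and the condition that a $2$-plane meet a codimension-$c$ subspace nontrivially is a Schubert condition of codimension $c-1$ in $\Gr(2,n)$, not $c-(n-2)$; your codimension bookkeeping would need these corrections even for the part of the argument you do carry out.
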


We first state an auxiliary lemma which will be used in the proof of
Lemma~\ref{Lem:Gr3n_part3}. 

\begin{lemma}\label{Lem:codim}
  Let $\lambda$ be a Schubert condition for $\Gr(3,n)$, $(H_1,H_2)\in\calO_2$, and  
  $(E\subset F)\in \Psi^\circ_\lambda H_1\cap \Psi^\circ_\lambda H_2$.
  Setting $L:=H_1\cap H_2\in\Gr(2,n)$ and $M:=\langle H_1,H_2\rangle\in\Gr(4,n)$, 
  then $L$ and $M$ satisfy Schubert conditions $\mu$ and $\nu$, respectively, with respect
  to the flag $E\subset F$ 
  depending upon which subset 
  $U_S(\lambda,H_1,H_2)$ they belong to according to the
  following table.
\[
\begin{tabular}{|r||c|c|c|c|}\hline
   $S$ &$\emptyset$&$\{1\}$&$\{2\}$&$\{1,2\}$\rule{0pt}{11pt}\\\hline\hline\rule{0pt}{11pt}
 $\mu$&$(0,0)$&$(\lambda_1{+}1,0)$&$ (\lambda_2,0) $&$ (\lambda_1{+}1,\lambda_2{+}1)$\\\hline
 $\nu$&$(a,a,\lambda_2{+}1,\lambda_2{+}1)$&$ (a{-}1,\lambda_2,\lambda_2,0)$&$\rule{0pt}{11pt}
     (\lambda_1,\lambda_1,\lambda_2,0)$&$(\lambda_1{-}1,\lambda_2{-}1,0,0)$\\\hline
\end{tabular}
\]
 Here, $a=\max\{\lambda_1,\lambda_2+1\}$.
\end{lemma}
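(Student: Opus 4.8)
The plan is to read off, for each subset $S\subseteq[2]$, the Schubert conditions $\mu$ and $\nu$ from the intersection numbers of the flag $E\subset F$ with $L$, $M$, $H_1$, and $H_2$. Since $(H_1,H_2)\in\calO_2$ we have $\dim L=2$ and $\dim M=2\cdot 3-2=4$. Since $(E\subset F)\in\Psi^\circ_\lambda H_1\cap\Psi^\circ_\lambda H_2$, the equalities defining the open cells give $\dim(E\cap H_i)=1$ and $\dim(F\cap H_i)=2$ for $i=1,2$; and the equations defining $U_S$ give $\dim(E\cap L)=|[1]\cap S|$ and $\dim(F\cap L)=|[2]\cap S|=|S|$. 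Recall finally that $\dim E=n-2-\lambda_1$ and $\dim F=n-1-\lambda_2$.

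First I would determine $\mu$. A $2$-plane $L$ has its position with respect to the partial flag $E\subset F$ completely pinned down by the two numbers $\dim(E\cap L)$ and $\dim(F\cap L)$, which I have already computed. Matching these against the defining inequalities~\eqref{Eq:GrassSchubVar} for $\Gr(2,n)$ gives the first row of the table column by column; for instance $S=\{1,2\}$ gives $\dim(E\cap L)=1$ and $L\subset F$, which are exactly the conditions cut out by $\mu=(\lambda_1{+}1,\lambda_2{+}1)$. No monotonicity correction is needed here, as $\lambda_1\geq\lambda_2$ already forces $\mu$ to be a partition.

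Next I would determine $\nu$. The essential inputs are the inclusions $(E\cap H_1)+(E\cap H_2)\subseteq M\cap E$ and $(F\cap H_1)+(F\cap H_2)\subseteq M\cap F$. Applying inclusion--exclusion inside $M$, together with $(E\cap H_1)\cap(E\cap H_2)=E\cap L$ and $(F\cap H_1)\cap(F\cap H_2)=F\cap L$, yields
\[
   \dim(M\cap E)\ \geq\ 2-|[1]\cap S|
   \qquad\text{and}\qquad
   \dim(M\cap F)\ \geq\ 4-|S|\,.
\]
These are precisely the inequalities~\eqref{Eq:GrassSchubVar} placing $E$ and $F$ at the $\bigl(2-|[1]\cap S|\bigr)$th and $\bigl(4-|S|\bigr)$th conditions of a $\Gr(4,n)$ Schubert condition, and translating them produces the stated $\nu$.

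The step I expect to be the main obstacle is the bookkeeping that turns these two inequalities into a genuine (weakly decreasing) partition $\nu$. The condition coming from $E$ may be strictly weaker than the one coming from $F$, in which case the leading entries of $\nu$ must be raised to keep $\nu$ non-increasing. This is exactly the source of $a=\max\{\lambda_1,\lambda_2{+}1\}$: in the columns $S=\emptyset$ and $S=\{1\}$ the subspace $E$ sits at the $2$nd and $1$st conditions, so the naive values $\lambda_1$ and $\lambda_1{-}1$ of the leading entries get replaced by $a$ and $a{-}1$ precisely when $\lambda_1=\lambda_2$. Once the two essential conditions are identified, the intermediate inequalities of~\eqref{Eq:GrassSchubVar} are automatically implied, so $\nu$ is as claimed. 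I would close by noting that equality holds in the two displayed bounds for a general flag of $U_S$, so each $\nu$ is the exact generic condition; and that the column $S=\emptyset$ is in fact vacuous, since $\dim(F\cap H_1)=\dim(F\cap H_2)=2$ inside the $4$-plane $M$ forces $\dim(F\cap L)\geq 1$ and hence $S\neq\emptyset$.
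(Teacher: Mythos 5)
Your main computation is correct and follows essentially the same route as the paper's proof: read off $\dim(L\cap E)$ and $\dim(L\cap F)$ from the defining equations of $U_S$, bound $\dim(M\cap E)$ and $\dim(M\cap F)$ from below by the spans $\langle E\cap H_1,E\cap H_2\rangle$ and $\langle F\cap H_1,F\cap H_2\rangle$ (the paper's $\Lambda_1,\Lambda_2$), and translate through~\eqref{Eq:GrassSchubVar}. The one stylistic difference is how the maximum $a=\max\{\lambda_1,\lambda_2{+}1\}$ appears: the paper splits into the cases $\lambda_1>\lambda_2$ and $\lambda_1=\lambda_2$ and observes that in the latter case $E$ has codimension one in $F$, forcing $\dim M\cap E$ one higher than the naive bound; you instead straighten the two conditions into a weakly decreasing partition. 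These are the same fact, and your justification is sound, since a rank condition at level $i$ propagates to every lower level with the same value, so the straightened partition is exactly the implied Schubert condition. (Incidentally, your $\mu=(\lambda_1{+}1,\lambda_2{+}1)$ for $S=\{1,2\}$ agrees with the table; the paper's proof text contains a typo there.)

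Your closing claim that the column $S=\emptyset$ is vacuous, however, is wrong as stated, and the scope error is worth understanding. Your argument requires $\dim(F\cap H_i)=2$, which is an open-cell equality only when $\lambda_2>0$, that is, only when $F$ is a genuine member of the partial flag in $\Fl(\lambda)$. When $\lambda=(\lambda_1,0,0)$ is a special condition, $m=1$, the flag consists of $E$ alone (the paper's convention is $F=\C^n$), the defining condition of $U_S$ involves only $E$, and $U_\emptyset=\{E\,:\,\dim E\cap L=0\}$ is certainly nonempty. This is precisely the situation in which the $S=\emptyset$ column of the table is invoked downstream: in the proof of Lemma~\ref{Lem:Gr3n_part3}, the sums over $S_\emptyset$ in~\eqref{Eq:MNonEmpty} receive their contributions from such special conditions. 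So your observation is correct, and indeed sharp, for $\lambda_2>0$ and $(H_1,H_2)\in\calO_2$, but asserting it for all Schubert conditions $\lambda$ would discard the only case in which that column carries content. Note also that this same implicit assumption $\lambda_2>0$ underlies both your main argument and the paper's (both use $\dim(F\cap H_i)=2$); the vacuity remark is simply the one place where the assumption produces a false statement rather than an unexamined case.
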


\begin{proof}[Proof of Lemma~\ref{Lem:Gr3n_part3}]
  Suppose that the index $\bS$ satisfies~\eqref{Eq:3-Excess} for $d=2$. 
  We will show that the projection of $\calC_{\bS,2}$ to $\calY_\blambda$ is not dense.

  Let $(H_1,H_2)\in\calO_2$ and suppose that
  $\calF=(E_1 \subset F_1,\dotsc,E_s \subset F_s)\in C_{\bS,2}(H_1,H_2)$.
  Setting $L := H_1 \cap H_2\in\Gr(2,n)$ and $M := \langle H_1, H_2 \rangle\in\Gr(4,n)$
  then these lie in intersections of Schubert varieties given by flags in
  $\mathcal{F}$ as detailed in Lemma~\ref{Lem:codim}. 
  If we assume that the flags in $\calF$ are in general position so that those Schubert
  varieties intersect in the expected dimensions, then 
  we show that the nonemptiness of
  those intersections gives an inconsistent system of linear equations and inequalities,
  which proves the lemma.

  The condition that the intersection involving $L$ is nonempty is
 \begin{equation}\label{Eq:LNonEmpty}
   2(n-2)\ \geq\ \sum_{i \in S_{\{1\}}} (\lambda^i_1+1) 
       + \sum_{i \in S_{\{2\}}} \lambda^i_2 
       + \sum_{i \in S_{\{1,2\}}} (\lambda^i_1 + \lambda^i_2 + 2)\,.
 \end{equation}
  Subtracting~\eqref{Eq:3-Excess} from this and dividing by 2 gives
 \begin{equation}\label{Eq:sets_ineqs}
  2\ \geq\ |S_{\{1\}}| + 2|S_{\{1,2\}}|\,.
 \end{equation}
 The condition that the intersection involving $M$ is nonempty is
 \begin{multline}\label{Eq:MNonEmpty}
   \qquad 4(n-4)\ \geq\ 
       2\sum_{i\in S_{\emptyset}}(\lambda^i_1 + \lambda^i_2+1)
       +  \sum_{i \in S_{\{1\}}} (\lambda^i_1 + 2\lambda^i_2-1) \\
       + \sum_{i \in S_{\{2\}}} (2\lambda^i_1 + \lambda^i_2) 
       + \sum_{i \in S_{\{1,2\}}}(\lambda^i_1 + \lambda^i_2 - 2)\,.\qquad
 \end{multline}
  Adding~\eqref{Eq:3-Excess} to this and dividing by two gives
 \[
   3n-12\ \geq\ \sum_{i=1}^s|\lambda^i| +|S_{\emptyset}|- |S_{\{1\}}| -2 |S_{\{1,2\}}|
 \]
 Since $\blambda$ is a Schubert problem on $\Gr(3,n)$, $\sum_{i=1}^s |\lambda^i| = 3(n-3)$ and
 so we obtain $|S_1| + 2|S_{\{1,2\}}| \geq 3+|S_{\emptyset}|$, which
 contradicts~\eqref{Eq:sets_ineqs}. 
 Thus the projection of the set $\calC_{\bS,2}$ to $\calY_{\blambda}$ cannot meet the
 subset consisting of $s$-tuples of flags that are in general position for these two
 intersections of Schubert varieties.
\end{proof}

\begin{proof}[Proof of Lemma~\ref{Lem:codim}]
 We determine the dimensions of $L\cap E$, $L\cap F$, $M\cap E$, and
 $M\cap F$ and then apply the definition~\eqref{Eq:GrassSchubVar} to obtain the
 partitions encoding those conditions.
 Recall that $\dim E = n-2-\lambda_1$ and $\dim F=n-1-\lambda_2$, where
 $\lambda=(\lambda_1,\lambda_2,0)$.
 We use the notation of the proof of Lemma~\ref{Lem:Components}, for $i=1,2$, setting
 $h^i_1:= H_i\cap E$ and $h^i_2:=H_i\cap F$.
 Then the conditions on $H_i$ are that $\dim h^i_1 = 1$ and $\dim h^i_2=2$, for each
 $i=1,2$. 
 If we set $\defcolor{\Lambda_j}:= \langle h^1_j, h^2_j \rangle$ for $j=1,2$, then 
 $\dim \Lambda_1=2-|[1]\cap S|$ and similarly 
 $\dim \Lambda_2=4-|[2]\cap S|$.
 These are lower bounds on the dimension of $M\cap E$ and $M\cap F$.

 Suppose first that $(E\subset F)\in U_\emptyset$ so that $L\cap E=L\cap F=\{0\}$, and 
 so $\dim M\cap E\geq 2$ and $\dim M\cap F=4$.
 Then the flags  $E\subset F$ impose no conditions on $L$ so that $\mu=(0,0)$.
 If $\dim E < \dim F-1$, so that $\lambda_1>\lambda_2$, then 
 $\dim M\cap E= 2$ and $\nu$ is $(\lambda_1,\lambda_1,\lambda_2+1,\lambda_2+1)$.
 However, if $\lambda_1=\lambda_2$ so that $\dim E = \dim F-1$ then
 $\dim M\cap E= 3$, and we have 
 $\nu=(\lambda_2+1,\lambda_2+1,\lambda_2+1,\lambda_2+1)$.

 Suppose that $(E\subset F)\in U_{\{1\}}$ so that $L\cap E=L\cap F$ and this has
 dimension 1, so that $\mu=(\lambda_1+1,0)$.
 Then $\dim M\cap F=3$ and $\dim M\cap E\geq 1$.
 As in the previous case, the dimension of $M\cap E$ depends upon whether or not
 $\lambda_1=\lambda_2$, and so we get 
 $\nu=(\max\{\lambda_1{-}1,\lambda_2\},\lambda_2,\lambda_2,0)$.

 Suppose that $(E\subset F)\in U_{\{2\}}$ so that $L\cap E=\{0\}$, but
 $\dim L\cap F=1$.
 Then $L\in\Omega_{(\lambda_2,0)}F$.
 We have that $\dim L\cap E = 2$ and $\dim L\cap M=3$, so that 
 $\nu=(\lambda_1,\lambda_1,\lambda_2,0)$.

 Finally, suppose that $(E\subset F)\in U_{\{1,2\}}$ so that $\dim L\cap E=1$ and 
 $\dim L\cap F=2$.
 Then $\mu=(\lambda_2+1,\lambda_2+1)$ and $\nu=(\lambda_1-1,\lambda_2-1,0,0)$.
\end{proof}

%

\section{Schubert problems on $\Gr(2,n)$ have at least alternating Galois groups}
\label{S:G2n}

The main result of~\cite{BdCS} is that every Schubert problem on
$\Gr(2,n)$ has at least alternating Galois group.
The proof relied on Vakil's Criterion (b) and used elementary, but very involved, estimates
of trigonometric integrals to establish an inequality between two Kostka numbers.
We use the stronger and simpler Vakil's Criterion (c) (which applies, by
Theorem~\ref{Th:Gspec_double}) to give a much simpler and purely combinatorial proof
of that result.

\begin{theorem}
\label{thm:G2nalternating}
 Every Schubert problem on $\Gr(2,n)$ has at least alternating Galois group.
\end{theorem}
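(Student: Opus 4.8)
The plan is to deduce the result from our double‑transitivity theorem by invoking Vakil's Criterion~(c), which upgrades double transitivity to ``at least alternating'' without the delicate Kostka‑number inequalities of~\cite{BdCS}. First I would reduce to the case that $\blambda$ is reduced, using Proposition~\ref{Prop:reduced}; since every reduced Schubert problem on a Grassmannian of $2$‑planes is special~\cite{BdCS}, Theorem~\ref{Th:Gspec_double} then guarantees that $\calG(\blambda)$ is doubly transitive. We may also assume $r(\blambda)\geq 2$, as otherwise there is nothing to prove.

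The argument proceeds by induction on $n$. For the base cases one treats the finitely many Schubert problems on $\Gr(2,n)$ with $n$ small directly; in particular, whenever $r(\blambda)\leq 3$ double transitivity already forces $\calG(\blambda)=S_{r(\blambda)}$, which is at least alternating (with the convention that reduced problems on $\Gr(1,m)=\P^{m-1}$ have a unique solution). For the inductive step I would specialize one special condition into special position with respect to the others, as in~\cite{BdCS}. By Remark~\ref{R:Vakil} it suffices to work with the resulting irreducible specialization $Z\hookrightarrow\calY_\blambda$, along which $\calY_\blambda$ is generically smooth, so that we obtain a fiber diagram~\eqref{Eq:fiber_diagram}. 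Because $\blambda$ is special this degeneration is of Pieri type, so the limiting family $W\to Z$ has at most two components, and after the reductions of Proposition~\ref{Prop:reduced} each component is a special Schubert problem on a smaller Grassmannian of $2$‑planes; by the inductive hypothesis each has at least alternating Galois group.

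It remains to feed this into Vakil's Criteria. If $W$ is irreducible we are in case~(i), and Criterion~(a) applies immediately. If $W=W_1\cup W_2$ with respective degrees $r_1,r_2$, we are in case~(ii): when $r_1\neq r_2$ or $r_1=r_2=1$, Criterion~(b) applies since both pieces are at least alternating; and when $r_1=r_2\notin\{1,6\}$, Criterion~(c) applies, precisely because Theorem~\ref{Th:Gspec_double} has already established that $\calG(\blambda)$ is doubly transitive. Thus the single configuration not covered by this dichotomy is $r_1=r_2=6$.

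Dispatching the exceptional case $r_1=r_2=6$ is the main obstacle, and it is exactly here that the present approach improves on~\cite{BdCS}: whereas they required the strict inequality $r_1\neq r_2$ for the chosen degeneration (the source of their trigonometric integral estimates), we need only avoid a single coincident value. I would handle this by observing that $r_1=r_2=6$ forces $r(\blambda)\geq 12$ together with a very restricted combinatorial shape for $\blambda$; for such problems one can either select a different elementary specialization of the same $\blambda$ whose two parts have unequal degrees (reducing to Criterion~(b)), or verify the finitely many residual cases directly. Since double transitivity is a property of $\blambda$ and not of the degeneration, we are free to optimize the choice of specialization, and in either route the analysis is elementary and purely combinatorial. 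This completes the induction and hence the proof.
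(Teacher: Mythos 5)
Your proposal follows the same route as the paper's proof: reduce via Proposition~\ref{Prop:reduced} to a reduced (hence special) problem, get double transitivity from Theorem~\ref{Th:Gspec_double}, degenerate by the Pieri-type specialization of~\cite{BdCS} giving Schubert's recursion~\eqref{Eq:SchubertRecursion}, and feed the two branches into Vakil's Criteria, with (c) available precisely because of double transitivity. You also correctly isolate the only obstruction, a specialization with $r_1=r_2=6$. The genuine gap is that your treatment of this case is asserted rather than proved, and it is exactly here that all the remaining mathematical content lies. You claim that $r_1=r_2=6$ forces a ``very restricted combinatorial shape'' and that one can either pick a different elementary specialization with unequal branch degrees, or ``verify the finitely many residual cases directly.'' Neither claim is justified. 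When all the $a_i$ are equal, say $\blambda=(a^s)$, every reordering produces the same pair of branches $K(a^{s-2},2a)$ and $K(a^{s-2},a{-}1,a{-}1)$, so your first escape route is simply unavailable; and since the problems $(a^s)$ form an infinite family, ``finitely many residual cases'' is itself a nontrivial assertion that cannot be checked directly without first bounding the relevant Kostka numbers --- which is precisely the work you have omitted. The paper supplies the two missing ingredients: Proposition~\ref{prop:injection} (a combinatorial injection from~\cite{BdCS}) shows, when the $a_i$ are not all equal, that the merge branches of two suitable reorderings satisfy a strict inequality and hence cannot both equal $6$; and Lemma~\ref{lem:multparts} computes enough Kostka numbers to show that $K(a^{s-2},2a)=6$ occurs only for $(a,s)=(10,5)$ and $(2,6)$, in which cases the complementary branch has degree $70$, respectively $9$, so Criterion (c) still applies. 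Without these two ingredients, or substitutes for them, your induction does not close.

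Two smaller points. First, induction on $n$ alone does not suffice: in Schubert's recursion the merged branch $(a_1,\dotsc,a_{s-2},a_{s-1}{+}a_s)$ lives on the same $\Gr(2,n)$ whenever $a_{s-1}+a_s\leq n-2$ (only the other branch drops to $\Gr(2,n{-}1)$), so one must induct on $s$ and $n$ jointly, as the paper does. Second, your fallback of hunting for a specialization with \emph{unequal} degrees (to use Criterion (b)) aims at a stronger condition than necessary: since double transitivity is already in hand, all that is needed is that \emph{some} branch of \emph{some} reordering has degree different from $6$, and this weaker statement is what the paper's combinatorial lemmas actually establish.
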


It suffices to prove this for reduced Schubert problems, by Proposition~\ref{Prop:reduced}. 
By Definition~\ref{D:reduced}, a reduced Schubert problem on $\Gr(2,n)$ has the
form $(a_1,\dotsc,a_s)$ with $\sum_i a_i=2(n{-}2)$ and 
$a_i+a_j\leq n{-}2$ for each $i,j$, in particular, a reduced Schubert problem is special.
Let $K(a_1,\dotsc,a_s)$ be the number of solutions to a special Schubert problem
$(a_1,\dotsc,a_s)$ on $\Gr(2,n)$.
This is a Kostka number, which counts the number of Young tableaux of shape $(n{-}2,n{-}2)$
and content $(a_1,\dotsc,a_s)$.
These are fillings of the Young diagram of shape
$(n{-}2,n{-}2)$ with $a_1$ ones, $a_2$ twos, through $a_s$ s's, such that the filling is
weakly increasing along the rows and strictly increasing 
down the columns. 

Section 2.1 of~\cite{BdCS} shows that there is a fiber diagram~\eqref{Eq:fiber_diagram},
such that $\calG(W_1 \to Z)$ and $\calG(W_2 \to Z)$ are isomorphic to the Galois groups of
the Schubert problems $(a_1, \ldots, a_{s-1}+a_s)$ and 
$(a_1, \ldots, a_{s-2}, a_{s-1}{-}1, a_s{-}1)$, respectively. 
This gives \demph{Schubert's recursion},
 \begin{equation}\label{Eq:SchubertRecursion}
   K(a_1,\ldots, a_s)\ =\
    K(a_1, \ldots, a_{s-2}, a_{s-1}{+}a_s)\ +\
   K(a_1, \ldots, a_{s-2}, a_{s-1}{-}1, a_s{-}1)\,.
 \end{equation}

The following proposition is proven in~\cite{BdCS} using a combinatorial injection.

\begin{proposition}[Lemma~11~\cite{BdCS}]
 \label{prop:injection}
  Let $(a_1, \ldots, a_s)$ be a reduced Schubert problem on $\Gr(2,n)$ such that 
  $a_{s-2}\leq a_{s-1}\leq a_s$ with $a_{s-2}<a_s$.
   Then 
 \[ 
    K(a_1,\ldots, a_{s-2}, a_{s-1}+a_s)\ <\
    K(a_1, \ldots, a_{s-3},a_s, a_{s-2}+a_{s-1})\,.
 \] 
\end{proposition}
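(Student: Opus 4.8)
The statement I want to prove is Proposition~\ref{prop:injection}: for a reduced Schubert problem $(a_1,\dotsc,a_s)$ on $\Gr(2,n)$ with $a_{s-2}\leq a_{s-1}\leq a_s$ and $a_{s-2}<a_s$, we have the strict inequality
\[
   K(a_1,\dotsc,a_{s-2},a_{s-1}{+}a_s)\ <\
   K(a_1,\dotsc,a_{s-3},a_s,a_{s-2}{+}a_{s-1})\,.
\]
Both sides are Kostka numbers counting semistandard Young tableaux of shape $(n{-}2,n{-}2)$, that is, fillings of a two-row rectangle, weakly increasing along rows and strictly increasing down each of the $n{-}2$ columns. The left side has content $(a_1,\dotsc,a_{s-2},a_{s-1}{+}a_s)$: the last entry $s{-}1$ appears $a_{s-1}{+}a_s$ times. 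The right side has content $(a_1,\dotsc,a_{s-3},a_s,a_{s-2}{+}a_{s-1})$: here the entry in position $s{-}2$ appears $a_s$ times and the last entry appears $a_{s-2}{+}a_{s-1}$ times. The plan is to construct an explicit content-respecting injection from the tableaux counted on the left into those counted on the right, and then exhibit at least one tableau on the right not in the image, forcing strictness.

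\textbf{Reduction to a relabeling problem.} First I would observe that both multisets of contents agree except in the last two ``letters.'' The left problem uses $s{-}1$ letters, the last of which is heavily weighted ($a_{s-1}{+}a_s$ copies); the right problem uses $s{-}1$ letters where the penultimate letter has $a_s$ copies and the last has $a_{s-2}{+}a_{s-1}$ copies. Since Kostka numbers are symmetric in their content (swapping any two parts gives the same number), I may freely reorder, so the real content of the claim is a comparison between the single content $(\dots,a_{s-1}{+}a_s)$ and a \emph{split} content $(\dots,a_s,a_{s-2}{+}a_{s-1})$ where the total mass on the last two letters is the same, $a_{s-2}{+}a_{s-1}{+}a_s$, but distributed differently. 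The natural injection takes a tableau $T$ on the left, whose largest letter $s{-}1$ occupies a collection of cells, and splits that letter's cells into a new largest letter and the previous one, choosing which $a_s$ of the topmost or rightmost cells get the intermediate value. Because of the two-row rectangular shape, the positions occupied by the maximal letter form a horizontal-strip-like region, and the strict-down-columns condition tightly constrains how I may recolor; I would use this to define the splitting canonically (e.g.\ recolor the rightmost eligible cells).

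\textbf{The main obstacle and how to finish.} The hard part will be verifying that the recoloring map is well-defined (lands in genuinely semistandard tableaux of the right content) and is injective. Semistandardness after splitting requires that wherever I introduce the intermediate letter $s{-}2$ (value $a_s$ copies) versus keeping the top letter, the column-strictness and row-weakness survive; this is where the hypothesis $a_{s-2}\leq a_{s-1}\leq a_s$ and the fact that the shape is exactly a $2\times(n{-}2)$ rectangle should guarantee enough room. Injectivity should follow because the splitting rule is reversible: given a right-tableau, merging its two largest letters recovers a unique left-tableau, provided the rule picks the canonical split. For strictness, I would produce one tableau on the right in which the two largest letters are interleaved in a way (e.g.\ a cell with the intermediate letter sitting strictly below a cell containing a smaller letter than the merge would allow) that cannot arise as the image of any left-tableau; the strict inequality $a_{s-2}<a_s$ is exactly what guarantees such an ``extra'' configuration exists. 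Rather than grinding through the cell-by-cell bookkeeping, I would cite the combinatorial construction of Lemma~11 of~\cite{BdCS} for the detailed injection, as the proposition is stated there, and concentrate my effort on confirming that the hypotheses $a_{s-2}\leq a_{s-1}\leq a_s$ and $a_{s-2}<a_s$ are precisely the ones that make the injection strict.
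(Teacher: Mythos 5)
Your proposal is correct in exactly the sense that the paper is: the paper gives no proof of this proposition at all, stating only that it "is proven in~\cite{BdCS} using a combinatorial injection" (hence the label Lemma~11~\cite{BdCS}), and your argument likewise defers the actual construction to that lemma after sketching what the injection should do. Since both resolve the statement by citation to the same external result, your approach matches the paper's.
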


We compute some Kostka numbers.

\begin{lemma}
\label{lem:multparts}
 Let $a, s$ be positive integers with $as = 2(n-2)$.
 Then
\begin{enumerate}
\item $K(a^2, 2a) = K(a^3) = 1$,
\item $K(a^4) = a+1$,
\item $K(a^3, 2a) = b + 1$ and $K(a^3, a{-}1,a{-}1) = \frac{5b^2+3b}{2}$, where $a = 2b$
        is even, 
\item $K(a^4, 2a) = \binom{a+2}{2}$,
\item $K(a^{s-2}, 2a) \geq (a+1)K(a^{s-4}, 2a) + K(a^{s-4})$ when $s \geq 7$.
\end{enumerate}
\end{lemma}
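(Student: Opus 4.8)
The plan is to evaluate every quantity in the standard tableau model for these Kostka numbers and to deduce the inequality in (5) from a Pieri-type decomposition. Recall that $K(a_1,\dots,a_s)$ counts semistandard Young tableaux of the two-row rectangular shape $(m,m)$ with $m=n{-}2$ and content $(a_1,\dots,a_s)$, and that this number is invariant under permuting the parts of the content. I would encode a tableau by the chain of shapes $\emptyset=\nu^{0}\subset\nu^{1}\subset\dots\subset\nu^{s}=(m,m)$, where $\nu^{i}$ is the subshape occupied by entries $\le i$; the tableau conditions are exactly that each skew shape $\nu^{i}/\nu^{i-1}$ is a horizontal strip with $a_i$ boxes. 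Writing $x_i:=\nu^{i}_2$ for the second-row length, such a chain is a lattice path $0=x_0\le x_1\le\dots\le x_s$ subject to the explicit horizontal-strip inequalities $x_{i-1}\le x_i\le \nu^{i-1}_1$, so that every claim below becomes a count of such paths.

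For the small cases (1), (2), and (3a) the extreme symbols are forced into place (all $1$'s begin the top row and the largest symbols end the bottom row), the horizontal-strip inequalities become vacuous, and the only freedom is how the intermediate symbols split between the two rows; tallying these splits yields $1$, $a{+}1$, and $b{+}1$ respectively. For the quadratic values in (4) and (3b) the strip inequalities are binding, so the count is a genuinely two-dimensional constrained lattice-path sum; I would evaluate these directly to get $\binom{a+2}{2}$ and $\tfrac{5b^{2}+3b}{2}$. Alternatively, for (3b) one may iterate Schubert's recursion~\eqref{Eq:SchubertRecursion} on the repeated last part, reducing $K(a^{3},a{-}1,a{-}1)$ to a sum of the one-repeated-part numbers $K(a^{3},2c)$ (plus the base value $K(a^{3})=1$), which sums to the stated quadratic.

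The heart of the lemma is the inequality (5), which I would obtain by a chain decomposition. Since the content $(a^{s-2},2a)$ has a single part equal to $2a$, placing it last makes the final strip a horizontal $2a$-strip into $(m,m)$; this forces the penultimate shape to be $\nu^{s-2}=(m,m{-}2a)$ because the first row must already be full. Hence $K(a^{s-2},2a)$ equals the number of chains of $s{-}2$ horizontal $a$-strips from $\emptyset$ to $(m,m{-}2a)$, with $m=sa/2$. Grouping such chains by the shape $\nu:=\nu^{s-4}$ reached after $s{-}4$ steps gives
\[
  K(a^{s-2},2a)\ =\ \sum_{\nu} P(\nu)\,g(\nu)\,,
\]
where $P(\nu)$ counts chains of $s{-}4$ $a$-strips from $\emptyset$ to $\nu$, $g(\nu)$ counts the two-step $a$-strip extensions of $\nu$ to $(m,m{-}2a)$, and every summand is nonnegative. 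From the strip inequalities I would compute the two extension counts $g(m{-}a,m{-}3a)=a{+}1$ and $g(m{-}2a,m{-}2a)=1$, and identify the matching chain counts $P(m{-}a,m{-}3a)=K(a^{s-4},2a)$ and $P(m{-}2a,m{-}2a)=K(a^{s-4})$ by the same pinning of a final $2a$- or $a$-strip. Discarding the remaining nonnegative terms yields the claimed bound; equivalently, this exhibits an injection from the $(a{+}1)$ extensions of the first family and the single extension of the second into tableaux for $(a^{s-2},2a)$.

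The condition $s\ge 7$ is exactly what guarantees $m{-}3a=a(s{-}6)/2\ge 0$, so that both distinguished shapes are genuine two-row partitions, and since $(m{-}a,m{-}3a)\ne(m{-}2a,m{-}2a)$ no chain is counted twice. I expect the only laborious step to be the two binding lattice-path evaluations in (3b) and (4); the decomposition proving (5) is clean once the model is in place, its entire content being the pinning forced by the $2a$-strip together with the two short-interval counts $g(m{-}a,m{-}3a)$ and $g(m{-}2a,m{-}2a)$.
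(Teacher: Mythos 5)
Your proposal is correct, and for the crucial inequality (5) it carries the same combinatorial content as the paper's proof, but packaged differently. The paper proves (5) by an explicit injection: from each tableau of shape $(n{-}2{-}a,n{-}2{-}a)$ and content $(a^{s-4},2a)$ it strips the final $2a$ entries equal to $s{-}3$ (landing on exactly your pinned shape $(m{-}a,m{-}3a)$) and re-extends in $a{+}1$ ways, and from each tableau of shape $(n{-}2{-}2a,n{-}2{-}2a)$ and content $(a^{s-4})$ (your shape $(m{-}2a,m{-}2a)$) it builds one more tableau, then checks all of these are pairwise distinct. Your version organizes the same two families through the identity $K(a^{s-2},2a)=\sum_\nu P(\nu)\,g(\nu)$ and keeps the terms $\nu=(m{-}a,m{-}3a)$ with $g=a{+}1$ and $\nu=(m{-}2a,m{-}2a)$ with $g=1$; what this buys is that distinctness is automatic (chains through different intermediate shapes cannot coincide), whereas the paper must remark that its constructed tableaux "are all different." The other divergence is in parts (1)--(3): the paper does not prove these at all but cites Lemmas 8 and 9 of~\cite{BdCS}, while you sketch self-contained lattice-path evaluations; your recursion route for $K(a^3,a{-}1,a{-}1)$ does work, since iterating~\eqref{Eq:SchubertRecursion} telescopes to $\sum_{c=1}^{a-1}K(a^3,2c)+K(a^3)$, and summing the two regimes $K(a^3,2c)=2c{+}1$ (for $c\leq b$) and $3b{-}c{+}1$ (for $c>b$) indeed gives $\frac{5b^2+3b}{2}$. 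Part (4) is proved in the paper exactly as you suggest, by noting the tableau is determined by its second row. So nothing is missing: your treatment is more self-contained and makes the nonnegativity/distinctness structure of (5) transparent, while the paper's is shorter because it leans on~\cite{BdCS} for the closed-form values.
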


\begin{proof}
 Statements (1)---(3) are from Lemmas 8 and 9 of~\cite{BdCS}. 
 Statement (4) is simple, as $\binom{a+2}{2}$ is the number of triples $(x,y,z)$ of
 nonnegative integers whose sum is $a$.
 A Young tableau of shape $(n{-}2,n{-}2)$ and content $(a^4,2a)$ is determined by its
 second row which consists of $2a$ fives and $a$ remaining numbers which are some twos,
 threes, and fours.

 For (5), we construct tableaux of shape $(n{-}2,n{-}2)$ and content $(a^{s-2},2a)$
 from tableaux of shape $(n{-}2{-}a,n{-}2{-}a)$ and content $(a^{s-4},2a)$ and of
 shape $(n{-}2{-}2a,n{-}2{-}2a)$ and content $(a^{s-4})$.
 Let $s \geq 7$ and consider a tableau $T$ of shape $(n{-}2{-}a,n{-}2{-}a)$ and content 
 $(a^{s-4}, 2a)$.
 For each $i=0,\dotsc,a$, construct a tableau $T_i$ of   $(n{-}2,n{-}2)$ and content
 $(a^{s-2},2a)$ as follows.
 The second row of $T$ ends in $2a$ entries of $s{-}3$.
 Remove these to get a tableau $T'$ of shape $(n{-}2{-}a,n{-}2{-}3a)$ and content $(a^{s-4})$,
 which we extend to get $T_i$ as follows.
 Place $i$ entries of $s{-}3$ and $a{-}i$ entries of $s{-}2$ at the end of the first row of
 $T'$ and then $a{-}i$ entries of $s{-}3$ and $i$ entries of $s{-}2$ in the second row,
 followed by $2a$ entries of $s{-}1$ to obtain $T_i$.
\[
  T\ =\ 
  \raisebox{-13pt}{\begin{picture}(145,33)(5,0)
   \put(5,0){\includegraphics{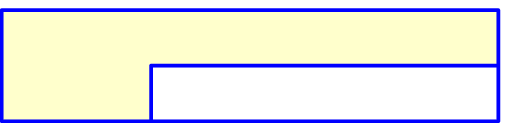}}
     \put(22.5,13){$T'$}
     \put(89,5){$s{-}3$}
  \end{picture}}
   \ \longmapsto\ 
  \raisebox{-22.5pt}{\begin{picture}(195,54)(5,-11)
   \put(5,0){\includegraphics{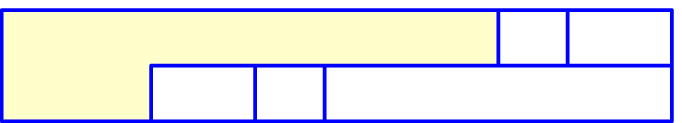}}
     \put(22.5,13){$T'$}
     \put(52,5){$s{-}3$}
     \put(139,5){$s{-}1$}
     \put(173,21){$s{-}2$}
     \put(156,36){$i$} \put(173,36){$a{-}i$}
     \put(85,-10){$i$} \put(52,-10){$a{-}i$} \put(143,-10){$2a$}
  \end{picture}}
  \ =:\ T_i
\]
 This gives $(a{+}1)K(a^{s-4},2a)$ tableaux of shape $(n{-}2,n{-}2)$ and content
 $(a^{s-2},2a)$.

 Given a tableau $T$ of shape $(n{-}2{-}2a,n{-}2{-}2a)$ and content $(a^{s-4})$, add
 $a$ entries of $s{-}3$ and $a$ of $s{-}2$ to the first row and $2a$ entries of
 $s{-}1$ to get a tableau of shape $(n{-}2,n{-}2)$ and content  $(a^{s-2},2a)$.
\[
  \raisebox{-13pt}{\begin{picture}(85,33)(5,0)
   \put(5,0){\includegraphics{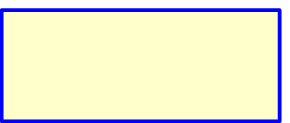}}
     \put(40,13){$T$}
  \end{picture}}
   \ \longmapsto\ 
  \raisebox{-13pt}{\begin{picture}(145,33)(0,0)
   \put(5,0){\includegraphics{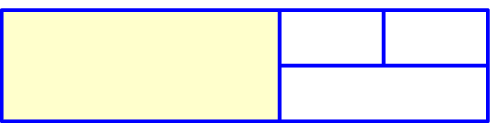}}
     \put(40,13){$T$}
     \put(90,21){$s{-}3$}
     \put(120,21){$s{-}2$}
     \put(105,5){$s{-}1$}
  \end{picture}}
\]
 This gives $K(a^{s-4})$ tableaux of shape $(n{-}2,n{-}2)$ and content
 $(a^{s-2},2a)$, all of which are different from the others we constructed, which proves
 statement~(5).
\end{proof}

\begin{proof}[Proof of Theorem~\ref{thm:G2nalternating}]
 We prove the result by induction on $s$ and $n$. 
 Let $\blambda=(a_1, \ldots, a_s)$ be a reduced Schubert problem.
 We may assume that $s\geq 4$, for otherwise $K(a_1,\dotsc,a_s)\leq 1$ and there is 
 nothing to prove.
 Since $\calG(\blambda)$ is doubly transitive by Theorem~\ref{Th:Gspec_double},
 Vakil's Criterion (c) and Schubert's recursion~\eqref{Eq:SchubertRecursion} imply that 
 $\calG(\blambda)$ is at least alternating if one of $K(a_1, \ldots, a_{s-1}+a_s)$ or 
 $K(a_1, \ldots,a_{s-1}{-}1,a_s{-}1)$ is not six, for some reordering of
 the list $a_1,\dotsc,a_s$.  

 Suppose first that not all the $a_i$ are equal and that they are ordered so that 
 $a_{s-2}\leq a_{s-1}\leq a_s$ with $a_{s-2}<a_s$.
 By~\eqref{Eq:SchubertRecursion}, $K(a_1,\dotsc,a_s)$ is equal to
 either the sum,
\[
   K(a_1,\dotsc,a_{s-2}, a_{s-1}+a_s)\ +\ 
   K(a_1,\dotsc,a_{s-2}, a_{s-1}{-}1, a_s{-}1)\,,
\]
 or the sum 
\[
   K(a_1,\dotsc,a_{s-3}, a_s, a_{s-1}+a_{s-2})\ +\ 
   K(a_1,\dotsc,a_{s-3}, a_s, a_{s-1}{-}1, a_{s-2}{-}1)\,.
\]
 By Proposition~\ref{prop:injection},
 $K(a_1,\dotsc,a_{s-2}, a_{s-1}+a_s)<K(a_1,\dotsc,a_{s-3}, a_s, a_{s-1}+a_{s-2})$, so they
 cannot both be six, which implies $\calG(\blambda)$ is  at least alternating.

 Assume now $a_1 = a_2 = \cdots = a_s = a$ for some positive integer $a$. 
 There are only two values of $(a,s)$ for which  $K(a^{s-2}, 2a) = 6$, and 
 for both of these $K(a^{s-2},a{-}1,a{-}1)\neq 6$, which implies that 
 $\calG(a^s)$ is at least alternating.

 Indeed, by Lemma~\ref{lem:multparts}, if $s=4$, then $K(a^2,2a)=1$.
 If $s=5$, then $K(a^3,2a)=6$ only for $a=10$, and then 
 $K(10^3,9,9) = \frac{5\cdot5^2 + 3\cdot 5}{2} = 70 > 6$.
 If $s=6$, then $K(a^4,2a)=6$ only for $a=2$, and then 
 $K(2^4,1,1) = 9 > 6$.

 We show that if $s>6$, then $K(a^{s-2}, 2a)>6$.
 By Lemma~\ref{lem:multparts} (5), 
\[
   K(a^{s-2}, 2a)\ \geq\ (a+1)K(a^{s-4},2a)\ +\ K(a^{s-4})\ >\ K(a^{s-4},2a)\,.
\]
 So it suffices to show $K(a^{s-2}, 2a)>6$ for $s=7,8$.
 When $s=8$, this becomes
\[
   K(a^{6}, 2a)\ \geq\ (a+1)K(a^{4},2a)\ +\ K(a^{4})\ =\ 
    (a+1) \binom{a+2}{2} + a + 1 \ \geq\ 8\,,
\]
  and when $s=7$, $a=2b$ is even, and we have 
\[
    K(a^{5}, 2a)\ \geq\
    (a+1)K(a^3,2a)\ +\ K(a^3)\ \geq\
    (a+1) \left(b + 1\right) + 1\ \geq\ 7\,.
\]
 This completes the proof.
\end{proof}

%
\section{Galois groups of Schubert problems on $\Gr(4,8)$}
\label{S:IM}

Using Vakil's Criteria and the Frobenius Algorithm of~\cite[\S~5.4]{MSJ} (a symbolic
method to prove that a Galois group is full symmetric by computing cycle types of
elements), we study the Galois group of every reduced
Schubert problem on $\Gr(4,8)$.
A nonreduced problem in $\Gr(4,8)$ is equivalent to a problem in some $\Gr(k,n)$ with
$k\leq 4$ and $n<8$, and Vakil earlier showed that these problems have at least
alternating Galois groups.
There are 2987 reduced Schubert problems on $\Gr(4,8)$ having two or more solutions.
%
%
All except fourteen have at least alternating Galois group with many 
known to be the full symmetric group. 
Each of these fourteen have imprimitive Galois group, which we determine, and 
they fall into three families according to their geometry.
Much of this computation, except for the determination of the Galois groups of the
  last thirteen Schubert problems, was done by Vakil in~\cite{Va06b}.

Vakil wrote a maple script based on his geometric Littlewood-Richardson rule and his
Criterion (b) to test which Schubert problems had Galois groups that are at least
alternating~\cite{Va06b}. 
We altered it to only test reduced Schubert problems on $\Gr(4,8)$ and found 28
problems for which it was inconclusive. 
We list them, using a compact product notation in which the Schubert
problem $(\sTT,\sTT,\sTT,\sI,\sI,\sI,\sI)$ with 32 solutions is written 
$\sTT^3\cdot\sI^4=32$.
 \begin{center}
  $\sI^{16}=24024$\,,\ \ 
  $\sTT\cdot\sI^{12}=2640$\,,\\\rule{0pt}{14pt}
  $\sIII\cdot\sTh\cdot\sI^{10}=420$\,,\ \ 
  $\sTT^2\cdot\sI^8=280$\,,\ \ 
  $\sTT\cdot\sTI^4=42$\,,\ \ 
  $\sThTT\cdot\sIII\cdot\sI^6=36$\,,\\ 
  $\sTT^3\cdot\sI^4=32$\,,\ \ 
  $\sThThTh\cdot\sI^5=20$\,,\ \ \ \rule{0pt}{14pt}
  $\sThI\cdot\sTI\cdot\sTh^3=6$\,,\  \ 
  $\sIII^4\cdot\sII\cdot\sT=6$\,,\\ 
  $\sTh^4\cdot\sII\cdot\sT=6$\,,\ \ \rule{0pt}{14pt}
  $\sThTI^2\cdot\sII\cdot\sT=6$\,,\ \ 
  $\sThTh\cdot\sII^4\cdot\sT=6$\,,\ \ 
  $\sTTT\cdot\sII\cdot\sT^4=6$\,,\\\rule{0pt}{16pt}
  $\sTT^4=6$\,,\\\rule{0pt}{16pt}
  $\sTT^2\cdot\sTII\cdot\sThI\ =\ 
   \sTT^2\cdot\sTII\cdot\sTh\cdot\sI\ =\ 
   \sTT^2\cdot\sIII\cdot\sThI\cdot\sI\ =\ 
   \sTT^2\cdot\sIII\cdot\sTh\cdot\sI^2\ =\ 4$\,,\\\rule{0pt}{16pt}
  $\sTII^2\cdot\sThI^2\ =\ 
   \sTII^2\cdot\sThI\cdot\sTh\cdot\sI\ =\ 
   \sTII\cdot\sIII\cdot\sThI^2\cdot\sI\ =\ 
   \sTII^2\cdot\sTh^2\cdot\sI^2\ = \ 
   \sTII\cdot\sIII\cdot\sThI\cdot\sTh\cdot\sI^2$\\
  $=\ \sIII^2\cdot\sThI^2\cdot\sI^2\ =\ 
     \sTII\cdot\sIII\cdot\sTh^2\cdot\sI^3\ =\ 
      \sIII^2\cdot\sThI\cdot\sTh\cdot\sI^3\ =\
   \sIII^2\cdot\sTh^2\cdot\sI^4\ =\ 4$\,.
\end{center}

In Subsection~\ref{S:largeSP} we show that the two Schubert problems in the first row have
at least alternating Galois groups.
We used the Frobenius Algorithm~\cite[\S~5.4]{MSJ} to show that the next twelve have full
symmetric Galois group.
The remaining fourteen on the last four lines have imprimitive Galois groups.
They are grouped by similar geometry, which we indicate by strings of equalities.
We describe one problem from each family in the remaining three subsections.

%
\subsection{Two large Schubert problems}\label{S:largeSP}

The Schubert problem $\blambda\colon\sI^{16}=24024$ is special and therefore has doubly
transitive Galois group by Theorem~\ref{Th:Gspec_double}.  
It asks for the 4-planes $H$ in $\C^8$ that meet 16 general 4-planes $K_1,\dotsc,K_{16}$
nontrivially. 
If $(K_1,K_2)\in\calO_3$ so that $\defcolor{m}:=K_1\cap K_2$ is a 3-plane and
$\defcolor{M}:=\langle K_1,K_2\rangle$ is a 5-plane, then Lemma~\ref{Lem:SpecComponents}
shows that 
 \begin{equation}
  \label{Eq:two_components}
   \Omega_{\sI}K_1\cap \Omega_{\sI}K_2\ =\ 
   \Omega_{\sT} m \cup \Omega_{\sII} M\,.
 \end{equation}
Thus if $\defcolor{\calZ}=\calO_3\times \Gr(4,8)^{14}\subset\calY_{\blambda}$ and
$\calW\to\calZ$ is the restriction of $\calX_{\blambda}$ to $\calZ$ (as
in~\eqref{Eq:fiber_diagram}), then $\calW= \calX_{\bmu}\cup\calX_{\bnu}$ where
$\calX_{\bmu}$ and $\calX_{\bnu}$ are (essentially) total spaces of the Schubert
problems 
\[
  \bmu\ \colon\ \sT\cdot \sI^{14}\ =\ 12012
   \qquad\mbox{and}\qquad
  \bnu\ \colon\ \sII\cdot \sI^{14}\ =\ 12012\,.
\]
These are equivalent dual problems and each was found to have at least alternating Galois
group by Vakil's maple script.
As the original problem $\sI^{16}=24024$ is doubly transitive, Vakil's Criterion (c) 
and Remark~\ref{R:Vakil} implies that its  Galois group is at least alternating.

The Schubert problem  $\blambda\colon\sTT\cdot \sI^{14}=2640$ also has doubly transitive
Galois group. 
To see this note that by Lemma~\ref{Lem:Components} the subset $U_S(\sTT,H_1,H_2)$ of 
$\Psi^\circ_{\ssTT}H_1\cap \Psi^\circ_{\ssTT}H_2$ has dimension at most $N(\sTT)=8$ unless 
$d=\dim H_1\cap H_2 = 2$ and $S=\{2\}$, and that $U_{\{2\}}$ has dimension 9.
Since $\Psi^\circ_{\sI}H_1\cap \Psi^\circ_{\sI}H_2$  has dimension $N(\sI)=14$ for $H_1\neq H_2$, we
see that the only set $\calC_{\bS,d}$ having dimension $N(\blambda)$ is when $d=0$ and each
component of $\bS$ is $\emptyset$.

The special position~\eqref{Eq:two_components} gives a subset
$\calZ\subset\calY_{\blambda}$ with the restriction of $\calX_{\blambda}$ to $\calZ$ having two
components, each essentially the total space of one of the Schubert problems
\[
   \sTT\cdot\sII\cdot\sI^{10}\ =\ 1320
  \qquad\mbox{and}\qquad
   \sTT\cdot\sT\cdot\sI^{10}\ =\ 1320\,.
\]
These equivalent dual Schubert problems were found to have at least alternating Galois
group by Vakil's maple script.
As the original problem $\sTT\cdot\sI^{12}=2640$ is doubly transitive, Vakil's Criterion
(c) implies that its  Galois group is at least alternating.

%
\subsection{The Schubert problem $\sTT^4=6$.}\label{S:TT^4}

Derksen discovered that this Schubert problem has Galois group $S_4$ and it was described by
Vakil~\cite{Va06b}.
An instance is given by four $4$-planes $K_1,\dotsc,K_4$ in general position in
$\C^8$. 
Its solutions are those $H\in\Gr(4,8)$ for which $\dim H\cap K_i\geq 2$ for 
$i=1,\dotsc,4$.

Consider the {\it auxiliary} problem $\sTh^4$ in $\Gr(2,8)$
given by $K_1,\dotsc,K_4$.
This asks for those $h\in\Gr(2,8)$ with $\dim h\cap K_i\geq 1$ for $i=1,\dotsc,4$.
There are four solutions $h_1,\dotsc,h_4$ to this problem, and its Galois group is 
the full symmetric group $S_4$.

Each of the 4-planes $\defcolor{H_{a,b}}:=h_a\oplus h_b$ 
will meet each $K_i$ in a 2-plane, and so they are solutions to the
original problem. 
In fact, they are the only solutions.
It follows that the Galois group of $\sTT^4=6$ is $S_4$ acting on the
pairs $\{h_a,h_b\}$.
This is an imprimitive permutation group as it preserves the partition
\[
   \{H_{12},H_{34}\}\ \sqcup\  
   \{H_{13},H_{24}\}\ \sqcup\  
   \{H_{14},H_{23}\}
\]
of the solutions.
We also see that $\calX^{(2)}$ has two components.
Exactly two sets $\calC_{(\emptyset,\emptyset,\emptyset,\emptyset),0}$ and 
$\calC_{(\{2\},\{2\},\{2\},\{2\}),2}$ have dimension $N(\blambda)$.

The structure of this problem shows that if the $K_i$ are real, then either two or
all six of the solutions will be real.
Indeed, if all four solutions $h_i$ to the auxiliary problem are real, than all six solutions
$H_{i,j}$ will also be real.
If however, two or four of the $h_i$ occur in complex conjugate pairs, then exactly two of
the $H_{i,j}$ will be real.

%
\subsection{The Schubert problem $\sTT^2\cdot\sIII\cdot\sTh\cdot\sI^2=4$.}

Let $\ell\in\Gr(2,8)$, $K_1,K_2,L_1,L_2\in\Gr(4,8)$, and $\Lambda\in\Gr(6,8)$ be general.
These give an instance of this Schubert problem,
 \begin{multline*}
  \Omega_{\ssTT}K_1\cap \Omega_{\ssTT}K_2 \cap \Omega_{\sIII}\Lambda\cap
  \Omega_{\sTh}\ell\cap \Omega_{\sI}L_1\cap \Omega_{\sI}L_2
   \ =\ \{ H\in\Gr(4,8)\,\mid\,\\ \dim H\cap K_i\geq 2\,,
   \dim H\cap\Lambda \geq 3\,,\ \dim H\cap\ell\geq 1\,,\ 
   \dim H\cap L_i\geq 1\,, \ \mbox{for }i=1,2\}\,.
 \end{multline*}
Any solution $H$ meets each of the 6-planes $\langle K_i,\ell\rangle$
in a 3-plane and therefore their four-dimensional intersection 
$\defcolor{M}:=\langle K_1,\ell\rangle\cap\langle K_2,\ell\rangle$ in a 2-plane, $h$.
Then $h$ must meet the 2-planes $\ell$, $M\cap K_1$, $M\cap K_2$, and $M\cap\Lambda$, so it is
a solution to the problem 
\[
  \Omega_{\sI}\ell  \,\cap\,
  \Omega_{\sI}(M\cap K_1)  \,\cap\,
  \Omega_{\sI}(M\cap K_2)  \,\cap\,
  \Omega_{\sI}(M\cap \Lambda)
\]
in $\Gr(2,M)$.
As the subspaces are in general position, this has two solutions $h_1$ and $h_2$.

Any solution $H$ to our original problem also meets each of the 2-planes
$\Lambda\cap K_1$ and $\Lambda\cap K_2$ in a 1-plane, and therefore
meets their span, $M'$, in a 2-plane, $m$.
As $M+M'=\C^8$, they are in direct sum and $H$ is the span of $m$ and one of the $h_i$.

Fix $i\in\{1,2\}$ and suppose that $h_i\subset H$.
Then $H$ meets each of $\langle h_i,L_j\rangle$ for $j=1,2$ in a 3-plane and therefore
$H$ meets each of the 2-planes $M'\cap\langle h_i,L_1\rangle$ and $M'\cap\langle
h_i,L_2\rangle$.
Thus $m=H\cap M'$ is a solution to the problem 
\[
  \Omega_{\sI}(\Lambda\cap K_1)  \,\cap\,
  \Omega_{\sI}(\Lambda\cap K_2)  \,\cap\,
  \Omega_{\sI}(M'\cap\langle h_i,L_1\rangle) \,\cap\,
  \Omega_{\sI}(M'\cap\langle h_i,L_2\rangle)
\]
in $\Gr(2,M')$.
This has two solutions, $m_{i,1}$ and $m_{i,2}$.

The four 4-planes
$\defcolor{H_{i,j}}:=\langle h_i,m_{i,j}\rangle$ for $i,j=1,2$ 
are the solutions to our Schubert problem.
Since each element of the Galois group 
either fixes $h_1$ and $h_2$ or it interchanges them, it preserves the partition 
\[
  \{H_{1,1},H_{1,2}\}\,\sqcup\,
  \{H_{2,1},H_{2,2}\}\,,
\]
and so it is imprimitive.
In fact, it is a subgroup of the dihedral group $D_4$ of symmetries
of the square whose diagonals are the partition.
We verified that it was the dihedral group $D_4$ by reducing modulo primes $p$ and 
computing the cycles type of the resulting Frobenius elements (this method is described
in~\cite[\S5.4]{MSJ}) and found elements of the Galois group with 
cycle types
\[ 
   (4)\,,\ 
   (2,2)\,,\ 
   (2,1,1)\,,\ \mbox{ and }\ (1,1,1,1)\,.
\]
As the only subgroup of $D_4\subset S_4$ having elements of these cycle types is $D_4$
itself, we conclude that the original Schubert problem had Galois group $D_4$.

It is an exercise to verify that the three problems
$\sTT^2\cdot\sIII\cdot\sThI\cdot\sI=4$, 
$\sTT^2\cdot\sTII\cdot\sTh\cdot\sI=4$, and 
$\sTT^2\cdot\sTII\cdot\sThI=4$, have nearly the same geometry behind their
solutions and also have Galois group $D_4$.
This last problem was mentioned by Billey and Vakil~\cite{BV}, who asked if its Galois group
was indeed $D_4$.

%
\subsection{The Schubert problem $\sIII^2\cdot\sTh^2\cdot\sI^4=4$}

An instance of this problem
is given by the choice of two 6-planes
$L_1,L_2$, two 2-planes $\ell_1,\ell_2$ and four 4-planes $K_1,\dotsc,K_4$, all in general
position. 
Solutions will be those $H\in\Gr(4,8)$ such that
 \begin{equation}\label{Eq:SP}
   \dim H\cap L_i\ \geq\ 3\,,\ 
   \dim H\cap \ell_i\ \geq\ 1\,,\ \mbox{and}\ 
   \dim H\cap K_j\ \geq\ 1\,,\ 
 \end{equation}
for $i=1,2$ and $j=1,\dotsc,4$.

Consider the first four conditions in~\eqref{Eq:SP}.
Let $\defcolor{\Lambda}:=\langle \ell_1,\ell_2\rangle$, the linear span of $\ell_1$ and
$\ell_2$, which is isomorphic to $\C^4$.
Then $\defcolor{h}:=H\cap\Lambda$ is two-dimensional.
If we set $\ell_3:=\Lambda\cap L_1$ and $\ell_4:=\Lambda\cap L_2$, then 
$\dim h\cap \ell_3=\dim h\cap \ell_4=1$, and so 
$h\in\Gr(2,\Lambda)\simeq \Gr(2,4)$ meets each of the four two-planes
$\ell_1,\dotsc,\ell_4$, so $h$ is a solution to the problem $\sI^4=2$ in $\Gr(2,\Lambda)$
given by $\ell_1,\dotsc,\ell_4$, and therefore there are
two solutions, $h_1$ and $h_2$.

Now set $\defcolor{\Lambda'}:=L_1\cap L_2$, which is four-dimensional, and fix
one of the solutions \defcolor{$h_a$} to the problem of the previous paragraph.
For each $j=1,\dotsc,4$, set $\defcolor{\mu_j}:=\langle h_a, K_j\rangle\cap\Lambda'$,
which is two-dimensional.
These four two-planes are in general position and therefore give an instance of $\sI^4=2$ in
$\Gr(2,\Lambda')$. 
Let \defcolor{$ m_{a,1}$} and \defcolor{$ m_{a,2}$} be the two solutions to this
problem, so that $\dim  m_{a,b}\cap\mu_j\geq 1$ for each $j$.

Then the four subspaces $\defcolor{H_{ab}}:=\langle h_a, m_{a,b}\rangle$ are solutions to
the original Schubert problem.
Indeed, since $\dim H_{ab}\cap\ell_j=1$ for $j=1,\dotsc,4$ and 
$ m_{a,b}\subset L_1\cap L_2$, we have $\dim H_{ab}\cap L_i=3$, and so $H_{ab}$
satisfies the first four conditions of~\eqref{Eq:SP}. 
Since $\Lambda\cap\Lambda'=\{0\}$, $h_a$ does not meet $\mu_j$ for $j=1,\dotsc,4$, so 
$\dim H_{ab}\cap\langle h_a,K_j\rangle=3$, which implies that $\dim H_{ab}\cap K_j\geq 1$,
and shows that $H_{ab}$ is a solution.

The Galois group preserves the partition 
$\{H_{11},H_{12}\}\sqcup\{H_{21},H_{22}\}$ and so it is imprimitive.
The same arguments as before show that $D_4$ is its Galois group.
It is also an exercise to show that each of the following eight Schubert problems has a
solution whose description is nearly identical to $\sIII^2\cdot\sTh^2\cdot\sI^4=4$,
and therefore has Galois group $D_4$.
\begin{center}
  $4\ =\ \sTII^2\cdot\sThI^2\ =\ 
   \sTII^2\cdot\sThI\cdot\sTh\cdot\sI\ =\ 
   \sTII\cdot\sIII\cdot\sThI^2\cdot\sI\ =\ 
   \sTII^2\cdot\sTh^2\cdot\sI^2$ \\
  $ =\ \sTII\cdot\sIII\cdot\sThI\cdot\sTh\cdot\sI^2\ =\ 
   \sIII^2\cdot\sThI^2\cdot\sI^2\ =\ 
  \sTII\cdot\sIII\cdot\sTh^2\cdot\sI^3\ =\ 
   \sIII^2\cdot\sThI\cdot\sTh\cdot\sI^3$\,.
\end{center}

\providecommand{\bysame}{\leavevmode\hbox to3em{\hrulefill}\thinspace}
\providecommand{\MR}{\relax\ifhmode\unskip\space\fi MR }
\providecommand{\MRhref}[2]{%
  \href{http://www.ams.org/mathscinet-getitem?mr=#1}{#2}
}
\providecommand{\href}[2]{#2}

\end{document}